\newcommand{\Ph}{\varphi}
\newcommand{\Dc}{\mathbf{D}_{\mathrm{cris}}}
\newcommand{\La}{\varLambda}
\newcommand{\Dd}{\mathbf{D}_{\mathrm {dR}}}
\newcommand{\Dst}{\mathbf{D}_{\mathrm{st}}}
\newcommand{\RG}{\mathbf{R}\Gamma}
\newcommand{\res}{\mathrm{res}}
\newcommand{\R}{\mathbf{R}}
\newcommand{\A}{\mathbf{A}}
\newcommand{\bD}{\mathbf{D}}
\newcommand{\cl}{\mathrm{cl}}
\newcommand{\Tot}{\mathrm{Tot}}
\newcommand{\Iw}{\mathrm{Iw}}
\newcommand{\Bd}{\mathbf{B}_{\mathrm{dR}}}
\newcommand{\Bc}{\mathbf{B}_{\mathrm{cris}}}
\newcommand{\Hom}{\mathrm{Hom}}
\newcommand{\Ext}{\mathrm{Ext}}
\newcommand{\F}{\mathrm{Fil}}
\newcommand{\g}{\gamma}
\newcommand{\Ind}{\mathrm{Ind}}
\newcommand{\Ddagrig}{\mathbf{D}^{\dagger}_{\mathrm{rig}}}
\newcommand{\iso}{\overset{\sim}{\rightarrow}}
\newcommand{\Rep}{\mathbf{Rep}}
\newcommand{\Gal}{\mathrm{Gal}}
\newcommand{\CR}{\mathcal{R}}
\newcommand{\CDcris}{\mathscr{D}_{\mathrm{cris}}}
\newcommand{\ep}{\varepsilon}
\newcommand{\Qp}{\mathbf Q_p}
\newcommand{\Zp}{\mathbf Z_p}
\newcommand{\Q}{\mathbf Q}
\newcommand{\CH}{\mathscr H}
\newcommand{\BrdA}{\widetilde{\mathbf B}_{\textrm{rig},A}^{\dagger}}
\newcommand{\BrdAr}{\widetilde{\mathbf B}_{\textrm{rig},A}^{\dagger, r}}
\newcommand{\BrdArp}{{\widetilde{\mathbf B}}_{\textrm{rig},A}^{\dagger, r^{1/p}}}
\newcommand{\CDst}{\mathcal D_{\textrm{st}}}
\newcommand{\gr}{\text{\rm gr}}
\newcommand{\CO}{\mathcal O}
\newcommand{\DdagrigA}{\bD^{\dagger}_{\textrm{rig},A}}
\newcommand\Fr{\textrm{Fr}}
\newcommand\dR{\textrm{dR}}
\newcommand\st{\textrm{st}}
\newcommand\cris{\textrm{cris}}
\newcommand\BQ{\mathbf Q}
\newcommand{\W}{\mathbf W}
\newcommand{\bM}{\mathbf M}
\newcommand\im{\mathrm{Im}}
\newcommand\cyc{\mathrm{cyc}}
\newcommand\SC{\mathbf R\Gamma}
\newtheorem*{mainconj}{Main Conjecture} 
\newtheorem*{definition}{Definition}
\newtheorem{mytheorem}{Theorem}
\newtheorem{myproposition}{Proposition}[]
\newtheorem*{leopoldt}{Weak Leopoldt Conjecture}
\newtheorem{mylemma}{Lemma}
\newtheorem*{conjecture}{Conjecture}
\begin{document}
\title{Selmer Complexes and  the $P$-adic Hodge theory}
\author{Denis Benois}
\address{Institut de Math\'ematiques,
Universit\'e de  Bordeaux, 351, cours de la Lib\'eration  33405
Talence, France} 
\email{denis.benois@math.u-bordeaux1.fr}

%
%
\maketitle
\tableofcontents

\section*{Introduction}

\subsection{The Selmer group} In this paper we discuss some applications
of   $p$-adic Hodge theory to
the algebraic formalism of Iwasawa theory.
Fix an odd prime number $p.$
For every finite set $S$ of primes containing $S$ we denote by ${\BQ}^{(S)}/\BQ$
the maximal algebraic extension of $\Q$ unramified outside $S\cup\{\infty\}$ and
set $G_{\BQ,S}=\Gal ( {\BQ}^{(S)}/\BQ).$  Let $G_{\BQ_v}=\Gal (\overline{\BQ}_v/\BQ_v)$ 
denote the  decomposition group
at $v$ and $I_v$ its inertia subgroup. 

Let $M$ be a pure motive over $\BQ.$
The complex $L$-function $L(M,s)$ is a Dirichlet series which converges for $s\gg 0$
and is expected to admit a meromorphic continuation on the whole $\mathbf C$
with a functional equation of the form 
\[
\Gamma (M,s)L(M,s)=\varepsilon (M,s)\Gamma (M^*(1),-s)L(M^*(1),-s).
\]
where $\Gamma (M,s)$ is a product of  $\Gamma$-factors explicitly defined in terms of the Hodge structure
of $M$ and $\varepsilon (V,s)$ is a factor of the form $\ep (M,s)=ab^s$ 
defined in terms of local constants (see for example \cite{FP}).

Let $V$ denote the $p$-adic realisation of $M.$
We consider $V$ as a $p$-adic representation of the Galois group $G_{\BQ,S}$
where $S$ contains $p,$ $\infty$ and the  places where $M$ has  bad reduction.
We will write $H^*_S(\BQ,V)$ and $H^*(\BQ_v,V)$ for the continuous cohomology 
of $G_{\BQ,S}$  and $G_{\Q_v}$ respectively.  
 The Bloch--Kato Selmer group $H^1_f(\Q,V)$ is defined as
\begin{equation}
\label{definition of Bloch-Kato Selmer}
H^1_f(\BQ,V)=\ker \left (H^1_S(\BQ,V)\rightarrow \underset{v\in S}{\bigoplus}
\frac{H^1(\BQ_v,V)}{H^1_f(\BQ_v,V)}\right )
\end{equation}
where the "local conditions" $H^1_f(\BQ_v,V)$ are given by 
\begin{equation}
\label{Bloch-Kato local conditions}
H^1_f(\BQ_v,V)=\begin{cases} 
\ker (H^1(\BQ_v,V)\rightarrow H^1(I_v,V))  &\text{\rm if $v\neq  p$},
\\
\ker (H^1(\BQ_p,V) \rightarrow H^1(\BQ_p,V\otimes\Bc))  &\text{\rm if $v= p$} 
\end{cases}
\end{equation}
(see \cite{BK}).
Here  $\Bc$ is  the ring of crystalline periods of Fontaine \cite{Fo94a}.
Beilinson's conjecture (in the formulation of Bloch and Kato \cite{BK}) relates 
the rank of
$H^1_f(\BQ,V^*(1))$  to the order of vanishing of $L(M,s)$, namely
one expects that
\[
\text{\rm ord}_{s=0}L(M,s)=\dim_{\Qp}H^1_f(\Q,V^*(1))-
\dim_{\Qp}H^0(\Q,V^*(1)).
\]

\subsection{$p$-adic $L$-functions}
 Assume that $V$ satisfies the following conditions (see Section 2.1 below).

{\bf C1)} $H^0_S(\Q,V)=H^0_S(\Q,V^*(1))=0$.

{\bf C2)} The restriction of $V$ on the decomposition group at $p$ is semistable in the sense
of Fontaine \cite{Fo94b}. 
We denote by  $\Dst (V)$ the semistable module associated to $V.$ Recall that $\Dst (V)$ is a filtered
$\Qp$-vector space equipped with a linear Frobenius 
$\Ph \,:\,\Dst (V)\rightarrow \Dst (V)$ and a monodromy operator
$N\,:\,\Dst (V)\rightarrow \Dst (V)$ such that $N\Ph=p\Ph N.$ Let $\Dc (V)=\Dst (V)^{N=0}.$

{\bf C3)} $\Dc (V)^{\Ph=1}=0.$
\newline
\,

We define the notion of a regular submodule of $\Dst (V)$ which plays the key role
in this paper. It was first introduced by Perrin-Riou  \cite{PR95} in the crystalline
case. See also \cite{Ben11} and \cite{Ben14}. 
Eventually replacing $M$ by $M^*(1)$ we will assume that 
$M$ is of weight $wt (M)\leqslant -1.$
We consider two cases:
\newline
\,

{\textbf{The weight $\leqslant -2$ case.}} Let $wt (M)\leqslant -2.$ We expect that in
this case the localization map
\[
H^1_f(\Q,V) \rightarrow H^1_f(\Qp,V)
\]
is injective. The condition {\bf C3)} implies that the Bloch--Kato exponential map
\[
\exp_V\,:\,\Dst (V)/\F^0\Dst (V)\rightarrow H^1_f(\Qp,V)
\]
 is an isomorphism and we denote
by $\log_{V}\,:\,H^1_f(\Qp,V)\rightarrow  \Dst (V)/\F^0\Dst (V)$ its inverse. 
A $(\Ph,N)$-submodule $D$  of 
$\Dst (V)$ is regular if $D\cap \F^0\Dst (V)=\{0\}$ and the composition of the localisation map with $\log_V$ induces an isomorphism
\[
r_{V,D}\,:\,H^1_f(\Q,V) \rightarrow \Dst (V)/(\F^0\Dst (V)+D).
\]
Note that the map  $r_{V,D}$ should be closely related to the syntomic regulator.
\newline
\,

{\bf The weight $-1$ case.} Let $wt (M)=-1.$ In this case we say that 
 a $(\Ph,N)$-submodule
$D$ of $\Dst (V)$ is regular if 
\[
\Dst (V)=D\oplus \F^0\Dst (V) 
\]
as vector spaces. Each regular $D$ gives  a splittings of the Hodge filtration
on $\Dst (V)$ and therefore  defines a $p$-adic height pairing  
\[
\left <\,,\,\right >_{V,D}\,:\,H^1_f(\Q,V)\times H^1_f(\Q,V^*(1)) \rightarrow \Qp
\]
which is expected to be non-degenerate \cite{Ne92}, \cite{NeSC}, \cite{PR92}.

The theory of Perrin-Riou \cite{PR95} suggests that to each regular $D$ 
one can associate a $p$-adic $L$-function $L_p(M,D,s)$  interpolating rational parts of 
special values $L(M,s).$  The interpolation formula relating special values 
of the complex and the $p$-adic $L$-functions at $s=0$ should have the form
\[
\frac{L^*_p(M,D,0)}{\Omega_p(M,D)}=\mathcal E(V,D)\frac{L^*(M,0)}{\Omega_{\infty}(M)}.
\]
Here $\Omega_{\infty}(M)$ is the irrational part of $L^*(M,0)$ predicted by Beilinson--
\linebreak
Deligne's
conjectures and $\Omega_p(M,D)$ is essentially the  determinant of $r_{V,D}$ 
(in the weight $\leqslant -2$ case) or the determinant of $\left <\,,\,\right >_{V,D}$
(in the weight $-1$ case). Finally $\mathcal E(V,D)$ is an Euler-like factor. 
Its explicit form in the crystalline case was conjectured by Perrin-Riou \cite{PR95}, Chapitre 4
(see also \cite{Cz00} Conjecture 2.7).
Namely,  let $E_p(M,X)=\det (1-\Ph X\vert \Dc (V)).$
Note that  $E_p(M,p^{-s})$ is  the Euler factor of $L(M,s)$ at $p.$
One expects that 
\[
\mathcal E(V,D)=E_p(M,1)\,\det \left (\frac{1-p^{-1}\Ph^{-1}}{1-\Ph} \left \vert \right. D\right )
\qquad \text{\rm if $V$ is crystalline at $p$.}
\] 

\subsection{Selmer complexes}  We introduce basic notation of the Iwasawa theory. Let $\mu_{p^n}$ denote the group
of $p^n$-th roots of unity and $\Q^{\cyc}=\underset{n=1}{\overset\infty
{\cup}} \Q(\mu_{p^n}).$ We fix a system $(\zeta_{p^n})_{n\geqslant 0}$ of primitive $p^n$th
roots of unity such that $\zeta_{p^n+1}^p=\zeta_{p^n}$ for all $n.$
The Galois group $\Gamma =\Gal (\Q^{\cyc}/\Q)$ is canonically isomorphic 
to $\Zp^*$ {\it via} the cyclotomic character
\[
\chi\,:\,\Gamma \rightarrow \Zp^* ,\qquad g (\zeta_{p^n})=\zeta_{p^n}^{\chi (g)}\qquad
\text{\rm for all $g\in \Gamma$}.
\]
Let $\Delta =\Gal (\BQ (\mu_p)/\Q)$ and 
 $\Gamma_0=\Gal (\Q^{\cyc} /\BQ (\mu_p)).$ 
We denote by $F_{\infty}=\Q (\mu_{p^\infty})^{\Delta}$ the cyclotomic $\Zp$-extension
of $\Q$ and set  $F_n=\Q(\mu_{p^{n+1}})^{\Delta}, $ $n\in \mathbf N.$
Fix a generator $\g\in \Gamma$ and
set $\g_0=\g^{p-1}\in \Gamma_0.$ Let $\Lambda$ denote  the Iwasawa algebra $\Zp[[\Gamma_0]].$ 
The choice of $\g$ fixes an isomorphism $\Lambda\simeq \Zp[[X]]$ such that
$\g_0\mapsto 1+X.$  
Let $\CH$ denote the ring  of power series that converge on the open unit disc.
We consider $\Lambda$ as a subring of $\CH$ {\it via} the isomorphism $\Lambda \simeq \Zp[[X]].$
Note that $\CH$ is the large 
Iwasawa algebra  introduced in \cite{PR94}.

Let $T$ be $\Zp$-lattice of $V$ stable under the action of $G_{\Q,S}.$ 
Let  $T\otimes \Lambda^{\iota}$ denote the tensor product  $T\otimes \Lambda$ equipped
with the diagonal action of $G_{\Q,S}$ and the action of $\Lambda$ given by
the canonical involution $\iota \,:\,\Lambda \rightarrow \Lambda$
\[
g (a\otimes \lambda)=a\otimes \iota (g)(\lambda) =a\otimes g^{-1}\lambda,\qquad g\in \Gamma.
\]
Let $C^{\bullet}(G_{\Q,S}, T\otimes \Lambda^{\iota})$ (respectively  $C^{\bullet}(G_{\BQ_v}, T\otimes \Lambda^{\iota})$)
be the complex of continuous cochains
of $G_{\Q,S}$ (respectively $G_{\BQ_v}$) with coefficients in $T\otimes \Lambda^{\iota}.$
We denote by $\RG_{\Iw,S}(\Q,T)$ and $\RG_{\Iw,S}(\Q_v,T)$  these
complexes viewed as objects of  the bounded derived category
$D^{\mathrm b}(\La)$ of $\Lambda$-modules. Set 
\linebreak
$H^i_{\Iw,S}(\Q,T)=\R^i\Gamma_{\Iw,S}(\Q,T)$ and $H^i_{\Iw}(\Q_v,T)=\R^i\Gamma_{\Iw}(\Q_v,T).$
From 
\linebreak
Shapiro's lemma it follows that
$$
H^i_{\Iw,S}(\Q,T)=\underset{n}{\varprojlim} H^i_S(F_n,T), \qquad
H^i_{\Iw}(\Q_v,T)=\underset{n}{\varprojlim} H^i_S(F_{n,v},T)
$$
as $\Lambda$-modules. In \cite{NeSC} Nekov\'a\v r considers  diagrams of the form

\begin{equation}
\xymatrix{
\RG_{\Iw,S}(\Q,T) \ar[r] &\underset{v\in S}\bigoplus
\RG_{\Iw}(\Q_v,T)\\
&\underset{v\in S}\bigoplus U^{\bullet}_v(T). \ar[u]} 
\end{equation}
The cone of this diagram can be viewed as the derived version of Selmer groups 
where the complexes  $U^{\bullet}_v(T)$ play the role of local conditions. 
For $v\neq p$ the natural choice of $U^{\bullet}_v(T)$ is to put 
$
U^{\bullet}_v(T)=\RG_{\Iw,f}(\Q_v,T)
$ 
where 
\[
\RG_{\Iw,f}(\Q_v,T)=\left [T^{I_v}\otimes \Lambda^{\iota}\xrightarrow{\mathrm{Fr}_v-1}
T^{I_v}\otimes \Lambda^{\iota}\right ]
\] 
where $\text{\rm Fr}_v$ denotes the geometric Frobenius and the terms are placed in degrees $0$ and $1.$ Note that the cohomology groups of 
\[
\RG_{\Iw,f}(\Q_v,T)\otimes^{\mathbf L}_{\Lambda}\Qp=
\left [V^{I_v} \xrightarrow{\mathrm{Fr}_v-1}
V^{I_v}\right ]
\]
are $H^0(\Q_v,V)$ and $H^1_f(\Q_v,V).$  The local conditions
at $p$ are more delicate to define.  
First assume that $V$ satisfies the Panchishkin condition i.e. the restriction 
of $V$ on the decomposition group at $p$  has a  subrepresentation $F^+V\subset V$ such that
\[
\Dd (V)=\Dd (F^+V)\oplus   
\F^0\Dd (V)   
\]
as vector spaces.  Set $U^{\bullet}_p (T)=\RG_{\Iw}(\Qp,F^+T).$
Then $U^{\bullet}_p(T)$ is the derived version of Greenberg's local conditions
\cite{Gr89} and the cohomology of Nekov\'a\v r's Selmer complex is closely
related to the Pontriagin dual of  Greenberg's Selmer group \cite{NeSC}. If we assume in addition that 
\linebreak
$\Dc (V^*(1))^{\Ph=1}=0$ then 
$H^1_f(\Qp,V)=\ker (H^1(\Qp,V)\rightarrow H^1(I_p,V/F^+V))$
and  one can compare Greenberg's and Bloch--Kato's Selmer groups.
\linebreak
Roughly speaking, in this case different definitions lead to 
pseudo-
\linebreak
isomorphic $\Lambda$-modules which have therefore the same
characteristic series in the case they are $\Lambda$-torsion (see \cite{Fl90}, \cite{NeSC}, Chapter 9 and \cite{Och}).
If $\Dc (V^*(1))^{\Ph=1}\neq 0$ the situation is more complicated. 
The analytic counterpart of this problem is the phenomenon
of extra zeros of $p$-adic $L$-
\linebreak
functions studied in \cite{Gr94}, \cite{Ben11},  \cite{Ben14}.  

We no longer assume that $V$ satisfies the Panchishkin condition.
The theory of $(\Ph,\Gamma)$-modules 
(see \cite{Fo90}, \cite{CC2}, \cite{Ber02}) associates to $V$ a 
finitely generated free module $\Ddagrig (V)$ over the Robba ring $\CR$
equipped with
a semilinear actions of the group $\Gamma$ and a Frobenius $\Ph$ which commute with each other.  
The category of $(\Ph,\Gamma)$-modules has a nice cohomology theory whose formal properties  are very close to 
properties of  local Galois cohomology \cite{H1}, \cite{H2},
\cite{Li}. In particular, $H^*(\Ddagrig (V))$ is canonically isomorphic
\footnote{Up to the choice of a generator of $\Gamma$.}
to the continuous Galois cohomology $H^*(\Qp,V).$ Moreover, to each $(\Ph,\Gamma)$-module
$\bD$ one can associate the complex
\[
\RG_{\Iw}(\bD)=\left [\bD \xrightarrow{\psi-1}\bD \right ]^{\Delta=0}
\]
where $\psi$ is the left inverse to $\Ph$ (see \cite{CC1})  and the first term is placed
in degree $1$.  We will write  $H_{\Iw}^*(\bD)$ for the cohomology of $\RG_{\Iw}(\bD).$
 The action of $\Gamma_1$ induces 
a natural structure of $\CH$-module  on $\bD^{\Delta=0}.$  From a general result of Pottharst 
( \cite{PoAF}, Theorem 2.8)
it follows that 
\[
H^i_{\Iw}(\Qp,T)\otimes_{\Lambda}\CH  \iso H^i_{\Iw}(\Ddagrig (V))
\]
as $\CH$-modules. 

The approach to Iwasawa theory discussed in this paper is based on the observation
that the $(\Ph,\Gamma)$-module of a semistable representation looks like
an ordinary Galois representation, in particular it is a 
successive extension of $(\Ph,\Gamma)$-modules of rank $1.$
This was first pointed out by Colmez  in \cite{Cz08} where  
the structure of trianguline  $(\Ph,\Gamma)$-modules of rank $2$ over $\Qp$
was studied in detail. Therefore in the non ordinary setting
we can  adopt Greenberg's approach  working with $(\Ph,\Gamma)$-modules instead
$p$-adic representations. This idea was used in \cite{BC} and \cite{Bel}  
to study Selmer groups in families and in \cite{Ben11}, \cite{Ben13},
\cite{Ben14} to study
extra-zeros of $p$-adic $L$-functions. Pottharst \cite{Po13} started the
general theory of Selmer complexes in this setting and  related it
to Perrin-Riou's theory \cite{PoCIT}.

\subsection{The Main Conjecture} Fix  a regular submodule  $D$ of $\Dst (V).$ By \cite{Ber08} one can associate 
to $D$  a canonical  $(\Ph,\Gamma)$-submodule $\bD$  of $\Ddagrig (V).$ 
Consider the   diagram 
\[
\xymatrix{
\RG_{\Iw,S}(\Q,T)\otimes^{\mathbf L}_{\Lambda}\CH \ar[r] &\underset{v\in S}\bigoplus
\RG_{\Iw}(\Q_v,T)\otimes^{\mathbf L}_{\Lambda}\CH\\
&\underset{v\in S}\bigoplus U^{\bullet}_v(V,D) \ar[u]}
\]
in the derived category
of $\CH$-modules,
where the local conditions $U^{\bullet}_v(V,D)$ are
\[
U^{\bullet}_v(V,D)=\begin{cases} \RG_{\Iw,f}(\Q_v,T)\otimes^{\mathbf L}_{\Lambda}\CH
&\text{\rm if $v\neq \,p$}\\
\RG_{\Iw}(\bD) &\text{\rm if $v=p.$}
\end{cases}
\]

Consider the Selmer complex associated to this data
\begin{multline}
\SC_{\Iw}(V,D)=\mathrm{cone}\,\left (\left (
\RG_{\Iw,f}(\Q_v,T)\otimes^{\mathbf  L}_{\Lambda}\CH\right )\bigoplus 
\left (\underset{v\in S}{\bigoplus} U^{\bullet}_v(V,D)\right ) \rightarrow  \right.\\
\left. \underset{v\in S}\bigoplus
\RG_{\Iw}(\Q_v,T)\otimes^{\mathbf  L}_{\Lambda}\CH
\right )[-1].
\end{multline}

To simplify notation define
\[
H_{\Iw}^i (V,D)=\mathbf{R}^i\Gamma_{\Iw} (V,D).
\]

We propose the following conjecture.

\begin{mainconj} Let $M/\Q$ be a pure motive of weight $\leqslant -1$
which does not contain submotives of the form $\Q (m).$
Assume that the $p$-adic realisation $V$ of $M$ satisfies the conditions {\bf C1-3)} above.  Let $D$ be a regular submodule of $\Dst (V).$ Then 

i) $H_{\Iw}^i (V,D)=0$ for $i\neq 2.$

ii) $H_{\Iw}^2 (V,D)$ is a coadmissible
\footnote{See Section 1.6 for the definition of a coadmissible module}
 torsion $\CH$-module and 
\[
\mathrm{char}_{\CH} \left(H_{\Iw}^2 (V,D)\right )= (f_{D})
\]
where $L_p(M,D,s)=f_{D}(\chi (\g_0)^s-1).$
\end{mainconj}

\noindent 
{\bf Remarks} 1) Since  $\CH^*=\La [1/p]^*,$ 
our Main Conjecture determines 
$f_D$ up to multiplication by a unit in $\La [1/p].$

2) Assume that $M$ is critical and that $V$
is ordinary at $p$. Then the ordinarity filtration $(F^iV)_{i\in \mathbf Z}$ provides the canonical 
regular module $D=\Dst (F^1V).$ 
It is expected that in this situation $f_D\in \Lambda.$
In \cite{Gr89}, Greenberg   defined 
a cofinitely generated $\Lambda$-module (Greenberg's  
Selmer group)
\linebreak  
$S (F_\infty,V^*(1)/T^*(1))$ and conjectured that
its Pontriagin dual 
\linebreak
$S (F_\infty,V^*(1)/T^*(1))^{\wedge}$ is 
related to the $p$-adic function $L_p(V,D,s)$ as follows
\[
\mathrm{char}_{\La}S (F_\infty,V^*(1)/T^*(1))^{\wedge}=f_{D}\La
\]
Using the results of \cite{NeSC} one can check that 
this agrees with our Main conjecture, but  Greenberg's conjecture
is more precise because it  determines
$f_{D}$ up to multiplication by a unit in $\La.$ See \cite{PoCIT} and
Section 2.5 for more detail. 

3) Assume that $V$ is crystalline at $p$. In \cite{PR95}, 
for any regular $D$ Perrin-Riou defined a free $\Lambda$-module
$\mathbf L_{\Iw}(D,V)$ together with a canonical trivialisation
\[
i_{V,D}\,\,:\,\,\mathbf L_{\Iw}(D,V) \overset{\sim}{\rightarrow} 
\CH
\]
See also \cite{Ben14} for the interpretation of Perrin-Riou's theory in terms  of Selmer complexes. The Main Conjecture in the formulation of
Perrin--Riou says that
\[
i_{V,D} (\mathbf L_{\Iw}(D,V))=\Lambda f_D. 
\]
In \cite{PoCIT}, Pottharst proved that
\[i_{V,D} (\mathbf L_{\Iw}(D,V))\otimes_{\Zp}\Qp=
\mathrm{char}_{\CH}H_{\Iw}^2 (V,D) 
\]
i.e. for crystalline representations  our conjecture is compatible
with Perrin-Riou's theory.

4) Let $f=\underset{n=1}{\overset{\infty}\sum}a_nq^n$ be a newform 
of weight $k$ 
on $\Gamma_0(N)$  
and let $W_f$ be the $p$-adic representation associated to $f.$
Then $W_f$ can be seen as the $p$-adic realisation of a pure motive 
$M_f$ of weight $k-1.$ The representation $W_f$ is crystalline 
if $(p,N)=1$ and semistable non crystalline if $p\parallel N.$
Fix  $1\leqslant m\leqslant k-1.$ Then the motive $M_f(m)$ is 
critical  and we denote by
 $V=W_f(m)$  its $p$-adic realisation. 
The subspace $\F^0\Dst (V)$ is one dimensional.
Eventually extending scalars to some $E/\Qp$
 we can write
$\Dst (V)=Ee_{\alpha}+Ee_{\beta}$ where $\Ph (e_{\alpha})=\alpha e_{\alpha}$
and $\Ph (e_{\beta})=\beta e_{\beta}.$ If $(p,N)=1$ 
the subspaces $D_{\alpha}=Ee_{\alpha}$ and $D_{\beta}=Ee_{\alpha}$
are regular and the corresponding $p$-adic $L$-functions
$L(V,D_{\alpha},s)$ and $L(V,D_{\beta},s)$ are morally the usual 
$p$-adic $L$-functions associated to $\alpha$ and $\beta$.
If $p\parallel N,$ we have $Ne_{\beta}=e_{\alpha}$ and $Ne_{\alpha}=0$
and $D_{\alpha}$ is the unique regular subspace of $\Dst (V).$ 
The results of Kato \cite{Ka} have the following interpretation 
in terms of our theory. Assume that $(p,N)=1$ and set
$L(V,D_{\alpha},s)=f_{\alpha}(\chi (\g_0)^s-1).$ Then 
$H_{\Iw}^2(V,D_{\alpha})$ is $\CH$-torsion and 
\[
f_{\alpha}\in \mathrm{char}_{\CH}H_{\Iw}^2(V,D_{\alpha})
\]
(see \cite{PoCIT}, Theorem 5.4). In the ordinary case the opposite 
inclusion was recently proved under some
technical conditions by Skinner and Urban \cite{SU}. It would be interesting 
to understand whether those method generalises to the non ordinary case.

5) It is certainly possible to formulate the Main Conjecture 
for families of $p$-adic representations in the spirit of \cite{Gr94b}.

\subsection{The plan of the paper} The first part of the paper is written as a survey article. In Section 1 we review 
the theory of $(\Ph,\Gamma)$-modules. 
In particular, we discuss  recent results of Kedlaya, Pottharst  and
Xiao \cite{KPX} about the cohomology of $(\Ph,\Gamma)$-modules in families
and its applications to the  Iwasawa cohomology. 
In Section 2 we apply this theory to the global Iwasawa theory.
The notion of a regular submodule is discussed 
in Section 2.1. 
In Sections 2.2-2.4 we construct 
the complex $\RG_{\Iw}(V,D)$ and review its basic properties following
Pottharst \cite{PoCIT}. 

The Main conjecture is formulated in Section 2.5.
In the rest of the paper we discuss the relationship between the nullity  
of $H^1_{\Iw}(V,D)$ and the structure of the semistable module $\Dst (V).$
In Section 3 we consider the weight $\leqslant -2$ case and prove that  $H^1_{\Iw}(V,D)=0$  (and therefore  $H^2_{\Iw}(V,D)$ is $\CH$-torsion) 
if the $\ell$-invariant $\ell (V,D)$ constructed in \cite{Ben11}, \cite{Ben14}
does not vanish. In Section 4 we consider the weight $-1$ case and prove 
that the nullity of  $H^1_{\Iw}(V,D)$ follows from the non degeneracy of the 
$p$-adic height pairing. The proof is based on the generalisation 
of Nekov\'a\v r's construction of the $p$-adic  height pairing \cite{NeSC}
to the non ordinary case. A systematic study of $p$-adic heights 
using $(\Ph,\Gamma)$-modules    will be done in  \cite{Ben14b}.

\section{$(\Ph,\Gamma)$-modules}

\subsection{ $(\Ph,\Gamma)$-modules} In this  section we summarize
the results about $(\Ph,\Gamma)$-modules which will be used in 
subsequent Sections.
The notion of a $(\Ph,\Gamma)$-module was introduced by Fontaine in the  fundamental
paper \cite{Fo90}. We consider only $(\Ph,\Gamma)$-modules over $\Qp$ and those families 
 because this is sufficient  for applications we have in mind and refer to original papers for 
the general case.

Let  $p$ be  a prime number.  Fix an algebraic closure  $\overline{ \Q}_p$
of $\Qp$ and set $G_{\Qp}=\Gal (\overline{\Q}_p/\Qp).$ 
Let $\Qp^{\cyc}=\Qp (\mu_{p^{\infty}})$ 
and  $\Gamma =\Gal (\Qp^{\cyc}/\Qp).$ 
The cyclotomic character $\chi \,:\,G_{\Qp} \rightarrow  \Zp^*$ induces an isomorphism
of $\Gamma $ onto $\Zp^*$ which we denote again by $\chi \,:\,\Gamma \rightarrow \Zp^*.$
Let  $\mathbf C_p$ 
 be the  $p$-adic completion of $\overline{\Q}_p.$ We denote by  $\vert \cdot \vert_p$ 
the $p$-adic absolute value on $\mathbf C_p$ normalized by $\vert p\vert_p=1/p.$
We fix a coefficient field $E$ which will be a finite extension of $\Qp$
and consider the following objects:

$\bullet$ The field $\mathscr E_E$ of power series 
$f(X)=\displaystyle\underset{k\in \mathbf Z}\sum a_kX^k,\quad a_k\in E$ such that $a_k$ are $p$-adically bounded and $a_k\to 0$ when $k\to -\infty.$ Thus $\mathscr E_E$ is a complete discrete valuation field with residue field
$k_E((X))$ where $k_E$ is the residue field of $E.$

$\bullet$ For each $0 \leqslant r<1$  the ring $\CR_E^{(r)}$
of  $p$-adic functions 
\begin{equation}
\label{eq:2}
f(X)=\underset{k\in \mathbf Z}\sum a_kX^k,\qquad a_k\in E  
\end{equation}
which are  holomorphic on the $p$-adic annulus
\[
A (r,1)=\{ z\in \mathbf C_p \mid  p^{-1/r}\leqslant \vert z\vert_p  <1\}.
\]
The Robba ring of power series with coefficients in $E$ is defined as 
\[
\CR_E=\underset{r}\cup \CR_E^{(r)}.
\]
Note that $\CR_E$ is a B\'ezout ring
(each finitely generated ideal is principal) \cite{La62} 
but it is not noetherian.
Its group of units coincides with the group of units of 
$\mathscr O_E[[X]]\otimes \Qp$ where $\mathscr O_E$ is the ring of integers of $E.$

$\bullet$  For each $0 \leqslant r<1$ the ring  $\mathscr E_E^{\dagger,r}$ of $p$-adic functions \eqref{eq:2} that
are bounded on $A(r,1).$ Then  $\mathscr E_E^{\dagger}= \underset{r}\cup \mathscr E_E^{\dagger,r}$ is a field  which is contained both in $\mathscr E_E$ and $\CR_E.$
Its elements are called overconvergent power series.

The rings  $\mathscr E_E$, $\mathscr E_E^{\dagger}$ and $\CR_E$
 are equipped  with  an $E$-linear  continuous  action of $\Gamma$  
defined by
\[
g (f(X)) =f((1+X)^{\chi (g)}-1),\qquad g \in \Gamma 
\]
and a linear  operator $\Ph$ called the Frobenius and given by
\[
\Ph (f(X))= f((1+X)^p-1).
\]
Note that the actions of $\Gamma$ and $\Ph$ commute with each other. 
 Set $t=\log (1+X)=\displaystyle \underset{n=1}{\overset \infty \sum} (-1)^{n-1}\frac{X^n}{n}.$ 
Then $t\in \CR$ and   $\gamma (t)=\chi (\gamma)\,t,$ $\Ph (t)=p\,t.$

\begin{definition} i)  A $(\Ph,\Gamma)$-module over $R=\mathscr E_E,\,\,\mathscr E_E^{\dagger}$ or $\CR_E$  is a finitely generated free $R$-module 
$\mathbf D$  equipped
with commuting semilinear actions of $\Gamma$ and $\Ph$ and such that 
$A\Ph (\bD)=\bD$. The last condition  means simply that $\Ph (e_1),\ldots ,\Ph (e_d)$ is a basis
of $\bD$ if $e_1,\ldots ,e_d$ is.
\end{definition}
We denote by $\mathbf M_{A}^{\Ph,\Gamma}$ the category of $(\Ph,\Gamma)$-modules
over $A$.
The category   $\mathbf M_{\mathscr E_E}^{\Ph,\Gamma}$ 
contains the important subcategory
$\mathbf M_{\mathscr E_E}^{\Ph,\Gamma,\text{\rm \'et} }$
of \'etale modules. A $(\Ph,\Gamma)$-module $\bD$ over $\mathscr E_E$ is
\'etale if it is isoclinic of slope $0$ in the sense of Dieudonn\'e-Manin's theory.
More explicitly, $\bD$ is \'etale if there exists a $(\Ph,\Gamma)$-stable lattice 
$\mathscr O_{\mathscr E_E}e_1+\cdots  +\mathscr O_{\mathscr E_E}e_d$ of $\bD$ over the ring
of integers  $\mathscr O_{\mathscr E_E}$ of $\mathscr E_E$ 
such that the matrix of $\Ph$ in the basis $\{e_1,\ldots ,e_d\}$ is invertible
over $\mathscr O_{\mathscr E_E}.$ The category   
$\mathbf M_{\mathscr E_E^{\dagger}}^{\Ph,\Gamma,\text{\rm \'et}}$  of 
\'etale modules over $\mathscr E_E^{\dagger}$ can be defined by the same 
manner.

Let $\Rep_{E}(G_{\Qp})$ denote the $\otimes$-category  of $p$-adic representations  
of the Galois group $G_{\Qp}=\Gal (\overline{\Q}_p/\Qp)$ 
on finite dimensional $E$-vector spaces. Using the field-of-norms functor  of Fontaine-Wintenberger  \cite{W} Fontaine constructed an equivalence of $\otimes$-categories
\[
\bD \,\,:\,\,\Rep_{E}(G_{\Qp})\rightarrow 
\mathbf M_{\mathscr E_E}^{\Ph,\Gamma,\text{\rm \'et} }
\]
and conjectured that each $p$-adic representation $V$  is overconvergent i.e.
$\bD (V)$ has a canonical $\mathscr E_E^{\dagger}$-lattice $\bD^{\dagger} (V)$ 
stable under the actions of $\Ph$ and $\Gamma.$ This was proved by Cherbonnier 
and Colmez in \cite{CC1}.

The relationship between $p$-adic representations
and $(\Ph,\Gamma)$-modules over the Robba ring  can be summarized in the following diagram
\begin{displaymath}
\xymatrix{
\Rep_E(G_{\Qp}) \ar[rr]^{\bD^{\dagger}} 
\ar[ddrr]^{\Ddagrig} &
&\mathbf M_{\mathscr E_E^{\dagger}}^{\Ph,\Gamma,\text{\rm \'et}}
\ar[dd]^{\otimes_{\mathscr E_E^{\dagger}} \CR_E}\\
&  &\\
& &\mathbf M_{\CR_E}^{\Ph,\Gamma}. 
}
\end{displaymath}
A striking fact is that the vertical arrow is a fully faithful functor. 
This follows from Kedlaya's generalization of Dieudonn\'e-Manin theory 
\cite{Ke}. More precisely,the functor  $\bD \mapsto \bD\otimes_{\mathscr E_E^{\dagger}} \CR_E$ establishes
an equivalence between $\mathbf M_{\mathscr E_E^{\dagger}}^{\Ph,\Gamma,\text{\rm \'et}}$
and the category of $(\Ph,\Gamma)$-modules over $\CR_E$ of slope $0$ in the sense
of Kedlaya. See \cite{Cz08}, Proposition 1.7 for details.

\subsection{Cohomology of $(\Ph,\Gamma)$-modules} 
On the categories of $(\Ph,\Gamma)$-modules one can define  
cohomology theories whose formal properties are very similar to 
properties of continuous Galois cohomology of local fields. The main idea is 
particulary clear if we work with the  category of \'etale modules. 
Since $\mathbf M_{\mathscr E_E}^{\Ph,\Gamma,\text{\rm \'et} }$ is equivalent 
to $\Rep_E(G_{\Qp})$ it is possible to compute  continuous
Galois cohomology $H^*(\Qp,V)$ in terms of $\bD (V).$ In practice,
since the category of $p$-adic representations has no enough of injective
objects one should first work modulo $p^m$-torsion and consider the category of inductive limits of $(\Ph,\Gamma)$-modules over  $\mathscr O_{\mathscr E_E}/p^m.$ 
This leads to the following results \cite{H1}, \cite{H2}.
Let $\Delta =\Gal (\Qp(\mu_p)/\Qp).$
Then $\Gamma\simeq \Delta\times \Gamma_0$ where $\Gamma_0$ is a procyclic $p$-group.
Fix a topological  generator $\g_0$ of $\Gamma_0$ and set $\g_n=\g_1^{n}.$ $n\geqslant 0.$
Let  $K_n=\Qp(\mu_{p^n})^{\Delta},$
and 
$K_\infty=\underset{n\geqslant 0}\cup K_n.$ Then $\Gamma_0=\Gal (K_\infty/\Qp).$
If $M$ is a topological module equipped with a continuous action of $\Gamma$ and 
an operator $\Ph$ which commute to each other we denote by $C^{\bullet}_{\Ph,\gamma_n}(M)$
the complex 
\begin{equation}
\label{eq:4}
C^{\bullet}_{\Ph,\gamma_n}(M)\,\,:\,\,   0\rightarrow M^{\Delta} \xrightarrow{d_0} M^{\Delta} \oplus M^{\Delta} 
\xrightarrow{d_1}M^{\Delta} \rightarrow 0 ,\qquad n\geqslant 0
\end{equation}
where $d_0(x)=((\Ph-1)x,(\g_n-1)x)$ and $d_1(y,z)=(\g_n-1)y-(\Ph-1)z$ and define  
\[
H^*(K_n,M)=H^*(C^{\bullet}_{\Ph,\gamma_n}(M)).
\] 

Let $\bD$ be an \'etale  $(\Ph,\Gamma)$-module over $\mathscr  E_E$
and let $V$ be a $p$-adic representation of $G_{\BQ_p}$ such that
$\bD=\bD (V).$ Then there  exist isomorphisms
$$
H^*(K_n,\bD) \simeq H^*(K_n,V)
$$
which are functorial and canonical up to the choice of the generator
$\g\in \Gamma$ (see \cite{H1}). This gives an alternative approach
to the Euler-Poincar\'e characteristic formula and the 
the Poincar\'e duality for Galois cohomology of local fields 
\cite{H1}, \cite{H2}. If $\bD$ is an  an \'etale 
 $(\Ph,\Gamma)$-module over $\mathscr E^{\dagger}_E,$ 
 the theorem of Cherbonnier-Colmez implies
 that again $H^i(K_n,\bD) \simeq H^i(K_n,V)$ where $V$ is
 a $p$-adic representation such that $\bD=\Ddagrig (V).$
The cohomology of $(\Ph,\Gamma)$-modules over $\CR_E$  
was studied in detail in \cite{Li} using a non trivial reduction 
to the slope $0$ case. For any $(\Ph,\Gamma)$-module $\bD$ over
$\CR_E$  we consider  $\bD (\chi)=\bD \otimes_E E(\chi)$ equipped with diagonal actions of $\Gamma$ and $\Ph$ (here $\Ph$ acts trivially on $E (\chi)$).  
The main properties of the cohomology groups $H^i(K_n,\bD)$ of $\bD$ are:

{\textit{ 1)  Long cohomology sequence.}} A short exact sequence of $(\Ph,\Gamma)$-modules  over $\CR_E$
\[
0\rightarrow \bD'\rightarrow \bD\rightarrow \bD''\rightarrow 0
\]
gives rise to an exact  sequence
\begin{multline}
0\rightarrow H^0(K_n,\bD')\rightarrow H^0(K_n,\bD)\rightarrow H^0(K_n,\bD)
\xrightarrow{\delta^0}H^1(K_n,\bD')\rightarrow  \cdots \\
\rightarrow H^2(K_n,\bD'')\rightarrow 0. 
\end{multline}

{\textit{ 2)  Euler-Poincar\'e characteristic.}} Let $\bD$ be a 
$(\Ph,\Gamma)$-module over $\CR_E.$
Then  $H^i(\bD)$ are finite dimensional $E$-vector spaces and
\begin{equation}
\label{Euler-Poincare phi gamma}
\chi (K_n,\bD)=\underset{i=0}{{\overset{2}{\sum}}} (-1)^i \dim_E H^i(K_n,\bD)\,=
\,-[K_n:\Qp]\,\textrm{rank}_{\CR_E}(\bD). 
\end{equation}

{\textit{3) Computation of the Brauer group.}} The map 
\begin{equation}
\cl (x) \mapsto -\frac{p^n}{\log \chi (\gamma_n)}\res (xdt) 
\end{equation}
is well defined and induces  an isomorphism 
$\textrm{inv}_{K_n}\,:\,H^2(K_n, \CR_E(\chi)) \iso E$.

{\textit{4) Cup-products.}} Let $\bD'$ and $\bD''$ be two $(\Ph,\Gamma)$-modules
over $\CR_E$. 
For all $i$ and $j$ such that $i+j\leqslant 2$ define a bilinear map
\[
\cup \,:\, H^i(K_n,\bD') \times H^j(K_n,\bD'') \rightarrow  H^{i+j} (K_n,\bD'\otimes \bD'')
\] 
by 
\begin{eqnarray}
\label{cup product formula}
\cl (x) \cup \cl (y) =\cl (x\otimes y)\qquad \text{if $i=j=0$},\nonumber\\
\cl (x) \cup \cl (y_1,y_2)=\cl (x\otimes y_1, x\otimes y_2) \qquad \text{if $i=0$, $j=1$},
\nonumber\\
\cl (x_1,x_2)\cup \cl (y_1,y_2)=\cl (x_2\otimes \gamma_n (y_1)-x_1\otimes \Ph (y_2)) \qquad \text{if $i=1$, $j=1$}, \nonumber\\
\cl (x)\cup \cl (y)=\cl (x\otimes y) \qquad \text{if $i=0$, $j=2$}.
\end{eqnarray}

These maps commute with  connecting homomorphisms in the usual sense.

{\textit{5) Duality.}}  Let $\bD^*=\textrm{Hom}_{\mathscr \CR_E} (\bD, \CR_E).$ For $i=0,1,2$ the cup product
\begin{equation}
\label{duality for coh phi-gamma modules}
H^i(K_n,\bD)\times H^{2-i}(K_n,\bD^*(\chi)) \xrightarrow{\cup} H^2(K_n, \CR_E(\chi))\simeq E 
\end{equation}
is a perfect pairing.

{\it 6) Comparision with Galois cohomology.} Let $\bD$ be an etale $(\Ph,\Gamma)$-
module over $\mathscr E_E^{\dagger}$. Then the  map
\[
C^{\bullet}_{\Ph,\g_n}(\bD)\rightarrow C^{\bullet}_{\Ph,\g_n}(\bD\otimes \CR_E)
\]
induced by the natural map $\bD\rightarrow \bD\otimes_{\mathscr E_E^{\dagger}} \CR_E$
is a quasi-isomorphism. In particular, if $V$ is a $p$-adic representation
of $G_{\Qp}$ then 
\[
H^*(K_n,V) \iso H^*(K_n,\Ddagrig (V)).
\]

\subsection{Relation to the $p$-adic Hodge theory} In \cite{Fo90}, Fontaine proposed to classify 
$p$-adic representations
arising in the $p$-adic Hodge theory in terms of $(\Ph,\Gamma)$-modules (Fontaine's
program). More precisely, the problem is  to recover classical Fontaine's functors
$\Dd (V),$ $\Dst (V)$ and $\Dc (V)$ (see for example \cite{Fo94b}) from $\Ddagrig (V).$ The compete solution was
obtained by Berger in \cite{Ber02}, \cite{Ber08}. His theory also  allows 
to prove that each de Rham representation is potentially semistable. 
See also   \cite{Cz03} for introduction and relationship to the
theory of $p$-adic differential equations. In this section we 
review some of  results of Berger in the case where  the ground field 
is $\BQ_p.$ 
Consider the following categories 

$\bullet$ The category $\mathbf M\mathbf F_E$ of finite dimensional $E$ vector spaces $M$ equipped
with an exhaustive decreasing filtration $(\F^iM)_{i\in\mathbf Z}.$

$\bullet$ The category  $\mathbf M\mathbf F^{\Ph,N}_E$  
finite dimensional $E$ vector spaces $M$ equipped
with an exhaustive decreasing filtration $(\F^iM)_{i\in\mathbf Z},$ 
a linear  bijective Frobenius  
map $\Ph \,:\,M\rightarrow M$   and a nilpotent operator (monodromy) 
$N\,:\,M\rightarrow M$ such that $\Ph N=p\,\Ph N.$

$\bullet$ The subcategory $\mathbf M\mathbf F^{\Ph}_E$ of $\mathbf M\mathbf F^{\Ph,N}_E$
formed by filtered $(\Ph,N)$-modules $M$ such that $N=0$ on $M$.

Let  $\Qp^{\cyc}((t))$
be the ring of Laurent power series 
equipped with the filtration 
$\F^i\Qp^{\cyc}((t))=t^i\Qp^{\cyc}[[t]]$ and the action of $\Gamma$ given by
$g \left (\displaystyle\underset{k\in\mathbf Z}\sum a_kt^k \right )=
\displaystyle\underset{k\in\mathbf Z}\sum g(a_k)\chi (g)^kt^k.$ 
The ring $\CR_{E}$ can not be naturally embedded in 
\linebreak
$E\otimes\Qp^{\cyc}((t))$
but for any $r>0$ small enough and $n\gg 0$  there exists a $\Gamma$-equivariant  embedding 
$i_n\,:\,\CR_{E}^{(r)}\rightarrow E\otimes \Qp^{\cyc}((t))$ which sends  $X$
onto $\zeta_{p^n}e^{t/p^n}-1.$ 
Let $\bD$ be a $(\Ph,\Gamma)$-module over $\CR_E.$ One can construct for such $r$ 
a natural $\Gamma$-invariant  $\CR^{(r)}_E$-lattice $\bD^{(r)}.$ Then
\[
\mathscr D_{\dR} (\bD)=\left ( E\otimes \Qp^{\cyc}((t))\otimes_{i_n}\bD^{(r)} \right )^{\Gamma}
\]
is a finite dimentional $E$-vector space equipped with a decreasing filtration
\[
\F^i\mathscr \Dd (\bD)=\left ( E\otimes \F^i\Qp^{\cyc}((t))\otimes_{i_n}\bD^{(r)} \right )^{\Gamma}
\]
which does not depend on the choice of $r$ and  $n.$ 

Let $\CR_E[\ell_X]$ denote the ring of power series with coefficients 
in $\CR_E.$ Extend the actions of  $\Ph$ and $\Gamma$ to $\CR_E[\ell_X]$ setting
\[
\Ph (\ell_X) =p\ell_X+ \log \left (\frac{\Ph (X)}{X^p} \right ),
\qquad
g (\ell_X) =\ell_X + \log \left (\frac{g(X)}{X} \right ),\quad g\in \Gamma
\]
( note that $\log (\Ph (X)/X^p)$ and 
$\log ( g (X)/X)$  converge in $\CR_E$). Define a monodromy operator 
$
N\,:\,\CR_E[\ell_X]\rightarrow \CR_E[\ell_X]
$
by  $N = -\displaystyle\left (1-\frac{1}{p} \right )^{-1}\displaystyle \frac{d}{d \log X}.$
For any $(\Ph,\Gamma)$-module $\bD$ define 
\begin{eqnarray}
\CDst (\bD)= \left (\bD\otimes_{\CR_E} \CR_{E}[\ell_X, 1/t  ] \right )^{\Gamma}, \quad t=\log (1+X),\\
\CDcris (\bD)= \CDst (\bD)^{N=0}= (\bD [1/t])^{\Gamma}.
\end{eqnarray}
Then $\CDst (\bD)$ is a finite dimensional  $E$-vector space equipped with  natural actions of $\Ph$ and $N$ such that $N\Ph=p\,\Ph N.$ Moreover, it is equipped with a canonical exhaustive decreasing filtration induced by the embeddings $i_n.$ We have therefore 
three functors
\[
\mathscr D_{\dR} \,:\,{\mathbf M}^{\Ph,\Gamma}_{\CR_E} \rightarrow \mathbf M\mathbf F_{E},
\quad
\mathscr D_{\st} \,:\,{\mathbf M}^{\Ph,\Gamma}_{\CR_E} \rightarrow
\mathbf M\mathbf F^{\Ph,N}_{E},\quad 
\mathscr D_{\cris} \,:\,{\mathbf M}^{\Ph,\Gamma}_{\CR_E} \rightarrow 
\mathbf M\mathbf F^{\Ph}_{E}.
\]

\begin{mytheorem}[\text{\sc Berger}]
\label{berger theorem}
 Let $V$ be a $p$-adic representation of $G_{\Qp}.$
Then 
\[
\bD_*(V)\simeq \mathscr D_*(V),\qquad *\in\{\text{\rm dR},\text{\rm st},\text{\rm cris}\}.
\]
\end{mytheorem}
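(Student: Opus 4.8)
The plan is to reduce the statement, for each of the three functors $* \in \{\mathrm{dR}, \mathrm{st}, \mathrm{cris}\}$, to the comparison between the classical Fontaine ring and its incarnation on the level of $(\Ph,\Gamma)$-modules over the Robba ring, and then to trace through the definitions. First I would recall Berger's fundamental identification: for a $p$-adic representation $V$ one has $\Ddagrig(V) = \Ddag(V) \otimes_{\mathscr E_E^\dagger} \CR_E$, and via the embeddings $i_n : \CR_E^{(r)} \hookrightarrow E \otimes \Qp^{\cyc}((t))$ the $\CR_E^{(r)}$-lattice $\Ddagrig(V)^{(r)}$ maps into $E \otimes \Bd \otimes_{\Qp} V$ compatibly with $\Gamma$-actions and filtrations. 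The key input — which I would cite from Berger \cite{Ber02}, \cite{Ber08} — is that $\Bd$ (resp. $\Bst$, resp. $\Bc$) is recovered from $\Btilderig^+[1/t]$ together with the operators $\Ph$, $N$ and the data of the $i_n$, so that taking $\Gamma$-invariants of $\Ddagrig(V)$ after the appropriate scalar extension reproduces exactly $(\Bd \otimes V)^{G_{\Qp}}$, etc.

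Concretely, I would argue in three steps. For $* = \mathrm{dR}$: the embedding $i_n$ together with the definition $\mathscr D_{\dR}(\bD) = (E \otimes \Qp^{\cyc}((t)) \otimes_{i_n} \bD^{(r)})^{\Gamma}$ must be matched against $\Dd(V) = (\Bd \otimes_{\Qp} V)^{G_{\Qp}}$; the point is that the $G_{\Qp}$-invariants of $\Bd \otimes V$ factor through $\Qp^{\cyc}((t))$-coefficients and through the overconvergent lattice, which is precisely Berger's construction of $i_n$. The filtration compatibility is then immediate from $\F^i \Qp^{\cyc}((t)) = t^i \Qp^{\cyc}[[t]]$ corresponding to $\F^i \Bd$. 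For $* = \mathrm{st}$: one compares $\CDst(\bD) = (\bD \otimes_{\CR_E} \CR_E[\ell_X, 1/t])^{\Gamma}$ with $\Dst(V) = (\Bst \otimes_{\Qp} V)^{G_{\Qp}}$; here $\ell_X$ plays the role of $\log[\widetilde\pi]$ (the variable adjoined to $\Bc$ to form $\Bst$), and the prescribed formulas for $\Ph(\ell_X)$, $g(\ell_X)$ and $N = -(1 - 1/p)^{-1} d/d\log X$ are exactly designed so that $\CR_E[\ell_X]$ is the $(\Ph,\Gamma)$-module-theoretic avatar of $\Bst$; I would invoke Berger's theorem that $\Dst(V) \simeq (\Ddagrig(V)[1/t, \ell_X])^{\Gamma}$ together with the compatibility of $\Ph$ and $N$. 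For $* = \mathrm{cris}$: this falls out by taking $N = 0$ on both sides, since $\Dc(V) = \Dst(V)^{N=0}$ and $\CDcris(\bD) = \CDst(\bD)^{N=0} = (\bD[1/t])^{\Gamma}$, the last equality because $\ell_X$ disappears under $N = 0$.

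The main obstacle — and the part I would not attempt to reprove — is the identification of $\CR_E[\ell_X, 1/t]$ (with its $\Ph$, $\Gamma$, $N$) as the correct substitute for $\Bst$ at the level of the Robba ring, equivalently the statement that $(\Btilderig^+[1/t] \otimes_{\CR_E} \bD)^{\Gamma}$ computes $\Dst$. This rests on Berger's structure theory for $\Ddagrig(V)$ over $\CR_E$, in particular on the fact that the operators $\Ph$ and $N$ extend to $\CR_E[\ell_X]$ with the indicated formulas and that the resulting functor is exact and compatible with the classical one on the essential image of overconvergent étale modules. Granting that, the theorem is a matter of unwinding the definitions of the $i_n$, checking $\Gamma$-invariance, and verifying that the filtrations, Frobenius, and monodromy correspond; these are the routine verifications I would carry out in detail but only sketch here. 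I would also remark that it suffices to prove the statement functorially, so that compatibility with short exact sequences and duality follows formally once the comparison isomorphism is established on objects.
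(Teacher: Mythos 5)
Your proposal is correct and follows the same route as the paper, which simply refers to Berger \cite{Ber02} for this result; your sketch of the reduction via the embeddings $i_n$, the ring $\CR_E[\ell_X,1/t]$ as the Robba-ring avatar of $\Bst$, and the passage to $N=0$ for the crystalline case is an accurate outline of Berger's argument, with the genuinely hard input correctly identified and deferred to \cite{Ber02}, \cite{Ber08}.
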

\begin{proof} See \cite{Ber02}.
\end{proof}

For any $(\Ph,\Gamma)$-module over $\CR_E$ one has 
\[
\dim_E \CDcris (\bD)\leqslant \dim_ E\CDst (\bD) \leqslant
\dim_E\mathscr D_{\dR}(\bD)\leqslant 
 \textrm{rg}_{\CR_E} (\bD).
\] 
One says that $\bD$ is de Rham (resp. semistable, resp. crystalline) if 
\linebreak
$\dim_L \mathscr D_{\dR} (\bD)= \textrm{rg}_{\CR_E} (\bD)$
(resp. $\dim_ E\CDst (\bD) = \textrm{rg}_{\CR_E} (\bD)$, resp.
\linebreak
$\dim_E \CDcris (\bD)= \text{rg}_{\CR_E} (\bD)$).
Let
$\mathbf M^{\Ph,\Gamma}_{\CR_E,\textrm{pst}}$ and $\mathbf M^{\Ph,\Gamma}_{\CR_E,\cris}$
denote  the categories
of semistable and crystalline 
$(\Ph,\Gamma)$-modules  respectively.
Berger proved  (see \cite{Ber08}  ) that the functors 
\begin{equation}
\label{equivalence Dst and Dcris for phi-gamma mod}
\CDst \,:\,\mathbf M^{\Ph,\Gamma}_{\CR_E,\text{st}} \rightarrow \mathbf M\mathbf F^{\Ph,N}_{E},\qquad
\CDcris \,:\,\mathbf M^{\Ph,\Gamma}_{\mathscr R_E,\cris}\rightarrow \mathbf M\mathbf F^{\Ph}_{E}  
\end{equation}
are equivalences of $\otimes$-categories.
If $\bD$ is de Rham, the jumps of the filtration 
$\F^i\mathscr D_{\dR} (\bD)$ will be  called the  Hodge-Tate weights of $\bD.$ 

\subsection{Families of $(\Ph,\Gamma)$-modules} In this section we review the theory of $(\Ph,\Gamma)$-modules 
in families  and its relationship to families of $p$-adic representations
following \cite{BCz}, \cite{KL10}, \cite{Po13} and \cite{KPX}. 
Let $A$ be an affinoid algebra over $\Qp.$ For each $0\leqslant r<1$ the ring $\CR^{(r)}_{\Qp}$ 
is equipped with a canonical  Fr\'echet topology (see \cite{Ber02}) and we define
$\CR_{A}^{(r)}=A\widehat\otimes_{\Qp}  \CR_{\Qp}^{(r)}.$ Set
$\CR_{A}=\underset{0\leqslant r<1}\cup \CR_{A}^{(r)}.$ The actions
of $\Ph$ and $\Gamma$ on $\CR_{\Qp}$ extend by linearity to $\CR_{A}.$ 
We remark that $\CR_A^{(r)}$ is stable under the action of $\Gamma$ 
and that $\Ph (\CR_A^{(r)})\subset \CR_A^{(r^{1/p})}$
( but $\Ph (\CR_A^{(r)})\not \subset \CR_A^{(r)}$).
In order to obtain objects with reasonable behavior one defines
first $(\Ph,\Gamma)$-modules over $\CR_A^{(r)}.$ 
$(\Ph,\Gamma)$-modules over $\CR_A$ are defined by extension of coefficients
from $\CR_A^{(r)}$ to $\CR_A.$

\begin{definition} i) A $(\Ph,\Gamma)$-module over $\CR_A^{(r)}$ is a finite 
projective $\CR_A^{(r)}$ -module $\bD^{(r)}$ equipped with the following structures

a) An isomorphism of $\CR_A^{(r^{1/p})}$-modules 
\[
\Ph^*\,:\,\bD^{(r)}\otimes_{\CR_A^{(r)},\Ph}\CR_A^{(r^{1/p})} 
\iso \bD^{(r^{1/p})}.
\]

b) A semilinear continuous action of $\Gamma$ on $\bD^{(r)}.$ 

ii) One says that $\bD$ is a $(\Ph,\Gamma)$-modules over $\CR_A$ if  
$\bD=\bD^{(r)}\otimes_{\CR_A^{(r)}}\CR_A$ for some $(\Ph,\Gamma)$-module $\bD^{(r)}$
over $\CR_A^{(r)}.$
\end{definition}

The following proposition shows that if $A=E$ this definition is compatible
with the definition given in Section 1.1.

\begin{myproposition} Let $\bD$ be a $(\Ph,\Gamma)$-module over $\CR_E.$
There exists $0\leq r(\bD)<1$ such that for any $r$ such that $r(\bD)\leq r<1$ 
there exists a unique free $\CR_E^{(r)}$-submodule $\bD^{(r)}$ of $\bD$ 
having the following properties

a) $\bD^{(s)}=\bD^{(r)}\otimes_{\CR_E^{(r)}}\CR_E^{(s)}$ for  $r\leqslant s<1.$

b)  $\bD =\bD^{(r)}\otimes_{\CR_E^{(r)}}\CR_E^.$

c) The Frobenius $\Ph$ induces isomorphisms
$$
\Ph^*\,:\,\bD^{(r)}\otimes_{\CR_E^{(r)},\Ph}\CR_E^{(r^{1/p})} 
\iso \bD^{(r^{1/p})},\quad r(\bD)\leqslant r<1.
$$
\end{myproposition}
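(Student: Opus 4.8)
The plan is to reconstruct the model $\bD^{(r)}$ over $\CR_E^{(r)}$ directly from the $(\Ph,\Gamma)$-module $\bD$ over $\CR_E$, using the fact that $\CR_E = \bigcup_r \CR_E^{(r)}$ and that $\CR_E^{(r)}$ is a B\'ezout domain (as recalled in Section 1.1, following \cite{La62}). First I would fix a basis $e_1,\ldots,e_d$ of $\bD$ over $\CR_E$. Since all coefficients lie in $\CR_E$, there is some $r_0<1$ with $e_1,\ldots,e_d$ and the matrices of $\Ph$ and of a fixed topological generator $\g_0$ all having entries in $\CR_E^{(r_0)}$. The crux is that one must also control $\Ph^{-1}$: because $A\Ph(\bD)=\bD$ by the very definition of a $(\Ph,\Gamma)$-module, the matrix of $\Ph$ in the basis $(e_i)$ is invertible over $\CR_E$, hence its inverse has entries in $\CR_E^{(r_1)}$ for some $r_1<1$. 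Enlarging $r_0$ to some $r(\bD)$ past $r_1$, I would set $\bD^{(r(\bD))} = \sum_i \CR_E^{(r(\bD))} e_i$ and then $\bD^{(r)} = \bD^{(r(\bD))}\otimes_{\CR_E^{(r(\bD))}}\CR_E^{(r)}$ for $r(\bD)\le r<1$. Properties (a) and (b) are then immediate from $\CR_E = \bigcup_r \CR_E^{(r)}$ and the flatness (indeed, localization-type) behaviour of the transition maps $\CR_E^{(r)}\to\CR_E^{(s)}$.

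The more delicate points are the $\Gamma$-stability of $\bD^{(r)}$ and the Frobenius isomorphism in (c). For $\Gamma$-stability: since the Robba ring at radius $r$ is stable under $\Gamma$ and the matrix of $\g_0$ together with its inverse (which exists because $\Gamma$ acts, so $\g_0$ is invertible on $\bD$) has entries in $\CR_E^{(r)}$ once $r$ is large enough, the lattice $\bD^{(r)}$ is carried into itself by $\g_0$; a topological argument, using the continuity of the $\Gamma$-action and the fact that $\Gamma_0$ is a procyclic pro-$p$ group topologically generated by $\g_0$, upgrades this to stability under all of $\Gamma$ (one expresses $g(e_i)$ as a convergent limit of iterates of $\g_0$, all lying in the closed submodule $\bD^{(r)}$). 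I would note that this is the step where one genuinely uses that $\Gamma$-stable submodules at a fixed radius are preserved; it is essentially the content of \cite{Ber02}, Section I, and can be cited.

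For (c), the key observation is that $\Ph$ sends $\CR_E^{(r)}$ into $\CR_E^{(r^{1/p})}$ (this is the $n=1$ case of the $i_n$ discussion, and is why the radius improves under Frobenius). Since the matrix $M_\Ph$ of $\Ph$ in the basis $(e_i)$ and its inverse $M_\Ph^{-1}$ both have entries in $\CR_E^{(r)}$ for $r\ge r(\bD)$, and $\Ph(\CR_E^{(r)})\subset\CR_E^{(r^{1/p})}$, the semilinear map $\Ph$ induces a well-defined $\CR_E^{(r^{1/p})}$-linear map $\Ph^*\colon \bD^{(r)}\otimes_{\CR_E^{(r)},\Ph}\CR_E^{(r^{1/p})}\to\bD^{(r^{1/p})}$ whose matrix is $M_\Ph$ viewed over $\CR_E^{(r^{1/p})}$; its inverse is given by $M_\Ph^{-1}$, again with entries in $\CR_E^{(r^{1/p})}\subset\CR_E^{(r)}$, so $\Ph^*$ is an isomorphism.

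It remains to address uniqueness, which I would handle last and which is the main obstacle conceptually. Suppose $\bD^{(r)}$ and $\bD'^{(r)}$ are two free $\CR_E^{(r)}$-submodules satisfying (a)--(c). Comparing bases gives a change-of-basis matrix $U\in GL_d(\CR_E)$ which, by (b), is forced to have entries in $\CR_E^{(r)}$ after possibly increasing $r$; but the Frobenius-compatibility (c) for both lattices forces $U$ to satisfy $\Ph(U) = M_\Ph U M_\Ph^{-1}$ (a $\sigma$-conjugation identity over the annulus), and a standard argument --- iterating this relation and using that $\Ph$ improves the radius of convergence, i.e. $\Ph^k(\CR_E^{(r)})\subset\CR_E^{(r^{1/p^k})}$ with $r^{1/p^k}\to 1$ --- shows $U$ already has entries in $\CR_E^{(r)}$ for every admissible $r$ and is compatible with all transition maps, so $\bD^{(r)}=\bD'^{(r)}$. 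The one point requiring care is the interplay between the "uniqueness" claim as stated (uniqueness of the submodule of $\bD$, for each $r$) and the consistency of the family across varying $r$; I expect the cleanest route is to prove uniqueness for one sufficiently large $r$ and then derive it for all $r\ge r(\bD)$ from (a). For full details, this is \cite{Ber02}, Th\'eor\`eme I.3.3 (equivalently \cite{Cz08}, Section 1), which I would invoke.
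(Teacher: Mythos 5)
Your proposal is correct and matches the paper's treatment: the paper proves this proposition simply by quoting Theorem 1.3.3 of \cite{Ber08}, and your sketch is a faithful outline of Berger's argument — existence by finding a common annulus of convergence for the matrices of $\Ph$, $\Ph^{-1}$ and a topological generator of $\Gamma$, and uniqueness via the Frobenius conjugation relation satisfied by the change-of-basis matrix, with the remaining details delegated to the reference. The only correction is bibliographic: the relevant statement is Th\'eor\`eme I.3.3 of \cite{Ber08} (the Ast\'erisque 319 article on $(\Ph,N)$-modules filtr\'es), which is what the paper cites, rather than \cite{Ber02}.
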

\begin{proof} This is Theorem 1.3.3 of \cite{Ber08}.
\end{proof}

Let $\Rep_A(G_{\Qp})$ be the category of  projective $A$-modules
of finite rank equipped with an $A$-linear continuous action of $G_{\Qp}.$
The construction of the functor  $\Ddagrig$ can be directly generalized to this case 
\cite{BCz}, \cite{KL10}. More precisely,  there exists a  fully faithful exact functor
\[
\DdagrigA\,:\,\Rep_A(G_{\Qp})\rightarrow \mathbf M^{\Ph,\Gamma}_{\CR_A}
\]
which commutes with base change. Let  $\mathscr X=\mathrm{Spm}(A).$
For each  $x\in \mathscr X$ we denote by $\mathfrak m_x$ the maximal ideal 
of $A$ associated to $x$ and $E_x=A/\mathfrak m_x.$ If $V$ (resp. $\bD$)
is an object of $\Rep_A(G_{\Qp})$ (resp. of 
$\mathbf M^{\Ph,\Gamma}_{\CR_A}$) we set $V_x=V\otimes_AE_x$
(resp. $\bD_x=\bD\otimes_AE_x$). 
Then the diagram
\[
\xymatrix
{\Rep_A(G_{\Qp}) \ar[r]^-{\DdagrigA} \ar[d]^{\otimes E_x} &\mathbf M^{\Ph,\Gamma}_{\CR_A}
\ar[d]^{\otimes E_x}
\\
\Rep_{E_x}(G_{\Qp}) \ar[r]^-{\Ddagrig}  &\mathbf M^{\Ph,\Gamma}_{\CR_{E_x}}
}
\]
commutes 
i.e. 
$
\DdagrigA (V)_x\simeq \Ddagrig (V_x).
$
We remark that in general  the essential image of $\DdagrigA$ does not
coincide with the subcategory of \'etale modules. See \cite{BCz}
\cite{KPX}, \cite{Hel12} for further discussion. 

Let $\bD$ be a $(\Ph,\Gamma)$-module over $\CR_A.$ As in the case $A=E$ we 
attach  to $\bD$  Fontaine-Herr complexes $C_{\Ph,\gamma_n}^{\bullet}(\bD)$
and consider the associated cohomology groups $H^*(K_n,\bD).$ 
We summarize 
the main properties of these cohomology  in the theorem below. 
The key  result here  is the finiteness of the rank of $H^*(K_n,\bD).$

\begin{mytheorem}
\label{cohomology in families}
 Let $A$ be an affinoid algebra over $\Qp$  and let
$\bD$ be a $(\Ph,\Gamma)$-module over $\CR_A.$ Then

i) Finiteness.  The cohomology groups $H^i(K_n,\bD)$ are finitely generated $A$-modules. More precisely,
for each $n\geqslant 0$ 
the complex $C_{\Ph,\g_n}^{\bullet}(\bD)$ is 
quasi-isomorphic to the complex of projective $A$-modules of finite rank concentrated
in degrees $0,$ $1$ and $2$. 

ii) Base change. If $f\,:\,A\rightarrow B$ is  a morphism of affinoid algebras, then
\[
C^{\bullet}_{\Ph,\g_n}(\bD)\otimes ^{\mathbf L}_{\CR_A}\CR_B\iso C^{\bullet}_{\Ph,\g_n}(\bD\widehat\otimes_{\CR_A}\CR_B).
\]
In particular, if $x\in \mathscr X,$ then 
\[
C^{\bullet}_{\Ph,\g_n}(\bD)\otimes ^{\mathbf L}_{\CR_A}E_x\iso C^{\bullet}_{\Ph,\g_n}(\bD_x).
\]

iii) Euler-Poincar\'e formula. One has 
\[
\chi (K_n,\bD)= \underset{i=0}{\overset{2}{\sum}} \textrm{rank}_A H^i(K_n,\bD)\,=
\,-[K_n:\Qp]\,\mathrm{rg}_{\CR_A} (\bD)
\]
where the rank is considered as a function $\text{\rm rg}_A\,:\,\text{\rm Spm}(A)\rightarrow \mathbf N.$

iv) Duality.  The formulas  \eqref{cup product formula} define a duality
{\rm
\[
C^{\bullet}_{\Ph,\g_n}(\bD) \iso \R\textrm{Hom}_A (C^{\bullet}_{\Ph,\g_n}(\bD^*(\chi)),A)[-2].
\]
}
v) Comparision with Galois cohomology.  Let $V$ be a $p$-adic representation with coefficients in
$A$. Then there are functorial  isomorphisms
{\rm
\[
H^*(K_n,V)\iso H^*(K_n, \DdagrigA (V))
\]
}
\end{mytheorem}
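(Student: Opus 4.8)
The plan is to concentrate on part (i), since once each Fontaine--Herr complex $C^{\bullet}_{\Ph,\g_n}(\bD)$ is known to be quasi-isomorphic to a complex of finite projective $A$-modules in degrees $0,1,2$, the remaining assertions (ii)--(v) follow by the standard formalism of perfect complexes together with the case $A=E$ already recorded in Section 1.2. So the whole difficulty lies in the finiteness in (i).

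To prove (i) I would first note that taking $\Delta$-invariants is exact on $\Qp$-vector spaces since $|\Delta|$ divides $p-1$; hence $C^{\bullet}_{\Ph,\g_n}(\bD)$ is, up to that harmless operation, the total complex of the bicomplex built from the two commuting operators $\Ph-1$ and $\g_n-1$ on $\bD$, i.e. the mapping cone of $\g_n-1$ acting on the two-term complex $K^{\bullet}_{\Ph}(\bD)=[\bD\xrightarrow{\Ph-1}\bD]$. One cannot argue naively here, because the cohomology of $K^{\bullet}_{\Ph}(\bD)$ is in general \emph{not} finitely generated over $A$ (if it were, the cone could not carry the nonzero Euler characteristic recorded in \eqref{Euler-Poincare phi gamma}). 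The genuine input is analytic: working with the models $\bD^{(r)}$ of Proposition 1, one has isomorphisms $\Ph^{*}\colon\bD^{(r)}\otimes_{\CR_E^{(r)},\Ph}\CR_E^{(r^{1/p})}\iso\bD^{(r^{1/p})}$ while the restriction map $\bD^{(r^{1/p})}\to\bD^{(r)}$, from a slightly larger annulus to a slightly smaller one, is completely continuous; thus $\Ph-1$ and $\g_n-1$ act on the relevant Fr\'echet $A$-modules as ``isomorphism plus compact operator'', hence are Fredholm. Consequently, although $H^{0}$ and $H^{1}$ of $K^{\bullet}_{\Ph}(\bD)$ are large, the operator $\g_n-1$ acts on them with finitely generated kernel and cokernel, and the long exact sequence of the mapping cone then forces $H^{i}(K_n,\bD)$ to be finitely generated over $A$. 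A routine argument then upgrades the three-term complex to a perfect complex of amplitude $[0,2]$. Its Euler characteristic is computed pointwise: at $x\in\mathscr X$ it equals $\chi(K_n,\bD_x)$, which by \eqref{Euler-Poincare phi gamma} (Liu \cite{Li}) is $-[K_n:\Qp]\,\mathrm{rg}_{\CR_{E_x}}(\bD_x)$, whence (iii). (The case $A=E$ of \eqref{Euler-Poincare phi gamma} itself rests on Kedlaya's slope filtration, which d\'evisses a general $\bD$ over $\CR_E$ into \'etale pieces, together with Herr's computation in the \'etale case via \cite{H1} and \cite{CC1}.)

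Granting (i), statement (ii) is the derived base-change property of perfect complexes, once the natural arrow $C^{\bullet}_{\Ph,\g_n}(\bD)\otimes^{\mathbf L}_{\CR_A}\CR_B\to C^{\bullet}_{\Ph,\g_n}(\bD\widehat\otimes_{\CR_A}\CR_B)$ is identified with the comparison map --- which reduces to the compatibility of $\bD\mapsto\bD^{(r)}$ with completed tensor products, built into the definition of $(\Ph,\Gamma)$-modules over $\CR_A$. For (iv), the formulas \eqref{cup product formula} give a morphism $C^{\bullet}_{\Ph,\g_n}(\bD)\to\R\Hom_A(C^{\bullet}_{\Ph,\g_n}(\bD^{*}(\chi)),A)[-2]$ of perfect complexes whose derived fibre at every $x\in\mathscr X$ is the corresponding map for $\bD_x$ over $E_x$, which is a quasi-isomorphism by Liu's duality \eqref{duality for coh phi-gamma modules}; since a map of perfect complexes over an affinoid algebra that is a quasi-isomorphism at all points of $\mathscr X$ is itself a quasi-isomorphism, (iv) follows. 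Statement (v) follows the same pattern: for $A=E$ it is the comparison isomorphism of Section 1.2 (Cherbonnier--Colmez together with \cite{H1}), and in general both sides are perfect complexes whose formation commutes with $\otimes E_x$ --- the continuous $G_{\Qp}$-cohomology of a finitely generated $A$-module by its usual finiteness and base-change properties, the $(\Ph,\Gamma)$-side by (ii) --- so the functorial map, being a quasi-isomorphism at each $x$ by the case $A=E_x$ and the compatibility $\DdagrigA(V)_x\simeq\Ddagrig(V_x)$, is a quasi-isomorphism.

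The step I expect to be the main obstacle is the finiteness in (i): making the Fredholm/compactness argument for $\Ph-1$ work uniformly over an affinoid base. The delicate features are that the rank of $\bD$ is only \emph{locally} constant on $\mathscr X$, so that one must genuinely produce a perfect complex rather than a complex of free $A$-modules, and that $\Ph$ preserves no single $\CR_A^{(r)}$, which forces one to keep track of the pair of radii $r,\,r^{1/p}$ throughout. This is precisely the technical heart of \cite{KPX} (generalizing Liu \cite{Li} and relying on Kedlaya's slope theory \cite{Ke}).
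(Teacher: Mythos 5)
The paper does not actually prove this theorem: its ``proof'' is the single line ``See \cite{KPX}, Theorems 4.4.1, 4.4.2, 4.4.5 and \cite{Po13}, Theorem 2.8.'' So the right comparison is between your sketch and the arguments of those references. Your architecture matches theirs: everything hinges on the perfectness in (i), after which (ii)--(v) follow from the formalism of perfect complexes over an affinoid (base change; a map of perfect complexes that is a quasi-isomorphism on every derived fibre is a quasi-isomorphism, since affinoids are Jacobson and Nakayama kills the cone), combined with the $A=E$ statements of Section 1.2. Your parenthetical observation that $[\bD\xrightarrow{\Ph-1}\bD]$ cannot itself be perfect --- because the cone of an endomorphism of a perfect complex has Euler characteristic $0$, contradicting \eqref{Euler-Poincare phi gamma} --- is correct and well taken.

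The one place where your sketch diverges from the actual proof is the finiteness step itself. In \cite{KPX} the argument is not a direct Fredholm analysis of $\Ph-1$ and $\g_n-1$ on the Herr complex; slope filtrations are unavailable in families, and the authors instead work with the left inverse $\psi$ of $\Ph$, first establishing finiteness and base change for the Iwasawa-theoretic complex $[\bD\xrightarrow{\psi-1}\bD]$ over $A\,\widehat\otimes\,$(a distribution algebra of $\Gamma$), and then descending to each $C^{\bullet}_{\Ph,\g_n}(\bD)$. The analytic input you identify --- that restriction from the annulus of radius $r^{1/p}$ to that of radius $r$ is completely continuous, so the relevant operators improve convergence --- is indeed the engine, but it enters through $\psi$ rather than through a Fredholm property of $\Ph-1$ itself. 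As written, your ``isomorphism plus compact operator'' claim for $\Ph-1$ is not literally correct ($\Ph$ does not preserve any single $\CR_A^{(r)}$, as you note, and its cokernel on $\bD$ is huge), so this step of your outline is a heuristic rather than a proof; since you explicitly flag it as the technical heart to be supplied from \cite{KPX}, this is an acknowledged black box rather than an error, and the remainder of your reductions is sound.
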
 
\begin{proof} See \cite{KPX}, Theorems 4.4.1, 4.4.2, 4.4.5 and \cite{Po13}, Theorem 2.8.
\end{proof}

\subsection{ Derived categories} There exists the derived version of the
comparision isomorphisms v) of Theorem \ref{cohomology in families}
which is  important for the formalism of Iwasawa theory \cite{Ben14}, \cite{Po13}. Let $A$ be an affinoid algebra over $\Qp.$
For any $(\Ph,\Gamma)$-module $\bD$ over $\CR_A$ define
\[
C_{\g_n}^{\bullet}(\bD) =\left [\bD \xrightarrow{\gamma_n-1} \bD \right ]
\]
where the first term is placed in degree $0.$ Then $C_{\Ph,\g_n}^{\bullet}(\bD)$
can be defined as the total complex 
\[
C_{\Ph,\g_n}^{\bullet}(\bD)=\Tot^{\bullet} \left (C_{\g_n}^{\bullet}(\bD)
\xrightarrow{\Ph-1} C_{\g_n}^{\bullet}(\bD) \right ).
\]

If $\bD'$ and $\bD''$ are two $(\Ph,\Gamma)$-modules over $\CR_A$, the cup product 
\[
\cup_{\g}\,\,:\,\,C_{\g_n}^{\bullet}(\bD') \otimes C_{\g_n}^{\bullet}(\bD'')
\rightarrow C_{\g_n}^{\bullet}(\bD'\otimes \bD'')
\]
defined by
\[
\cup_{\g}(x_i\otimes y_j)=
\begin{cases} x_i\otimes \g^n(y_j) &\text{if $x_i\in C_{\g_n}^i(\bD'),$ $y_j\in C_{\g_n}^j(\bD'')$ and 
$i+j=0$ or $1$},\\
0 &\text{if $i+j\geqslant 2.$}
\end{cases}
\] 
gives rise to a map of complexes
\[
\cup \,:\,C^{\bullet}_{\Ph,\g_n}(\bD') \otimes C^{\bullet}_{\Ph,\g_n} (\bD'')
\rightarrow  C^{\bullet}_{\Ph,\g_n}(\bD'\otimes \bD'').
\]
Explicitly 
\begin{equation}
\cup ((x_{i-1},x_i)\otimes (y_{j-1},y_j))=
(x_i\cup_{\g}y_{j-1}+(-1)^j x_{i-1}\cup_{\g}\Ph (y_j), x_i\cup_{\g}y_j)
\end{equation}
if $(x_{i-1},x_{i})\in C^{i}_{\Ph,\g_n}(\bD')=C_{\g_n}^{i-1}(\bD')\oplus C_{\g_n}^i(\bD')$ and
$(y_{j-1},y_{j})\in C^{j}_{\Ph,\g_n}(\bD'')=C_{\g_n}^{j-1}(\bD'')\oplus C_{\g_n}^j(\bD'').$
It is easy to see that this is a bilinear map which induces the cup-product 
\eqref{duality for coh phi-gamma modules}
on cohomology groups.

Let $H_{\BQ_p}=\Gal (\overline\BQ_p/\Qp^{\cyc}).$ In \cite{Ber02}, Berger constructed a topological ring $\BrdAr$
equipped with commuting actions of $\Ph$ and $G_{\BQ_p}$ such that
$\bD^{\dagger, r}_{\mathrm{rig},A}(V)\subset V\otimes_A \BrdAr$     
for each Galois representation $V.$ Moreover one has an exact sequence 
\begin{equation}
\label{exact sequence for BrdAr}
0\rightarrow A\rightarrow \BrdAr \xrightarrow{\Ph-1}\BrdArp \rightarrow 0
\end{equation} 
(see \cite{Ber03}, Lemma 1.7). 
Set $\BrdA=\underset{0<r}\varinjlim \BrdAr.$ Passing to
limits in \eqref{exact sequence for BrdAr} we obtain
an exact sequence 
\begin{equation}
\label{exact sequence for BrdA}
0\rightarrow A\rightarrow \BrdA \xrightarrow{\Ph-1}\BrdA \rightarrow 0
\end{equation} 
Set  $G_{K_n}=\Gal (\overline{\Q}_p/\Qp (\mu_{p^n})).$  Tensoring  \eqref{exact sequence for BrdA}
with a $p$-adic representation $V$ and taking continuous cochains one obtains an exact sequence
\[
0\rightarrow C^{\bullet}(G_{K_n},V)\rightarrow 
C^{\bullet}\left (G_{K_n},V\otimes \BrdA \right )\xrightarrow{\Ph-1}
C^{\bullet}\left (G_{K_n},V\otimes \BrdA \right )\rightarrow 0.
\]
Define 
\begin{equation}
\label{definition of K(V)}
K^{\bullet}(K_n,V)=
{\Tot}^{\bullet}\left (C^{\bullet}\left (G_{K_n},V\otimes \BrdA \right )\xrightarrow{\Ph-1}
C^{\bullet}\left (G_{K_n},V\otimes \BrdA \right )
\right ).
\end{equation}
Consider the diagram 
\begin{equation}
\label{diagram with K(V)}
\xymatrix{
C^{\bullet}(G_{K_n},V)\ar[r]^{\beta_V} &K^{\bullet}(K_n,V)\\
& C^{\bullet}_{\Ph,\g_n}(V) \ar[u]^{\alpha_V}}
\end{equation}
where the maps $\alpha_V$ and $\beta_V$  are constructed as follows. 
Let 
$
\alpha_{V,\g}\,:\,C^{\bullet}_{\g_n}(V)\rightarrow  C^{\bullet}(G_{\Qp},V\otimes \BrdA)
$
be the morpism defined by 
\begin{eqnarray}
\alpha_{V,\g} (x_0)=x_0,\qquad\text{\rm if $ x_0\in C^0_{\g_n}(V)$},\nonumber\\ 
\alpha_{V,\g} (x_1) (g)=\frac{g-1}{\g_n-1} (x_1), \qquad \text{\rm if 
$x_1\in C^1_{\g_n}(V)$}\nonumber.
\end{eqnarray}

Then $\alpha_V$ is the map induced by $\alpha_{V,\g}$ by passing
to total complexes.
The map $\beta_V$ is defined by
\begin{eqnarray}
&&\beta_V\,:\,C^{\bullet}(G_{K_n},V)\rightarrow K^{\bullet}(K_n,V),\nonumber\\
&&x_n\mapsto (0,x_n),\qquad x_n\in C^n(G_{K_n},V).\nonumber
\end{eqnarray}

Let $\RG (K_n,V)$ and $\RG (K_n,\Ddagrig (V))$ denote the images of 
complexes 
\linebreak
$C^{\bullet}(G_{K_n},V)$ and $C^{\bullet}_{\Ph,\g_n}(\Ddagrig (V))$ 
in the derived category $\mathrm D^{\mathrm b}(A)$ of $A$-modules. 

\begin{myproposition}
i) In the diagram \eqref{diagram with K(V)} the maps
$\alpha_V$ and $\beta_V$ are quasi isomorphisms and therefore 
\begin{equation}
\label{derived version for comparision iso}
\RG (K_n,V) \iso \RG (K_n,\Ddagrig (V))
\end{equation}
in $\mathrm D^{\mathrm b}(A).$
\end{myproposition}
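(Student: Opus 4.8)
The plan is to establish the two assertions separately --- that $\beta_V$ is a quasi-isomorphism and that $\alpha_V$ is a quasi-isomorphism --- after which \eqref{derived version for comparision iso} is immediate: both maps have target $K^{\bullet}(K_n,V)$, so in $\mathrm D^{\mathrm b}(A)$ the zig-zag $\RG(K_n,V)\xrightarrow{\beta_V}K^{\bullet}(K_n,V)\xleftarrow{\alpha_V}\RG(K_n,\Ddagrig(V))$ consists of invertible arrows, and \eqref{derived version for comparision iso} is the resulting composite.

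The map $\beta_V$ is the formal one. By the definition \eqref{definition of K(V)}, $K^{\bullet}(K_n,V)$ is the mapping fibre of $\Ph-1$ acting on $C^{\bullet}(G_{K_n},V\otimes\BrdA)$, and $\beta_V$ is the canonical morphism from the kernel of $\Ph-1$ into that fibre. Tensoring \eqref{exact sequence for BrdA} with $V$, which is projective hence flat over $A$, and passing to continuous cochains gives --- as recorded just above \eqref{definition of K(V)} --- a short exact sequence of complexes in which $\Ph-1$ is surjective in every degree with kernel $C^{\bullet}(G_{K_n},V)$. I would then deduce that $\beta_V$ is a quasi-isomorphism by comparing, in each degree, the long exact sequence of the fibre triangle with the long exact cohomology sequence of that short exact sequence of complexes: both present $H^{i}(K^{\bullet}(K_n,V))$ as an extension of the kernel of $(\Ph-1)_{*}$ on $H^{i}(G_{K_n},V\otimes\BrdA)$ by the cokernel of $(\Ph-1)_{*}$ on $H^{i-1}(G_{K_n},V\otimes\BrdA)$, and this identification is the one induced by $\beta_V$.

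The substantive point is that $\alpha_V$ is a quasi-isomorphism, and I would treat it as the cochain-level incarnation of the comparison isomorphism $H^{*}(K_n,V)\iso H^{*}(K_n,\Ddagrig(V))$ of statement v) of Theorem~\ref{cohomology in families} (equivalently, of the comparison statement in Section~1.2), arguing as in Herr's treatment of local Galois cohomology \cite{H1}, \cite{H2} and its Robba-ring and family refinements \cite{Li}, \cite{Po13}, \cite{KPX}. First I would feed the extension $1\to H_{\BQ_p}\to G_{K_n}\to \Gal(\Qp^{\cyc}/K_n)\to 1$ into the Hochschild--Serre spectral sequence; using that $(V\otimes\BrdA)^{H_{\BQ_p}}=\widetilde{\mathbf{D}}^{\dagger}_{\mathrm{rig},A}(V)$, that $V\otimes\BrdA$ has no higher $H_{\BQ_p}$-cohomology (the Tate--Sen method and Berger's construction, see \cite{Ber02}), and that the finite prime-to-$p$ part $\Delta$ of $\Gal(\Qp^{\cyc}/K_n)$ contributes nothing, one identifies $C^{\bullet}(G_{K_n},V\otimes\BrdA)$ in $\mathrm D^{\mathrm b}(A)$ with the complex $C^{\bullet}_{\g_n}(\widetilde{\mathbf{D}}^{\dagger}_{\mathrm{rig},A}(V))$ computing the continuous cohomology of the procyclic group generated by $\g_n$. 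This identification is $\Ph$-equivariant, so totalising $\Ph-1$ identifies $K^{\bullet}(K_n,V)$ with $C^{\bullet}_{\Ph,\g_n}(\widetilde{\mathbf{D}}^{\dagger}_{\mathrm{rig},A}(V))$, and a check of the formulas defining $\alpha_{V,\g}$ shows that under this identification $\alpha_V$ becomes the morphism induced by the natural inclusion $\Ddagrig(V)\hookrightarrow\widetilde{\mathbf{D}}^{\dagger}_{\mathrm{rig},A}(V)$. Finally, this inclusion induces a quasi-isomorphism $C^{\bullet}_{\Ph,\g_n}(\Ddagrig(V))\iso C^{\bullet}_{\Ph,\g_n}(\widetilde{\mathbf{D}}^{\dagger}_{\mathrm{rig},A}(V))$ --- the decompletion, or overconvergence, input, which goes back to Cherbonnier--Colmez and Berger and is available over affinoid bases by \cite{KPX}, \cite{Po13} --- whence $\alpha_V$ is a quasi-isomorphism.

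The hard part is entirely in the treatment of $\alpha_V$, and it is twofold: it rests on the two deep inputs just quoted (the vanishing of the positive-degree $H_{\BQ_p}$-cohomology of $V\otimes\BrdA$, and the decompletion quasi-isomorphism for the Fontaine--Herr complex), and it requires the delicate bookkeeping of verifying that the formula $\alpha_{V,\g}(x_1)(g)=\frac{g-1}{\g_n-1}(x_1)$ --- with its division by $\g_n-1$ in the Iwasawa algebra of the procyclic group --- does compute the edge morphism of the Hochschild--Serre spectral sequence, so that $\alpha_V$ literally realises the comparison map. If one prefers to bypass the second point, it suffices, once $\beta_V$ is known to be a quasi-isomorphism, to check that $\alpha_V$ induces isomorphisms on all cohomology groups, for which one may invoke statement v) of Theorem~\ref{cohomology in families} after identifying the induced maps; $\beta_V$ itself is then the only piece needing an independent argument, and it is a purely homological consequence of the exactness of \eqref{exact sequence for BrdA}.
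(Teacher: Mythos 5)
The paper offers no argument of its own here---the proof is just the citation of \cite{Ben14}, Proposition A.1---and your outline is exactly the argument that reference carries out: $\beta_V$ is the formal consequence of the exactness of the cochain version of \eqref{exact sequence for BrdAr}--\eqref{exact sequence for BrdA} (kernel of a degreewise surjection maps quasi-isomorphically to its mapping fibre), while $\alpha_V$ rests on the concentration of the $H_{\BQ_p}$-cohomology of $V\otimes\BrdA$ in degree $0$, where it equals $(V\otimes\BrdA)^{H_{\BQ_p}}$, together with the overconvergence/decompletion quasi-isomorphism for the Fontaine--Herr complex. One caution: the shortcut you offer at the end---invoking Theorem~\ref{cohomology in families}~v) once $\beta_V$ is known---does not by itself establish that $\alpha_V$ is a quasi-isomorphism, since an abstract isomorphism between the two cohomology groups says nothing about the particular map induced by $\alpha_V$; the bookkeeping identifying $\alpha_{V,\g}$ with the edge map of Hochschild--Serre (your main line of argument) cannot be dispensed with.
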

\begin{proof} See \cite{Ben14}, Proposition A.1.
\end{proof}

\subsection{Iwasawa cohomology}
Let $\CO_E$ denote the ring of integers of the field of 
coefficients $E.$ We 
 equip the Iwasawa algebra $\Lambda=\CO_E[[\Gamma_1]]$ with the 
involution $\iota \,:\,\Lambda \rightarrow \Lambda$ defined by $\iota (g)=g^{-1}$, $g\in \Gamma_1.$
Let $V$ be a $p$-adic representation of $G_{\Qp}$ with coefficients in $E.$
 Fix a $\CO_E$-lattice 
$T$ of $V$ stable under the action of $G_{\Qp}.$
The induced module $\Ind_{K_\infty/\Qp}(T)=\Lambda \otimes_{\CO_E} T$
is equipped with the diagonal action of $G_{\Qp}$ and the natural structure
of a  $\Lambda$-module given by $\lambda *m=\iota (\lambda)m$ (see for example
\cite{NeSC}, Chapter 8. 
To fix these structures we will write  $\Ind_{K_\infty/\Qp}(T)=(\Lambda \otimes_{\Zp} T)^{\iota}.$
Let $\RG_{\Iw}(\Qp,T)$ denote the class of the complex 
$C^\bullet (G_{\Qp},\Ind_{K_\infty/\Qp}(T))$ in the derived category 
$D^{\mathrm b}(\Lambda).$ 
Define
\[
H^i_{\Iw}(\Qp,T)=\R^i\Gamma_{\Iw}(\Qp,T),\qquad i\in \mathbf N.
\]
From Shapiro's lemma it follows that there are canonical and functorial isomorphisms in
$\mathrm D^{\mathrm b}(\Zp[G_n])$ where $G_n=\Gal (K_n/\Qp)$ 
\begin{eqnarray}
&&H^i_{\Iw}(\Qp,T)\simeq\underset{\text{\rm cores}_{K_n/K_{n-1}}}\varprojlim H^i(K_n,T),\nonumber\\
&&\RG_{\Iw}(\Qp,T)\otimes^{\mathbf L}_{\Lambda}\Zp[G_n]\simeq
\RG (K_n,T). \nonumber 
\end{eqnarray}

We review the computation of Iwasawa cohomology in terms of $(\Ph,\Gamma)$-modules.
It was found by Fontaine (unpublished but see \cite{CC2}). Let
\[
\psi \,:\,\mathscr E^{\dagger}_E \rightarrow \mathscr E^{\dagger}_E
\]
denote the operator
\[
\psi (f(X))=\frac{1}{p}\text{\rm Tr}_{\mathscr E^{\dagger}_E/\Ph(\mathscr E^{\dagger}_E)}(f(X)).
\]
More explicitly, the polynomials 
$1$, $(1+X),\ldots ,(1+X)^{p-1}$ form a basis of $\mathscr E^{\dagger}_E$ over $\Ph(\mathscr E^{\dagger}_E)$
and one has 
\[
\psi \left (\underset{i=0}{\overset{p-1}\sum} \Ph (f_i) (1+X)^i\right )=f_0.
\]
In particular, $\psi\circ \Ph=\text{\rm id}.$
Let $e_1,\ldots ,e_d$ be a base of $\bD^{\dagger}(V)$ over $\mathscr E^{\dagger}_E.$
Then  $\Ph (e_1),\ldots ,\Ph (e_d)$ is again a base of $\bD^{\dagger}(V)$ and we define
\[
\psi \,:\,\bD^{\dagger}(V)\rightarrow \bD^{\dagger}(V)
\]
by $\psi\left (\underset{i=1}{\overset{d}\sum} a_i\Ph(e_i)\right )=
\underset{i=1}{\overset{d}\sum} \psi(a_i)e_i.$ We remark that this
definition extends to $(\Ph,\Gamma)$-modules over the Robba ring.

Consider the complex
\[
C^{\bullet}_{\Iw}(\bD^{\dagger}(T))\,:\, \bD^{\dagger}(T)^{\Delta} 
\xrightarrow{\psi-1}\bD^{\dagger}(T)^{\Delta}
\]
where the first term is placed in degree $0$ and denote by
$\RG_{\Iw}(\Ddagrig (T))$ the corresponding object in 
$\mathrm D^{\mathrm b}(\Lambda).$ 

\begin{mytheorem}  The complexes $\RG_{\Iw}(\Qp,T),$ 
$\RG_{\Iw}(\Ddagrig (T))$
and 
\linebreak
$\RG (\Qp,\bD^{\dagger}(\Ind_{K_\infty/\Qp}T))$
are isomorphic in $\mathrm D^{\mathrm b}(\Lambda).$
\end{mytheorem}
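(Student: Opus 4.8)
The plan is to prove the isomorphism in two steps, matching the three complexes pairwise. The middle complex $\RG(\Qp,\bD^{\dagger}(\Ind_{K_\infty/\Qp}T))$ is the bridge. First I would identify $\RG_{\Iw}(\Qp,T)$ with $\RG(\Qp,\bD^{\dagger}(\Ind_{K_\infty/\Qp}T))$ using the comparison isomorphism between continuous Galois cohomology and Fontaine--Herr cohomology of the associated \'etale $(\Ph,\Gamma)$-module over $\mathscr E_E^{\dagger}$ (Theorem on comparison, part 6 of the properties list, together with the Cherbonnier--Colmez theorem). The point is that $\Ind_{K_\infty/\Qp}(T)=(\Lambda\otimes_{\CO_E}T)^{\iota}$ is a $p$-adic representation of $G_{\Qp}$ with coefficients in $\Lambda$ (after inverting $p$, an affinoid-type coefficient ring, or by passing to finite levels $\Zp[G_n]$ and taking limits), so the comparison isomorphism $H^*(\Qp,-)\iso H^*(\bD^{\dagger}(-))$ applies functorially and is compatible with the $\Lambda$-action; passing to the derived category and using that everything is a limit of the finite-level isomorphisms $\RG_{\Iw}(\Qp,T)\otimes^{\mathbf L}_{\Lambda}\Zp[G_n]\simeq \RG(K_n,T)$ gives the first isomorphism in $\mathrm D^{\mathrm b}(\Lambda)$.

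The second and more substantial step is to identify $\RG(\Qp,\bD^{\dagger}(\Ind_{K_\infty/\Qp}T))$ with $\RG_{\Iw}(\Ddagrig(T))$, i.e.\ with the two-term $\psi$-complex $[\bD^{\dagger}(T)^{\Delta}\xrightarrow{\psi-1}\bD^{\dagger}(T)^{\Delta}]$. Here I would invoke Fontaine's description of Iwasawa cohomology (as recorded in \cite{CC2}): the key is the Shapiro-type identification $\bD^{\dagger}(\Ind_{K_\infty/\Qp}T)\simeq \bD^{\dagger}(T)\otimes_{\CO_E}\Lambda^{\iota}$ as $(\Ph,\Gamma)$-modules, under which the Fontaine--Herr complex $C^{\bullet}_{\Ph,\g_n}$ of the induced module becomes, after taking $\Delta$-invariants and using that $\Gamma_0$ acts on $\Lambda$ through multiplication, quasi-isomorphic to a complex built from $\Ph-1$ alone on $\bD^{\dagger}(T)^{\Delta}$. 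The operator $\psi$ enters because $\psi$ is a left inverse to $\Ph$ with $\psi\circ\Ph=\mathrm{id}$, and the standard lemma (Herr, Cherbonnier--Colmez) says the complex $[\bD\xrightarrow{\Ph-1}\bD]$ and $[\bD\xrightarrow{\psi-1}\bD]$ compute the same (shifted) cohomology up to the $\g_n-1$ direction, which on the induced module is absorbed into the $\Lambda$-structure. Concretely I would write $C^{\bullet}_{\Ph,\g_n}(\bD^{\dagger}(T)\otimes\Lambda^{\iota})$ as a total complex in the $\Ph-1$ and $\g_n-1$ directions, observe that the $\g_n-1$ direction is acyclic after the identification $\Lambda\simeq \CO_E[[X]]$ (as $\g_n-1$ becomes, up to unit, multiplication by a nonzerodivisor), and reduce to the $\psi-1$ complex via the quasi-isomorphism $\Ph-1\sim-(\psi-1)$ on the $\psi=0$ part combined with a snake-lemma argument, exactly as in \cite{CC2}.

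The main obstacle I anticipate is making the limit/coefficient-ring bookkeeping rigorous: the comparison isomorphisms of Section 1.2 are stated for $p$-adic representations with coefficients in $E$ (or an affinoid $A$), whereas $\Lambda$ is a large, non-noetherian coefficient ring, so one must either descend to finite levels $\Zp[G_n]$ (where all the cited results apply verbatim) and pass to the inverse limit $\R\varprojlim$, checking Mittag-Leffler/derived-limit vanishing so that the derived inverse limit computes what one wants, or else verify directly that the functor $\bD^{\dagger}$ and the Herr complexes commute with the relevant completed tensor products. A secondary technical point is the compatibility of the $\Lambda$-module structure (twisted by $\iota$) across all three complexes: one has to track the involution carefully so that the canonical structure $\lambda * m=\iota(\lambda)m$ on the induced module matches the natural $\Gamma_0$-action on $\bD^{\dagger}(T)^{\Delta}$ used to define the $\CH$- (resp.\ $\Lambda$-) structure on $\RG_{\Iw}(\Ddagrig(T))$. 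Once these bookkeeping issues are settled, each individual quasi-isomorphism is either cited (Cherbonnier--Colmez, Herr) or a routine homological-algebra verification with the explicit complexes.
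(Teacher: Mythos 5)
Your proposal takes essentially the same route as the paper, whose proof of this theorem consists entirely of the citation to \cite{CC2} (for the cohomology-level statement) and to the appendix of \cite{Ben14} (for the derived version): namely Shapiro's lemma in the form $\bD^{\dagger}(\Ind_{K_\infty/\Qp}T)\simeq\bD^{\dagger}(T)\otimes\Lambda^{\iota}$, the comparison between continuous Galois cohomology and Fontaine--Herr complexes, and the reduction of the $(\Ph-1,\gamma-1)$ total complex to the $\psi-1$ complex via the decomposition $\bD=\Ph(\bD)\oplus\bD^{\psi=0}$ and the invertibility of $\gamma-1$ on $(\bD^{\psi=0})^{\Delta=0}$. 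Your identified obstacle (the $\Lambda$-coefficient bookkeeping versus the affinoid-coefficient statements of Section 1) is exactly what the cited derived version addresses; the only imprecision is your parenthetical claim that the $\gamma_n-1$ direction is ``acyclic'' --- it is not acyclic but has cohomology concentrated in degree one, which is precisely how Shapiro's lemma returns $\bD^{\dagger}(T)^{\Delta}$ with its residual $\Lambda$-structure --- but since you defer the actual mechanism to the snake-lemma argument of \cite{CC2}, this does not affect the correctness of the outline.
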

\begin{proof} See \cite{CC2} and \cite{Ben14} for the derived version.
\end{proof}

The analog of previous results for $(\Ph,\Gamma)$-modules over the Robba ring
was obtained by Pottharst \cite{PoCIT}. 
We start with some preliminary results about coadmissible $\CH$-modules 
and the Grothedieck duality. 
 Let $B(0,1)=\{z\in \mathbf C_p \mid \vert z\vert_p<1\}$
denote the open unit disc.
 Define 
\[
\CH=\left \{f(X)\in E [[X]] \mid \text{\rm $f(X)$ converges 
on $B(0,1)$}\right \}.
\]
In the context of Iwasawa theory the ring $\CH$  appeared in \cite{PR94}.
The natural description $\CH$ is as follows. Write
\[
B(0,1)=\underset{n}\cup W_n
\]
where  $W_n=\{z\in \mathbb C_p\mid \vert z\vert_p\leqslant p^{-1/n}\}$
is the  closed disc of radii $p^{-1/n}<1$ which we consider as an affinoid space. 
Then
\[
\Gamma (W_n,\mathscr O_{W_n})= 
\left \{f(X)=\underset{k=0}{\overset{\infty}\sum}a_kX^k
\mid \text{\rm $\vert a_k\vert_p p^{-k/n}\to 0$ when $k\to +\infty$}\right \}
\]
and $\CH=\underset{n}\varprojlim \Gamma (W_n,\mathscr O_{W_n}).$
We consider $\Lambda$ and $\Lambda \otimes_{\CO_E}E$
as subalgebras of $\CH.$  To simplify notation set $\CH_n=\Gamma (W_n,\mathscr O_{W_n}).$ It is easy to see that $\CH_n$ are euclidian rings. 
A {\it coadmissible} $\CH$-module $M$ 
is the inverse limit of a system $(M_n)_{n\geqslant 1}$ where each
$M_n$ is a finitely generated $\CH_n$-module and the natural maps
$M_n\otimes_{\CH_n}\CH_{n-1} \rightarrow M_{n-1}$ are  isomorphisms 
\cite{ST}. The structure of admissible modules is given by the following 
proposition (\cite{PoCIT}, Proposition 1.1).

\begin{myproposition}
\label{properties coadmissible modules}
 i)  A coadmissible $\CH$-module is torsion free if and only if it is a finitely generated $\CH$-module.

ii) Let $M$ be a coadmissible torsion $\CH$-module. Then 
\begin{equation}
M\simeq \underset{i\in I}\prod \CH/\mathfrak p_i^{n_i}
\end{equation}
where $(\mathfrak p_i)_{i\in I}\subset \underset{n\geqslant 1}\cup\text{\rm Spec}(\CH_n)$ is a system of maximal ideals 
such that for each $n$ there are only finitely many $i$ with 
$\mathfrak p_i\in \text{\rm Spec}(\CH_n).$
\end{myproposition}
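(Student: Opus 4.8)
The plan is to reduce both statements to elementary divisor theory over each euclidean ring $\CH_n$ and then to control the transition maps. I will freely use the Fr\'echet--Stein formalism (Schneider--Teitelbaum \cite{ST}): the $\CH_n$ are noetherian, the restriction maps $\CH_n\to\CH_{n-1}$ are flat, the assignment $M\mapsto M_n$ on coadmissible modules is exact with $M_n\simeq\CH_n\otimes_{\CH}M$ and $M=\varprojlim_n M_n$, and $\CH$ is a B\'ezout domain by Lazard's theory \cite{La62}. Since each $\CH_n$ is a principal ideal domain, every finitely generated $\CH_n$-module $M_n$ decomposes as $\CH_n^{r_n}\oplus T_n$ with $T_n$ its torsion submodule and $T_n$ a finite direct sum of cyclic modules $\CH_n/\mathfrak q^{\nu}$, $\mathfrak q$ maximal.

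\textbf{Part (i).} Applying flatness of $\CH_n\to\CH_{n-1}$ to $0\to T_n\to M_n\to\CH_n^{r_n}\to 0$ identifies $T_n\otimes_{\CH_n}\CH_{n-1}$ with a torsion submodule of $M_{n-1}$ having free quotient, hence with $T_{n-1}$, and shows $r_{n-1}=r_n=:r$. Thus the $T_n$ form a coadmissible module $T=\varprojlim_n T_n$, which by exactness of $M\mapsto M_n$ is the torsion submodule of $M$, and $(M/T)_n\simeq\CH_n^{r}$ for every $n$. If $M$ is torsion free then $T=0$, so $T_n\simeq\CH_n\otimes_{\CH}T=0$ and each $M_n$ is free of rank $r$; since the minimal number of $\CH_n$-generators of $M_n$ is then bounded, $M$ is finitely generated over $\CH$, and being finitely generated and torsion free over the B\'ezout domain $\CH$ it is free of rank $r$. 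Conversely a finitely generated free $\CH$-module is obviously torsion free.

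\textbf{Part (ii).} Suppose now $M$ is coadmissible and torsion, so each $M_n\simeq\bigoplus_{\mathfrak q}\CH_n/\mathfrak q^{\nu_{n,\mathfrak q}}$ with $\mathfrak q$ running over a finite set of maximal ideals of $\CH_n$. The crucial geometric input is that the zero locus of a maximal ideal $\mathfrak q$ of $\CH_n$ is a Galois orbit inside $W_n\subset W_m$; it therefore defines a maximal ideal of $\CH_m$ for all $m\geqslant n$, hence an element of $\bigcup_m\text{\rm Spec}(\CH_m)$, i.e. a maximal ideal $\mathfrak p$ of $\CH$ with $\mathfrak p\in\text{\rm Spec}(\CH_{n(\mathfrak p)})$ for a minimal index $n(\mathfrak p)\leqslant n$; and restriction $\CH_n\to\CH_{n-1}$ carries $\CH_n/\mathfrak q^{\nu}$ to $\CH_{n-1}/\mathfrak q^{\nu}$ when that orbit meets $W_{n-1}$ and to $0$ otherwise. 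Comparing with $M_n\otimes_{\CH_n}\CH_{n-1}\simeq M_{n-1}$ shows that for each such $\mathfrak p$ the multiplicity of $\CH_n/\mathfrak p^{\nu}$ in $M_n$ is independent of $n\geqslant n(\mathfrak p)$, so there is a well-defined exponent $n_{\mathfrak p}$ and a $\CH/\mathfrak p^{n_{\mathfrak p}}$-summand. The summand attached to $\mathfrak p$ appears only in $M_n$ with $n\geqslant n(\mathfrak p)$, so passing to the inverse limit of the level-wise decompositions turns the direct sums into the product $\prod_i\CH/\mathfrak p_i^{n_i}$, and the condition that $M_n$ be finitely generated for each $n$ is precisely the asserted local finiteness (only finitely many $\mathfrak p_i$ with $n(\mathfrak p_i)\leqslant n$).

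\textbf{Main obstacle.} The per-level elementary divisor bookkeeping is routine; the substance lies in (a) the passage, in part (i), from ``each $M_n$ free of bounded rank'' to ``$M$ finitely generated over the non-noetherian ring $\CH$'', which genuinely requires the coadmissibility formalism rather than a naive inverse limit of bases, and in (b) the identification in part (ii) of maximal ideals across the levels $\CH_n$ with maximal ideals of $\CH$ and the verification that the inverse limit of the finite direct sums is the \emph{product} with local finiteness, not the direct sum.
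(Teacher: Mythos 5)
Your proof is correct, and it fills in what the paper leaves entirely to the reader: the paper's own ``proof'' is a one-line deferral to Lazard's theory (the statement itself being quoted from \cite{PoCIT}, Proposition 1.1), so there is no detailed argument to compare against. Your route --- the Fr\'echet--Stein formalism, elementary divisor theory over the principal ideal domains $\CH_n$, the identification of maximal ideals across the levels $W_{n-1}\subset W_n$, and the bounded-generators criterion for finite generation of a coadmissible module --- is exactly the standard one that the citation gestures at, and the bookkeeping in part (ii) (summands indexed by $\mathfrak p$ appearing only for $n\geqslant n(\mathfrak p)$, whence a product rather than a direct sum in the limit) is carried out correctly. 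Two small points. First, in part (i) you assert without argument that $T=\varprojlim_n T_n$ \emph{is} the torsion submodule of $M$; the implication you actually use ($T\neq 0$ forces $M$ to have a nonzero torsion element) deserves a word --- for a maximal ideal $\mathfrak q$ in the support of some $T_{n_0}$ the $\mathfrak q$-primary components of the $T_n$ stabilize for $n\geqslant n_0$, so $T$ contains a nonzero element killed by a power of the corresponding maximal ideal of $\CH$ --- while the full equality $T=M_{\mathrm{tor}}$ needs Lazard's existence of functions with prescribed locally finite divisors. Second, the ``if and only if'' in part (i) cannot be read literally, since $\CH/\mathfrak p$ is finitely generated, coadmissible and torsion; what you prove, namely that torsion free is equivalent to finitely generated \emph{and free}, is the correct (Pottharst's) form of the assertion, and you were right to prove the converse only for free modules.
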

\begin{proof} The proof follows easily from the theory of Lazard 
\cite{La62}.
\end{proof}

Let $D^{\mathrm b}_{\mathrm{coad}}(\CH)$ denote the category of 
bounded complexes of $\CH$-modules with coadmissible cohomology. 
Let $\mathscr K$ de note the field of fractions of $\CH.$ 
Consider the  complex 
\[
\omega=\mathrm{cone} \left [\mathscr K\rightarrow \mathscr K/\CH \right ] [-1]
\]
and for any object $C^{\bullet}$ of $D^{\mathrm b}_{\mathrm{coad}}(\CH)$ define 
\[
\mathscr D (C^{\bullet})=\Hom_{\CH} (C^{\bullet}, \omega).
\]
Then $\mathscr D\,:\,D^{\mathrm b}_{\mathrm{coad}}(\CH) \rightarrow 
D^{\mathrm b}_{\mathrm{coad}}(\CH)$ is an anti involution which can be seen 
as the "limit" of Grothendieck dualizing  functors 
$\R\Hom (-,\CH_n).$ For any coadmissible module $M$ let $\mathscr D^k(M)$ denote the $k$th cohomology group of  $\mathscr D([M]).$  Then 
\begin{eqnarray}
&&\mathscr D^0(M)=\Hom_{\CH}(M/M_{\mathrm{tor}}, \CH),
\qquad \mathscr D^1 \left(\underset{i\in I}\prod \CH/\mathfrak p_i^{n_i}\right )=
\underset{i\in I}\prod \,\mathfrak p_i^{-n_i}/\CH,\nonumber
\end{eqnarray}
and $\mathscr D^k(M)=0$ for all $k \geqslant 2$ ( see \cite{PoCIT}, Section 1).

Let $\bD$ be a $(\Ph,\Gamma)$-module over $\CR_E.$ Consider the complexes
\[
C^{\bullet}_{\Iw} (\bD)\,:\,\bD^{\Delta}\xrightarrow{\psi-1}\bD^{\Delta}
\]
and $C^{\bullet}_{\Ph,\g}(\overline\bD)$ where $\overline\bD=
\bD \otimes_{\Qp}\CH^{\iota}$ and denote by $\RG_{\Iw}(\bD)$
and $\RG (\Qp, \overline\bD)$ the corresponding objects of the derived category
$D^{\mathrm b}_{\mathrm{coad}}(\CH).$

\begin{mytheorem} 
\label{Iwasawa cohomology of phi-gamma mod}
Let $\bD$ be a $(\Ph,\Gamma)$-module over $\CR_E.$ Then 

i) $H^i_{\Iw}(\bD)$ ($i=1,2$) are coadmissible $\CH$-modules.
Moreover $H^1_{\Iw}(\bD)_{\text{\rm tor}}$ and 
$H^2_{\Iw}(\bD)$ are finite dimensional $E$-vector spaces. 

ii) The complexes $C^{\bullet}_{\Iw}(\bD)$ and $C^{\bullet}_{\Ph,\g}(\overline\bD)$ 
are quasi isomorphic and therefore 
\[
\RG_{\Iw}(\bD) \simeq  \RG (\Qp, \overline\bD).
\]

iii) One has an isomorphism 
\[ 
C^{\bullet}_{\Ph,\g}(\overline\bD)\otimes_{\CH}E \iso C^{\bullet}_{\Ph,\g}(\bD)
\]
which induces Hochschild-Serre exact sequences
\[
0\rightarrow H^i_{\Iw}(\bD)_{\Gamma} \rightarrow 
H^i(\bD) \rightarrow 
H^{i+1}_{\Iw}(\bD)^{\Gamma}\rightarrow 0.
\]

iv) One has a canonical duality in $D^{\mathrm b}_{\mathrm{coad}}(\CH)$
\[
\mathscr D\RG_{\Iw} (\bD) \simeq \RG_{Iw}(\bD^*(\chi))^{\iota}[2].
\]

iv) Let $V$ be a $p$-adic representation of $G_{\Qp}.$  There exist 
canonical and functorial  isomorphisms 
\[
\RG_{\Iw}(\Qp,T)\otimes_{\Lambda}^{\mathbf L} \CH \simeq
\RG (\Qp,T\otimes_{\Zp}\CH^{\iota})
\simeq 
\RG (\Qp, \overline\bD).
\]
\end{mytheorem}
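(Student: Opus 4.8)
The plan is to prove Theorem \ref{Iwasawa cohomology of phi-gamma mod} by assembling the pieces already available in the excerpt, treating each of the five items in turn and reducing everything to the two basic inputs: the Pottharst–Kedlaya–Xiao finiteness and base-change results of Theorem \ref{cohomology in families}, and the comparison isomorphisms of Proposition 2 (the derived comparison $\RG(K_n,V)\iso\RG(K_n,\Ddagrig(V))$) together with the classical Fontaine computation of Iwasawa cohomology via $\psi-1$. First I would set up the notation $\overline\bD=\bD\otimes_{\Qp}\CH^{\iota}$ and observe that $\CH$ is a Fr\'echet–Stein algebra with the presentation $\CH=\varprojlim_n\CH_n$ already recalled, so that the category $D^{\mathrm b}_{\mathrm{coad}}(\CH)$ and the duality functor $\mathscr D$ are at our disposal; coadmissibility of a cohomology module will always be checked level by level over the euclidean rings $\CH_n$.

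For part ii) I would first prove the quasi-isomorphism $C^{\bullet}_{\Iw}(\bD)\simeq C^{\bullet}_{\Ph,\g}(\overline\bD)$. The key point is that $\overline\bD^{\Delta}\cong\bD^{\Delta}\otimes_{\Qp}\CH^{\iota}$ carries the action of $\Gamma_1$ through $\CH$, and that the total complex $C^{\bullet}_{\Ph,\g}(\overline\bD)=\Tot^{\bullet}(C^{\bullet}_{\g}(\overline\bD)\xrightarrow{\Ph-1}C^{\bullet}_{\g}(\overline\bD))$ can be computed by first taking the $\gamma$-part: the operator $\gamma-1$ acting on $\overline\bD$ becomes, after the identification $\CH\simeq\CH$ sending $\gamma$ to the variable, invertible away from the augmentation, so that $C^{\bullet}_{\g}(\overline\bD)$ is quasi-isomorphic to $\bD^{\psi=?}$—more precisely one uses the standard fact (as in Pottharst, or the $\psi$ vs.\ $\varphi$ dictionary of \cite{CC2}) that on $\bD\otimes\CH^{\iota}$ the complex $[\,\cdot\,\xrightarrow{\Ph-1}\cdot\,]$ is quasi-isomorphic to $[\,\cdot\,\xrightarrow{\psi-1}\cdot\,]$ because $\psi$ is a left inverse of $\Ph$ and $1-\Ph\psi$ is, on the relevant completed module, split by an explicit operator built from $\psi$. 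Carrying this out gives ii). Then part i) follows: by Theorem \ref{cohomology in families}(i) applied to each affinoid $\CH_n=\Gamma(W_n,\mathscr O_{W_n})$, the complexes $C^{\bullet}_{\Ph,\g}(\bD\widehat\otimes_{\CR_E}\CR_{\CH_n})$ are perfect over $\CH_n$, so their cohomology is finitely generated over $\CH_n$, and passing to the inverse limit gives coadmissibility of $H^i_{\Iw}(\bD)$; finiteness of $H^1_{\Iw}(\bD)_{\mathrm{tor}}$ and $H^2_{\Iw}(\bD)$ over $E$ comes from the Euler–Poincar\'e formula of Theorem \ref{cohomology in families}(iii) combined with the Hochschild–Serre sequences of iii), exactly as in the \'etale case—the total $\CH$-rank is forced to be concentrated in $H^1$.

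For part iii) I would use Theorem \ref{cohomology in families}(ii), the base-change isomorphism $C^{\bullet}_{\Ph,\g_n}(\bD)\otimes^{\mathbf L}_{\CR_A}\CR_B\iso C^{\bullet}_{\Ph,\g_n}(\bD\widehat\otimes_{\CR_A}\CR_B)$, specialised to $A=\CH$, $B=E$ (the augmentation $\CH\to E$), which immediately yields $C^{\bullet}_{\Ph,\g}(\overline\bD)\otimes^{\mathbf L}_{\CH}E\iso C^{\bullet}_{\Ph,\g}(\bD)$; since $E$ has Tor-dimension $1$ over $\CH$ with $\Gamma_1$-coinvariants/invariants as the two Tor groups, the universal coefficients spectral sequence collapses to the short exact sequences displayed. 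For part iv) (the duality) I would combine the level-$n$ Grothendieck duality of Theorem \ref{cohomology in families}(iv), $C^{\bullet}_{\Ph,\g_n}(\bD)\iso\R\Hom_A(C^{\bullet}_{\Ph,\g_n}(\bD^*(\chi)),A)[-2]$, with the description of $\mathscr D$ as the ``limit'' of the functors $\R\Hom(-,\CH_n)$; the twist by $\iota$ appears because dualising $\CH^{\iota}$ returns $\CH^{\iota}$ with the inverse action, and the shift $[2]$ is the local duality shift. Finally part v) is the concatenation of two comparisons: the first isomorphism $\RG_{\Iw}(\Qp,T)\otimes^{\mathbf L}_{\Lambda}\CH\simeq\RG(\Qp,T\otimes_{\Zp}\CH^{\iota})$ is Shapiro's lemma plus flat base change along $\Lambda\to\CH$ (using that $\CH$ is flat over $\Lambda$, e.g.\ by Lazard's theory), and the second, $\RG(\Qp,T\otimes_{\Zp}\CH^{\iota})\simeq\RG(\Qp,\overline\bD)$, is the derived comparison isomorphism of Proposition 2 applied with coefficient ring $A=\CH$ to the representation $T\otimes_{\Zp}\CH^{\iota}$, whose associated $(\Ph,\Gamma)$-module is $\DdagrigA(T\otimes\CH^{\iota})=\Ddagrig(V)\otimes_{\Qp}\CH^{\iota}=\overline\bD$ by the base-change compatibility of $\DdagrigA$.

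The main obstacle I expect is part ii), specifically the passage between the $\psi$-complex $C^{\bullet}_{\Iw}(\bD)$ (two terms, in degrees $0$ and $1$) and the four-term-looking total complex $C^{\bullet}_{\Ph,\g}(\overline\bD)$: one must produce an explicit homotopy identifying $[\bD^{\Delta}\xrightarrow{\psi-1}\bD^{\Delta}]$ with the $\Gamma_1$-cohomology of the Fontaine exact sequence $0\to \bD^{\psi=0}\to\bD\xrightarrow{\psi-1}?$, and the delicate analytic point is that this requires knowing $\bD^{\psi=0}$ is a free $\CH$-module (a Mellin-transform / Lazard-type statement) of the expected rank, so that $\gamma-1$ behaves well on it. This is where the real content of Pottharst's argument lies, and I would spell it out by reducing, via the $\brigdagr$-resolution $0\to A\to\BrdA\xrightarrow{\Ph-1}\BrdA\to 0$ and a limit over $n$, to the known \'etale statement of \cite{CC2}, then deducing the general case by the fully faithful slope filtration of Kedlaya as in the proof of the finiteness Theorem \ref{cohomology in families}. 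Everything else is formal homological algebra over the Fr\'echet–Stein algebra $\CH$.
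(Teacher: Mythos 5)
Your proposal is correct in outline and follows essentially the same route as the proof the paper points to (Pottharst's Theorem 2.6, resting on the Kedlaya--Pottharst--Xiao finiteness/base-change/duality package applied level by level over the $\CH_n$, plus the $\psi$-versus-$\Ph$ comparison whose real content is that $\gamma-1$ acts invertibly on $\bD^{\psi=0}$ via its Mellin-transform description as a free $\CH(\Gamma)$-module); the paper itself gives no argument beyond this citation. You correctly isolate that last point as the genuine analytic input, so there is nothing substantive to object to beyond the compressed justification of the $E$-finiteness of $H^1_{\Iw}(\bD)_{\mathrm{tor}}$ and $H^2_{\Iw}(\bD)$, which does require the specialization-at-all-twists argument and not just the Euler--Poincar\'e formula.
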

\begin{proof}  See \cite{PoCIT}, Theorem 2.6.
\end{proof}

\subsection{Crystalline extensions}
In this subsection we consider $(\Ph,\Gamma)$-modules over
the Robba ring $\CR_E.$ 
As usual, the first cohomology group $H^1(\Qp, \bD)$ can be interpreted in terms of extensions.
Namely, to any cocycle $\alpha =(a,b) \in Z^1(C_{\Ph,\gamma}(\bD))$ one associates 
the extension
\[
0\rightarrow \bD \rightarrow \bD_{\alpha} \rightarrow \CR_E\rightarrow 0
\]
such that $\bD_{\alpha}=\bD \oplus \CR_Ee$ with $\Ph (e) =e+a$ and $\gamma (e)=e+b.$
This defines a canonical  isomorphism
\[
H^1(\Qp, \bD) \simeq \text{Ext}^1 (\CR_E, \bD).
\]
We say that $\cl (\alpha)\in H^1(\Qp,\bD)$ is crystalline if 
\[
\dim_E \CDcris (\bD_{\alpha})=\dim_E \CDcris (\bD)+1
\]
and define
\[
H^1_f(\Qp,\bD)= \{\cl (\alpha)\in H^1(\Qp,\bD) \,\mid \, \cl (\alpha) \,\,\text{\rm is crystalline}\,\}.
\]
It is easy to see that $H^1_f(\Qp,\bD)$ is a subspace of $H^1(\Qp,\bD).$ If $\bD$ is semistable
(even potentially semistable), the equivalence 
\eqref{equivalence Dst and Dcris for phi-gamma mod} between 
the category of semistable $(\Ph,\Gamma)$-modules and 
filtered $(\Ph,N)$-modules allows to compute 
$H^1_f(\Qp,\bD)$ in terms of $\CDst (\bD).$ This gives a canonical
exact sequence
\begin{equation}
\label{exact sequence with H_f}
0\rightarrow H^0(\Qp, \bD)\rightarrow \CDcris (\bD)\rightarrow 
\CDcris (\bD)\oplus t_{\bD}(\BQ_p) \rightarrow H^1_f(\Qp, \bD)
\rightarrow 0,
\end{equation}
where $ t_{\bD}(\Qp)=\CDst (\bD)/\F^0\CDst (\bD)$ is the tangent 
space of $\bD$ (\cite{Ben11}, Proposition 1.4.4 and \cite{Ne92}, Sections 
1.19-1.21). In particular, one has 
\begin{eqnarray}
\label{dimension of H_f}
&&H^0(\Qp, \bD)=\F^0\CDst (\bD)^{\Ph=1, N=0}, \nonumber\\
&&\dim_E H^1_f(\Qp, \bD) = \dim_E t_{\bD}(\Qp) +\dim_L H^0(\Qp, \bD)   
\end{eqnarray}
(see \cite{Ben11}, Proposition 1.4.4 and Corollary 1.4.5). Moreover, 
$H^1_f(\Qp,\bD)$ and $H^1_f(\Qp, \bD^*(\chi))$
are orthogonal complements  to each other under duality 
\eqref{duality for coh phi-gamma modules} (\cite{Ben11}, Corollary 1.4.10).

We will call exponential map the connecting map in \eqref{exact sequence with H_f}
\begin{equation}
\label{exp for phi-gam mod}
\exp_{\bD}\,\,:\,\, t_{\bD}(\BQ_p) \rightarrow H^1_f(\Qp, \bD).
\end{equation}
Let $V$ be a potentially semistable  representation of $G_{\Qp}.$ 
Let 
\[
t_{V}(\BQ_p)=\Dst (V)/\F^0\Dst (V)
\] 
denote the tangent 
space of $V$ and let  $H^1_f(\Qp,V)$ be  the subgroup of $H^1(\Qp,V)$  defined 
by \eqref{Bloch-Kato local conditions}.
In \cite{BK}, Bloch and Kato constructed a map
\begin{equation}
\label{exp for representations}
\exp_{V}\,\,:\,\, t_{V}(\BQ_p) \rightarrow H^1_f(\Qp, V)
\end{equation}
using the fundamental exact sequence relating the 
rings of $p$-adic periods $\Bc$ and $\Bd$.
From Theorem~\ref{berger theorem}
it follows  that
$t_V(\Qp)$ is canonically isomorphic to $t_{\Ddagrig (V)}(\Qp)$
and that 
\[
H_f^1(\Qp,\Ddagrig (V))\simeq H^1_f(\Qp,V)
\]
(see \cite{Ben11}, Proposition 1.4.2). Moreover 
the diagram
\begin{displaymath}
\xymatrix{
t_{V}(\BQ_p) \ar[rr]^{\exp_V} \ar[d]^{=} & &H^1_f(\Qp, V) \ar[d]^{=}\\
t_{\Ddagrig (V)}(\BQ_p) \ar[rr]^{\exp_{\Ddagrig (V)}}& &H^1_f(\Qp, \Ddagrig (V)).
}
\end{displaymath}
commutes (\cite{Ben14}, Section 2). Therefore, our definition of the
exponential map \eqref{exp for phi-gam mod} agrees with
the Bloch--Kato's one.

\subsection{Cohomology of isoclinic modules}
The results of this section are proved in \cite{Ben11} (see Proposition 1.5.9 and Section 1.5.10 of \textit{op. cit.}).
 Let $\bD$ be semistable $(\Ph,\Gamma)$-module over $\CR_E$ of rank $d$.
Assume  that $\CDst (\bD)^{\Ph=1}=\CDst (\bD)$ and  that 
the all Hodge-Tate weights of $\bD$ are $\geqslant 0.$ Since $N\Ph =p\Ph N$ this implies  that
$N=0$  on $\CDst (\bD)$ and $\bD$ is crystalline.

The canonical map $\bD^{\Gamma} \rightarrow \CDcris (\bD)$ is an isomorphism and therefore  
\[
H^0(\Qp, \bD)\simeq \CDcris (\bD)= \bD^{\Gamma}
\]
is a $E$-vector space of dimension $d.$ 
The Euler-Poincar\'e characteristic formula  gives
\[
\dim_E H^1(\Qp, \bD) =d +\dim_E H^0(\Qp, \bD)+\dim_E H^0(\Q_p,\bD^*(\chi)) =2d.
\] 
On the other hand  $\dim_EH^1_f(\Qp, \bD)=d$ by 
\eqref{dimension of H_f}. 
The group $H^1(\Qp, \bD)$ has the following explicit description. The map 
\begin{eqnarray}
&&i_{\bD} \,:\, \CDcris (\bD) \oplus \CDcris (\bD) \rightarrow  H^1(\Qp, \bD),
\nonumber\\
&&i_{\bD} (x,y) =\cl (-x,\, \log \chi (\gamma)\, y)
\nonumber
\end{eqnarray}
is an isomorphism. (Remark that the sign $-1$ and   $\log \chi (\gamma)$ are normalizing factors.) 
We let denote
$i_{\bD,f}$ and $i_{\bD,c}$ the restrictions of $i_{\bD}$ on the first and second summand respectively.
Then $\im (i_{\bD,f})=H^1_f(\Qp,\bD)$ and we set 
$H^1_c(\Qp, \bD)=\text{Im} (i_{\bD,c}).$ Thus we have a canonical decomposition
\begin{equation}
H^1(\Qp, \bD) \simeq H^1_f(\Q_p, \bD)\oplus H^1_c(\Q_p, \bD).
\end{equation}

Now consider the dual module $\bD^*(\chi)$. It is crystalline, 
\linebreak
$\CDcris (\bD^*(\chi))^{\Ph=p^{-1}}=\CDcris (\bD^*(\chi))$ and 
the  Hodge-Tate weights of $\bD^*(\chi)$ are all $\leqslant 0.$  Let
\[
[\,\,,\,\,]_{\bD}\,:\,\CDcris (\bD^*(\chi))\times \CDcris (\bD) \rightarrow E
\] 
denote the canonical pairing. Define  
\[
i_{\bD^*(\chi)} \,:\, \CDcris (\bD^*(\chi)) \oplus \CDcris (\bD^*(\chi)) \rightarrow  H^1(\Qp, \bD^*(\chi))
\]
by
\[
i_{\bD^*(\chi)} (\alpha,\beta) \cup i_{\bD} (x,y)= [\beta ,x]_{\bD} -[\alpha, y]_{\bD}.
\]
As before, let $i_{\bD*(\chi),\,f}$ and $i_{\bD^*(\chi),\,c}$ denote  the restrictions of $i_{\bD}$ on the first and second summand respectively. From $H^1_f(\Qp, \bD^* (\chi))=H^1_f(\bD)^{\perp}$ it follows  that  $\text{Im}(i_{\bD^*(\chi),\,f})=H^1_f(\Qp, \bD^*(\chi))$
and   we set
\linebreak
 $H^1_c(\Qp, \bD^*(\chi))=\text{Im} (i_{\bD^*(\chi),\,c})$.
Again we have  a  decomposition 
\[
H^1(\Qp, \bD^*(\chi)) \simeq H^1_f(\Qp,\bD^*(\chi))\oplus H^1_c(\Qp,\bD^*(\chi)).
\]

Now we will compute the Iwasawa cohomology of some isoclinic 
modules.
Recall that the   well known computation 
of universal norms of $\Qp (1)$ gives:
\begin{equation}
\label{universal norms of Q(1)}
H^1_{\Iw}(\Qp,\Qp (1))_{\Gamma}=p^{\mathbf Z}
\end{equation}
under the Kummer map. 
The following proposition generalises this 
result to $(\Ph,\Gamma)$-modules.
It will be used in the proof of 
Proposition~\ref{Triviality of H^1_Iw} below.

\begin{myproposition}
\label{coinvariants od isoclinic modules}
 Let $\bD$ be an isoclinic $(\Ph,\Gamma)$-module
such that 
\linebreak
$\CDcris (\bD)^{\Ph=p^{-1}}=\CDcris (\bD)$ and
$\F^0\CDcris (\bD)=0.$ Then
\[
H_{\Iw}^1(\bD)_{\Gamma}=H^1_c(\Qp,\bD).
\]
\end{myproposition}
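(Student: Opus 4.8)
The plan is to compute the $\Gamma$-coinvariants of $\RG_{\Iw}(\bD)$ by means of the Hochschild--Serre spectral sequence of Theorem~\ref{Iwasawa cohomology of phi-gamma mod} iii), which gives a short exact sequence
\[
0\rightarrow H^1_{\Iw}(\bD)_{\Gamma}\rightarrow H^1(\Qp,\bD)\rightarrow H^2_{\Iw}(\bD)^{\Gamma}\rightarrow 0.
\]
So the first step is to identify the image of $H^1_{\Iw}(\bD)_{\Gamma}$ in $H^1(\Qp,\bD)$ as a subspace, and then to show it coincides with $H^1_c(\Qp,\bD)$. For the dimension count, I would use Theorem~\ref{Iwasawa cohomology of phi-gamma mod} i), which says $H^2_{\Iw}(\bD)$ is finite dimensional over $E$; combined with the Euler characteristic for Iwasawa cohomology (or equivalently the duality iv) of that theorem together with the hypothesis $\F^0\CDcris(\bD)=0$), one sees that $H^2_{\Iw}(\bD)$ is in fact the ``expected size''. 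Concretely, under the stated hypotheses $\bD$ is crystalline with nonpositive Hodge--Tate weights, $H^0(\Qp,\bD)=\F^0\CDst(\bD)^{\Ph=1,N=0}=0$ by \eqref{dimension of H_f}, and $\bD^*(\chi)$ has $\CDcris(\bD^*(\chi))^{\Ph=1}=\CDcris(\bD^*(\chi))$ with Hodge--Tate weights $\geqslant 0$; so the discussion of Section~1.8 applies to $\bD^*(\chi)$, giving $\dim_E H^1(\Qp,\bD)=2d$ and $\dim_E H^1_f(\Qp,\bD)=\dim_E H^1_f(\Qp,\bD^*(\chi))=d$ where $d=\dim_E\CDcris(\bD)$.

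The second step is to pin down which $d$-dimensional subspace $H^1_{\Iw}(\bD)_{\Gamma}$ is. The natural guess, generalising \eqref{universal norms of Q(1)}, is that the universal norms avoid the ``exponential'' part $H^1_f$ and land in the complementary $H^1_c$. I would argue this by duality: under the perfect pairing \eqref{duality for coh phi-gamma modules}, $H^1_f(\Qp,\bD)$ and $H^1_f(\Qp,\bD^*(\chi))$ are orthogonal complements, so it suffices to show that $H^1_{\Iw}(\bD)_{\Gamma}$ pairs trivially with $H^1_f(\Qp,\bD^*(\chi))$ — equivalently, that the image of the universal norms is killed by the localisation-type map detecting $H^1_f$. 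Here the mechanism is that an element in $H^1_f(\Qp,\bD^*(\chi))$ extends, after a finite layer, to a class that is ``crystalline'', while a universal norm pairs against it through the corestriction-compatible tower, and the Bloch--Kato dual exponential (the map $H^1(\Qp,\bD^*(\chi))\to t_{\bD^*(\chi)}$ dual to \eqref{exp for phi-gam mod}) annihilates the norm-compatible system for weight reasons — precisely the hypothesis $\CDcris(\bD)^{\Ph=p^{-1}}=\CDcris(\bD)$, which forces the relevant Perrin-Riou-style big exponential / Coleman map to have no ``$f$''-component in the limit. Once $H^1_{\Iw}(\bD)_{\Gamma}\subseteq H^1_f(\Qp,\bD)^{\perp}=H^1_c(\Qp,\bD)$ is established, equality follows by comparing dimensions: both sides have dimension $d$ (for $H^1_c$ this is the decomposition $H^1(\Qp,\bD)=H^1_f\oplus H^1_c$ of Section~1.8; for $H^1_{\Iw}(\bD)_{\Gamma}$ it follows from the exact sequence above together with $\dim_E H^2_{\Iw}(\bD)^{\Gamma}=\dim_E H^1(\Qp,\bD)-d=d$, which in turn uses $H^0(\Qp,\bD)=0$ and the finiteness in Theorem~\ref{Iwasawa cohomology of phi-gamma mod} i) together with duality iv) applied to $\bD^*(\chi)$, whose $H^0_{\Iw}$ controls $H^2_{\Iw}(\bD)$).

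The step I expect to be the real obstacle is the orthogonality assertion $H^1_{\Iw}(\bD)_{\Gamma}\subseteq H^1_c(\Qp,\bD)$ — i.e. showing that universal norms never produce a nonzero crystalline class. In the classical case of $\Qp(1)$ this is the statement that the cyclotomic units have trivial image under the dual exponential, which one proves by an explicit computation with Coleman power series; in the $(\Ph,\Gamma)$-module setting the clean way is to invoke the explicit description of $\RG_{\Iw}(\bD)$ via $\psi-1$ acting on $\bD^{\Delta}$ (Theorem~\ref{Iwasawa cohomology of phi-gamma mod} ii)) together with the characterisation of $H^1_f$ from the exact sequence \eqref{exact sequence with H_f}, and to trace through how the connecting maps interact. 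One must be careful that the hypothesis is $\CDcris(\bD)^{\Ph=p^{-1}}=\CDcris(\bD)$ (slope $-1$), which is exactly the eigenvalue condition making the ``$\Ph=1$ on $\CDcris(\bD^*(\chi))$'' statement hold and hence putting $\bD^*(\chi)$ in the range where the Section~1.8 computations and the nonvanishing of $\log\chi(\gamma)$-normalised classes are valid; getting the signs and normalising factors (the $-1$ and $\log\chi(\gamma)$ in $i_{\bD}$) consistent with the cup-product formulas \eqref{cup product formula} is the kind of bookkeeping that, while routine, is where an error would most likely hide. I would organise the write-up so that the dimension count is isolated first (purely formal, from the Euler characteristic and Hochschild--Serre), and the orthogonality is the single substantive lemma.
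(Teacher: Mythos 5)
Your skeleton — one containment between $H^1_{\Iw}(\bD)_{\Gamma}$ and $H^1_c(\Qp,\bD)$, then equality by a dimension count from the Hochschild--Serre sequence and the finite dimensionality of $H^2_{\Iw}(\bD)$ — is exactly the skeleton of the paper's proof, and your dimension count is essentially right. But the substantive step, the containment itself, is not proved in your proposal: you describe it as ``the real obstacle'' and offer only a heuristic (``the dual exponential annihilates the norm-compatible system for weight reasons'', ``the Perrin-Riou-style big exponential has no $f$-component in the limit''). That is precisely the content of the proposition, and the hypothesis $\CDcris(\bD)^{\Ph=p^{-1}}=\CDcris(\bD)$ puts you in the exceptional range of Frobenius eigenvalues where such ``weight reasons'' arguments are most delicate (this is the regime where $H^1_f$, $H^1_e$, $H^1_g$ differ and where Perrin-Riou's exponential acquires poles), so the gap cannot be waved away as routine bookkeeping.

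What the paper actually does, and what is missing from your proposal, is a reduction to an explicit rank-one computation. By Colmez's classification of rank-one $(\Ph,\Gamma)$-modules and the structure result for isoclinic modules satisfying the two hypotheses (\cite{Ben11}, Proposition 1.5.8), $\bD$ decomposes as a direct sum of modules $\bD_m=\CR_E(\vert x\vert x^m)$ with $m\geqslant 1$, so one may assume $\bD=\bD_m$. Then $H^1(\bD_m)$ has Colmez's explicit basis $\{\cl(\alpha_m),\cl(\beta_m)\}$ with $\cl(\beta_m)$ generating $H^1_c$, and the identity $\frac{1}{X}=\sum_{i=0}^{p-1}(1+X)^i\Ph\left(\frac{1}{X}\right)$ gives $\psi(1/X)=1/X$; induction on $m$ via $\psi\circ\partial=p\,\partial\circ\psi$ (with $\partial=(1+X)d/dX$) produces the element $\partial^{m-1}(1/X)e_m\in\bD_m^{\psi=1}=H^1_{\Iw}(\bD_m)$ whose image in $H^1(\bD_m)$ is $\beta_m$. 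This exhibits the required containment between $H^1_{\Iw}(\bD_m)_{\Gamma}$ and the one-dimensional space $H^1_c(\bD_m)$ concretely (note it naturally yields $H^1_c\subseteq H^1_{\Iw}(\bD_m)_{\Gamma}$, the opposite inclusion to the one you aim at — either one suffices once the dimensions match). To complete your write-up you would need either to carry out this reduction and explicit $\psi$-invariance computation, or to supply an actual proof of your orthogonality lemma; as it stands the latter is asserted, not established.
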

\begin{proof} For any continuous character $\delta\,:\,\Qp^* \rightarrow E^*$ let 
$\CR_E(\delta)$ denote the
$(\Ph,\Gamma)$-module of rank one $\CR_E e_{\delta}$ such that  
$\Ph (e_{\delta})=\delta (p)e_{\delta}$
and $\gamma (e_{\delta})=\delta (\chi (\gamma))e_{\delta}.$ 
In \cite{Cz08}, Proposition 3.1 Colmez proved that any $(\Ph,\Gamma)$-module of rank $1$ 
is isomorphic to a unique module of the form $\CR_E(\delta)$ for 
some $\delta.$

Set $\bD_m=\CR_E(\vert x\vert x^m).$ Let $e_m$ denote the canonical generator 
of $\bD_m.$   It is not difficult to see that an isoclinic $(\Ph,\Gamma)$-module $\bD$ which satisfies the conditions of Proposition~\ref{coinvariants od isoclinic modules}  is isomorphic to  
the direct sum $\underset{i=1}{\overset{d}\oplus} \bD_{m_i}$
for some $m_i\geqslant 1$  (\cite{Ben11}, Proposition 1.5.8). Therefore we can assume that 
$\bD=\bD_m$ for some $m\geqslant 1.$  The $E$-vector space $H^1(\bD_m)$
is generated by the cohomology classes
$\cl (\alpha_m)$ and $\cl (\beta_m)$
where
\begin{eqnarray}
&&\alpha_m=  \partial^{m-1} \left (\frac{1}{X}+\frac{1}{2},a \right )\, e_{m} ,\quad 
(1-\Ph)\,a=(1-\chi (\gamma) \gamma)\,\left (\frac{1}{X}+\frac{1}{2} \right )
\nonumber \\
&&\beta_m=
\partial^{m-1} \left (b,\frac{1}{X} \right )\,e_{m},\quad
(1-\Ph)\,\left (\frac{1}{X}\right )\,=\,(1-\chi (\gamma)\,\gamma)\,b
\nonumber
\end{eqnarray}
where  $\partial=(1+X)\,d/dX $
(\cite{Cz08}, Sections 2.3-2.5). Moreover 
$\cl (\alpha_m) \in H^1_f(\bD_m)$ and 
$\cl (\alpha_m) \in H^1_c(\bD_m)$ (\cite{Ben11}, Theorem 1.5.7). 
For $m=1$ the module $\bD_1=\CR_E(\chi)$ is \'etale and corresponds 
to the $p$-adic representation $E(1).$ 
Using the formula
\[
\frac{1}{X}=\underset{i=0}{\overset{p-1}\sum} (1+X)^i\Ph \left (\frac{1}{X}\right )
\]
it can be checked directly that $\psi (1/X)=1/X.$ Thus  
$\psi \left (\frac{1}{X}e_1 \right )=\frac{1}{X}e_1$ and we proved 
that $ \frac{1}{X}e_1\in \bD_1^{\psi=1}.$ An easy induction using the
identity $\psi \partial=p\partial \psi$ shows that for each $m\geqslant 1$
one has  $ \partial^{m-1}\left (\frac{1}{X}\right )e_m\in \bD_m^{\psi=1}.$ Since the image of  $\partial^{m-1}\left (\frac{1}{X}\right )e_m$ under 
the map $H^1_{\Iw}(\bD_m) \rightarrow H^1_{\Iw}(\bD_m)_{\Gamma} \subset 
H^1(\bD_m)$ is $\beta_m$, this proves that 
$H^1_{\Iw}(\bD_m)_{\Gamma} \subset H^1_c(\bD_m).$
On the other hand, Theorem~\ref{Iwasawa cohomology of phi-gamma mod} iii)
gives Hochschild-Serre  exact sequence 
\[
0\rightarrow H^1_{\Iw}(\bD_m)_{\Gamma} \rightarrow H^1(\bD_m)
\rightarrow H^2_{\Iw}(\bD_m)^{\Gamma}\rightarrow 0.
\]
By the same Theorem $H^2_{\Iw}(\bD_m)$ is  finite dimensional over $E$ and therefore
\[
\dim_E  H^2_{\Iw}(\bD_m)^{\Gamma}=\dim_E  H^1(\bD_m)_{\Gamma}=
\dim_E  H^2(\bD_m)=1.
\]
Thus 
\[ 
\dim_E  H^1_{\Iw}(\bD_m)_{\Gamma}=\dim_E  H^1(\bD_m)-
\dim_E  H^2(\bD_m)=1
\]
and we proved that $\dim_E  H^1_{\Iw}(\bD_m)_{\Gamma}=\dim_EH^1_c(\bD_m).$
This gives the proposition.
\end{proof}

\noindent
{\bf Remark.} It can be shown that the image of $p\in \Qp^*$
under the Kummer map is  $(1-1/p)\log (\chi)\cl (\beta_1)$
(see \cite{Ben00}, Proposition 2.1.5).

\section{The main conjecture}

\subsection{Regular submodules} In this section  we apply the theory 
of $(\Ph,\Gamma)$-modules to Iwasawa theory of $p$-adic representations. 
In particular, we propose a conjecture 
which can be seen as a (weak) generalisation of both  Greenberg's \cite{Gr89} and Perrin-Riou's \cite{PR95}
Main Conjectures.  Fix a prime number $p$ and  a finite set $S$ of primes of $\Q$ 
containing $p.$ For each number field $F$ we denote by $G_{F,S}$ the Galois group
of the maximal algebraic extension of $F$ unramified outside $S\cup \{\infty\}.$
For any topological module $M$ equipped with a continuous action of $G_{F,S}$
we write $H^*_S(F,M)$ for the continuous cohomology of $G_{F,S}$ with coefficients in
$M.$  Fix   a finite extension $E$ of $\Qp$ which will play
the role of the coefficient field. Let $V$ be an $p$-adic representation of 
$G_{\Q,S}$ with coefficients in $E.$  Recall that for each 
$v\in S$ we denote by $H^1_f(\BQ_v,V)$ the subgroup of 
$H^1(\BQ_v,V)$ defined by \eqref{Bloch-Kato local conditions} 
and by $H^1_f(\BQ,V)$ the Bloch--Kato Selmer group \eqref{definition of Bloch-Kato Selmer}.

The  Poitou--Tate exact sequence gives an exact sequence 
\begin{multline}
\label{Poitou Tate with H_f}
0 \rightarrow H^1_f(\BQ, V)\rightarrow H^1_S(\BQ, V)\rightarrow \underset{v\in S}\bigoplus 
\frac{H^1(\BQ_v,V)}{H^1_f(\BQ_v,V)}\rightarrow H^1_f(\BQ,V^*(1))^*
\rightarrow \\
H^2_S(\BQ,V)\rightarrow \underset{v\in S}\bigoplus  H^2(\BQ_v,V) 
\rightarrow H^0_S(\BQ,V^*(1))^*\rightarrow 0 
\end{multline}
(see \cite{FP}, Proposition 2.2.1)
Together with the well known formula
for the Euler characteristic this implies that
\begin{multline}
\label{formula for dimensions of H_f}
\dim_{E} H^1_f(\Q,V) - \dim_{E} H^1_f(\Q,V^*(1)) -
\dim_{E} H^0_S(\Q,V) + \\
+\dim_{E} H^0_S(\Q,V^*(1)) =
\dim_{E} t_V(\Qp)-d_{+}(V).
\end{multline}
where $d_{\pm}(V)=\dim_{E} V^{c=\pm 1}$ and $c$ denotes
the complex conjugation.

In the rest of this section  we assume that $V$ 
satisfies the following conditions:

{\bf C1)} $H^0_S(\Q,V)=H^0_S(\Q, V^*(1))=0.$ 

{\bf C2)} $V$ is semistable at $p$ and $\Ph \,:\,\Dst (V) \rightarrow \Dst (V)$
is semisimple at $1$ and $p^{-1}$.

{\bf C3)} $\Dc (V)^{\Ph=1}=0.$

{\bf C4)} $V$ satisfies one of the following conditions

{\bf a)}  $H^1_f(\BQ, V^*(1))=0.$

{\bf b)}  $V$ is a self dual representation, i.e. it is equipped with a
non degenerate bilinear form 
$V\times V\rightarrow E(1).$
\newline
\,

In support of these assumptions we remark that  representations which we have in mind 
appear as $p$-adic realisations of pure motives over $\Q.$   
Let $X/\Q$ be a smooth proper variety having good reduction outside
a finite set $S$ of places containing $p.$  Consider the motives $h^i(X)(m)$ where   
$0\leqslant i\leqslant 2\dim (X)$ and $m\in \mathbf Z.$ Let $H^i_{p}(X)$ denote
the $p$-adic \'etale cohomology of $X.$ The $p$-adic realization of $h^i(X)(m)$ is 
$V=H^i_{p}(X) (m).$  The action
of $\Gal (\overline{\Q}/\Q)$ factors throught $G_{\Q,S}.$ 
Moreover the restriction of $H^i_p(X)$ on the decomposition  group at $p$ 
is  semistable  if $X$ has semistable reduction at $p$ (and potentially semistable in general) \cite{Fa89}, \cite{Ts}.    Poincar\'e duality and the hard
Lefschetz theorem give a canonical isomorphism
\begin{equation}
\label{duality etale cohomology}
H_p^i(X)^*\simeq H^i_p(X)(i)
\end{equation}
and therefore 
\[
V^*(1)\simeq V(i+1-2m).
\]
The motive $h^i(X)(m)$ is pure of weight  of $w=i-2m$ and eventually replacing 
$V$ by $V^*(1)$ one can assume that $w\leqslant -1.$
By the comparision isomorphism of the $p$-adic Hodge theory 
\cite{Ts}
$\Dst (H^i_p(X))$ is isomorphic to 
the $\log$-crystalline cohomology $H^i_{\log-\text{\rm cris}}(X/\Qp).$
The semisimplicity of the Frobenius action on  
$H^i_{\log-\text{\rm cris}}(X/\Qp)$ is a well known open question. 
The weight monodromy conjecture of Deligne--Jannsen \cite{Ja89} predicts that
the absolute values of the eigenvalues of $\Ph$ acting on $\Dc (H^i_p(X))=\Dst (H^i_p(X))^{N=0}$
are $\leqslant i/2$ and therefore   $\Dc (V)^{\Ph=1}$ should be $0$  if $w\leqslant -1.$ 
If $X$ has good reduction at $p$ we do not need the weight  monodromy conjecture
and the nullity of  $\Dc (V)^{\Ph=1}$  follows inconditionally from the result of Katz--Messing \cite{KM}.

Assume that  $w=-1.$  Then $i$ is odd,  $m=\displaystyle\frac{i+1}{2}$ 
and the isomorphism \eqref{duality etale cohomology} shows that $V$ is self dual
and therefore satisfies {\bf C4b)}. 

Assume that $w\leqslant -2.$ One expects that
\begin{equation}
H^1_f(\BQ, V)=0\qquad \text{\rm if  $w\leqslant -2$.} 
\end{equation}
This follows from conjectural properties of the category $\mathcal M\mathcal M$ of 
mixed motives over $\Q.$  Consider   Yoneda groups  
\[H^n(\Q,h^i(X)(m))=
\Ext^n_{\mathcal M\mathcal M}(\Q(0),h^i(X)(m)), \quad n=0,1
\]
 and denote by $H^1_f(\Q,h^i(X)(m))$
 the subgroup of extensions having
"good reduction".   It is conjectured that the $p$-adic realisation functor induces an isomorphism
\begin{equation}
\label{p-adic regulator}
R_p\,:\,H^1_f(\Q,h^i(X)(m)) \iso H^1_f(\Q, H^i_p(X)(m)), \qquad \forall 
m\in \mathbf Z
\end{equation}
(see \cite{Fo92}). In particular we should have an isomorphism
\[
H^1_f(\Q,h^i(X)(i+1-m)) \iso H^1_f(\Q, V^*(1)).
\]
On the other hand from the semisimplicity of the category of pure motives
\cite{Ja90} it follows  that   
$
H^1(\Q,M)=0
$
for any pure motive $M$ of weight $\geqslant 0.$ In particular,
$H^1(\Q,h^i(X)(i+1-m))$ should vanish  if $-i-2+2m\geqslant 0.$ 
Together with \eqref{p-adic regulator} this imples {\bf C4b)}.  To sum up, from the motivic point of view the conditions
{\bf C4a)} and {\bf C4b)} correspond to weight $\leqslant -2$ and 
 weight $-1$ cases respectively.  
We consider  these two  cases separately below. 
\newline

\noindent
{\bf The weight $\leqslant -2$ case.} 
In this subsection we assume that $V$ satisfies 
{\bf C1-3)} and {\bf C4a)}. From {\bf C3)} it follows that the exponential map 
$t_V(\Qp)\rightarrow H^1_f(\Qp,V)$
is an isomorphism and we denote by $\log_V$ its inverse. Compositing $\log_V$ with
the localization map $H^1_f(\Q,V)\rightarrow H^1_f(\Qp,V)$  we obtain a map
\[
r_V\,:\,H^1_f(\Q,V) \rightarrow t_V(\Qp).
\]
One expects that if $V=H^i_p(X)(m)$ with $m\geqslant i/2+1$ this map is related to the syntomic 
regulator $R_{\mathrm{syn}}$ {\it via} the commutaive diagram
\begin{displaymath}
\xymatrix{
H^1_f(\Q,h^i(X)(m)) \ar[dr]^{R_{\text{\rm syn}}} \ar[d]^{R_p} & \\
H^1_f(\Q, V) \ar[r]^{r_V} &t_V(\Qp).
}
\end{displaymath}
Our next assumption reflects the hope that the syntomic regulator is
an injective map. 

{\bf C5a)} The localization map
\[
\text{\rm loc}_p\,:\,H^1_f(\Q,V)\rightarrow H^1_f(\Qp,V)
\]
is injective. 

In support of this assumption  we remark that if 
$V=H^i_p(X)(m)$ then  {\bf C5a)} holds  for all $m\neq i/2,i/2+1$ if in addition we assume that 
\[
H^0(\Q_v,V)=0, \qquad  \forall \,
v\neq p
\]
(and therefore $H^1_f(\Q_v,V)=0$ for all $v\neq p$)
(\cite{Ja89}, Lemma 4 and Theorem 3). 

\begin{definition}[\sc Perrin-Riou] Assume that $V$ is a $p$-adic representation
which satisfies the conditions {\bf C1-3)}, {\bf C4a)} and {\bf C5a)}. 

i) A $(\Ph,N)$-submodule $D$ of $\Dst (V)$ is regular if $D\cap \F^0\Dst (V)=0$
and the map
\[
r_{V,D}\,:\, H^1_f(\Q,V)\rightarrow \Dst (V)/(\F^0\Dst (V)+D)
\]
induced by $r_V$ is an isomorphism.

ii) Dually, a $(\Ph,N)$-submodule $D$ of $\Dst (V^*(1))$ is regular if 

\[D+\F^0\Dst (V^*(1))=\Dst (V^*(1))
\]
 and the map
\[
D\cap \F^0\Dst (V^*(1))\rightarrow H^1_f(\Q,V)^*
\]
induced by the dual map  $r_V^*\,:\,\F^0\Dst (V^*(1))\rightarrow H^1_f(V)^*$ is an 
isomorphism. 
\end{definition}
\noindent
{\bf Remark.} Assume that $H^1_f(\Q,V)=H^1_f(\Q,V^*(1))=0.$ Then $D$ is regular 
if the canonical projection $D\rightarrow t_V(\Qp)$ is an isomorphism of vector
spaces and our definition agrees with the definition given in \cite{Ben11}.
\newline

\noindent
{\bf The  weight $-1$ case.} In this subsection we assume that $V$ satisfies 
the conditions {\bf C1-3)} and {\bf C4b)}.  The formula \eqref{formula for dimensions of H_f} gives
\[
\dim_E t_V(\Qp)=d_{+}(V).
\]

\begin{definition}[\sc{Perrin-Riou}] Assume that $V$ is a $p$-adic representation
which satisfies the conditions {\bf C1-3)} and  {\bf C4b)}. A 
$(\Ph,N)$-submodule $D$ of $\Dst (V)$ is regular if the canonical map
$D\rightarrow t_V(\Qp)$ is an isomorphism.
\end{definition}

In the both cases {\bf C4a)} and {\bf C4b)} from \eqref{formula for dimensions of H_f}
it follows immediately that for a regular submodule
$D$ one has
\begin{equation}
\label{dimension of regular submodule}
\dim_ED=d_+(V).
\end{equation}

\noindent
{\bf Remark.} Our definition of a regular module in the weight $-1$ case  is slightly different 
from Perrin-Riou's definition.  If we assume the non-degeneracy  of 
$p$-adic height pairings $\left <\,,\,\right >_{V,D}$  (see Section 4) then our 
regular modules are regular also in the sense of \cite{PR95} but 
the converse in not true. Here we lose in generality to avoid additional 
technical assumptions.

\subsection{Iwasawa cohomology} We keep previous notation and conventions. 
Let $\Q^{\cyc} =\underset{n\geqslant 1}\cup \BQ (\mu_{p^{n}})$
and  $\Gamma=\Gal (\Q^{\cyc}/\Q).$ 
Then $\Gamma=\Delta \times \Gamma_0$ where $\Delta=\Gal (\BQ (\mu_p)/\BQ)$
and $\Gamma_0=\Gal (\Q^{\cyc}/\BQ (\mu_p)).$
Set
$F_n=\BQ (\mu_{p^{n+1}})^{\Delta}$ and 
$F_{\infty}=\underset{n\geqslant 1}\cup F_n.$ The Galois group 
$\Gal (F_{\infty}/F)$ is canonically isomorphic to
$\Gamma_0.$ Let $\Lambda = \CO_E [[\Gamma_0]]$ denote 
the Iwasawa algebra of $\Gamma_0.$

Let $V$ be a $p$-adic representation of $G_{\Q,S}.$ Fix a $\CO_E$-lattice $T$ of
$V$ stable under the action of $G_{\Q,S}$ and consider the Iwasawa cohomology
\[
H^i_{\Iw,S}(\Q, T)=\underset{\text{\rm cores}}\varprojlim H^1_S(F_n,T),\qquad
H^i_{\Iw}(\Q_v,T)=H^i_{\Iw}(\Q_v, T)\otimes_{\CO_E}E.
\]

The main properties of these groups are  summarized below 
(see also \cite{PR92}).
\newline \,

i)  $H^i_{\Iw,S}(\Q,V)=0$ and $H^i_{\Iw}(\mathbf Q_v,T)=0$ if $i\ne 1,2;$
\newline
\,

ii) If $v\ne p$, then 
 $H^i_{\Iw}(\mathbf Q_v,T)$ are  finitely-generated
$\Lambda$-torsion modules. In particular, 
\begin{equation}
\label{Hiw is torsion if v not p}
H^1_{\Iw}(\mathbf Q_v,T)\simeq 
H^1(\mathbf Q_v^{\mathrm{ur}}/
\mathbf Q_v,(\Lambda \otimes_{\CO_E}T^{I_v})^\iota).
\end{equation}
\newline
\,

iii) If $v=p$ then $H^2_{\Iw}( \Qp,T)$ is a  finitely generated  
$\Lambda$-torsion module. 
Moreover
\begin{equation*}
\mathrm {rank}_{\Lambda} \left (H^1_{\Iw}(\mathbb  Q_p,T)\right )\,=\,d,\qquad
H^1_{\Iw}(\Qp,T)_{\mathrm{tor}}\simeq H^0(\mathbf Q_p(\zeta_{p^\infty})\,,T).
\end{equation*}
We remark that by local duality $H^2_{\Iw}(\mathbb Q_p,T)\simeq 
H^0(F_{\infty,p},V^*(1)/T^*(1))$.
\newline
\,

The $\Lambda$-module structure of  $H^i_{\Iw,S}(\Q, T)$  depends heavily on the following conjecture formulated 
by Greenberg \cite{Gr89}.  

\begin{leopoldt} Let $V$ be a $p$-adic representation of $G_{\Q,S}$
which is potentially semistable at $p$. Then
$$
H^2_S(F_\infty,V/T)=0.
$$
\end{leopoldt}

We have the following result, proved in \cite{PR95}, Proposition 1.3.2.

\begin{myproposition} Assume that the weak Leopoldt conjecture holds for $V.$
Then $H^2_{\Iw,S}(\Q,T)$ is  $\Lambda$-torsion and 

\begin{displaymath}\text{\rm rank}_{\Lambda}  H^1_{\Iw,S}(\Q,T)=
 d_{-}(V). 
\end{displaymath}
\end{myproposition}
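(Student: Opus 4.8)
The plan is to deduce the proposition from the global Euler characteristic formula together with the weak Leopoldt conjecture, via the standard descent argument. First I would recall that by part (i) of the summary of properties of Iwasawa cohomology, the complex $\RG_{\Iw,S}(\Q,T)$ is concentrated in degrees $1$ and $2$, so $H^1_{\Iw,S}(\Q,T)$ and $H^2_{\Iw,S}(\Q,T)$ are the only nonzero cohomology groups. The weak Leopoldt conjecture for $V$ asserts $H^2_S(F_\infty,V/T)=0$; by a Pontryagin duality argument (as in \cite{PR95}, \cite{NeSC}) this is equivalent to the statement that $H^2_{\Iw,S}(\Q,T)$ is a torsion $\Lambda$-module. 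I would spell this out: $H^2_S(F_\infty,V/T)$ is, up to duality, the ``adjoint'' or Iwasawa-adjoint of $H^2_{\Iw,S}(\Q,T)$, and its vanishing forces the rank of $H^2_{\Iw,S}(\Q,T)$ over $\Lambda$ to be zero. This gives the first assertion.

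Next I would compute the generic rank of $H^1_{\Iw,S}(\Q,T)$. The cleanest route is to tensor $\RG_{\Iw,S}(\Q,T)$ with the field of fractions $L$ of $\Lambda$ (or equivalently to pass to the total rank, an additive invariant on the bounded derived category of finitely generated $\Lambda$-modules) and use the global Euler--Poincar\'e characteristic formula. Over $\Lambda$ the Euler characteristic of $\RG_{\Iw,S}(\Q,T)$ equals $-\,\mathrm{rank}_{\CO_E}(T)\cdot r_2(\Q) = -d\cdot\tfrac{?}{}$ — more precisely, for $\Q$ the formula reads
\[
\mathrm{rank}_{\Lambda}H^1_{\Iw,S}(\Q,T)-\mathrm{rank}_{\Lambda}H^2_{\Iw,S}(\Q,T)=d_{-}(V),
\]
where $d_{-}(V)=\dim_E V^{c=-1}$ comes from the contribution of the archimedean place (the fixed part under complex conjugation drops out in the limit over the cyclotomic tower because $\Gamma_0$ has no finite quotient matching $\Delta$). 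Since $H^0_{\Iw,S}(\Q,T)$ and $H^3_{\Iw,S}(\Q,T)$ vanish — the former by {\bf C1)} and a limit argument, the latter because $\Q$ has no real-embedding-independent obstruction and $p$ is odd — only the two middle terms survive. Combining this with the torsionness of $H^2_{\Iw,S}(\Q,T)$ established in the previous step yields $\mathrm{rank}_{\Lambda}H^1_{\Iw,S}(\Q,T)=d_{-}(V)$.

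The main obstacle, and the only place real input is needed, is the equivalence between the weak Leopoldt conjecture as stated ($H^2_S(F_\infty,V/T)=0$) and the torsionness of $H^2_{\Iw,S}(\Q,T)$. One direction is a genuine duality computation: one uses that $H^2_S(F_\infty,V/T)$ is cofinitely generated, relates its Pontryagin dual to $H^2_{\Iw,S}(\Q,T)$ via Jannsen's spectral sequence or Nekov\'a\v r's duality (\cite{NeSC}, Chapter 8), and observes that a torsion $\Lambda$-module has vanishing Iwasawa adjoint only up to pseudo-null error, so some care is needed to see that the precise vanishing hypothesis gives precisely torsionness rather than something weaker. The remaining steps — the Euler characteristic bookkeeping and the vanishing of the outer cohomology groups — are routine once the framework of \cite{PR95}, Section 1 is in place, and indeed the statement is quoted there as Proposition 1.3.2, so I would ultimately cite that computation while indicating the structure of the argument as above.
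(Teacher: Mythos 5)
Your proposal is correct and follows essentially the same route as the paper, which itself gives no argument beyond citing \cite{PR95}, Proposition 1.3.2: your two steps (weak Leopoldt for $V$ $\Leftrightarrow$ $\Lambda$-torsionness of $H^2_{\Iw,S}(\Q,T)$ via duality/Jannsen's spectral sequence, then the $\Lambda$-adic Euler--Poincar\'e formula combined with $H^0_{\Iw}=H^3_{\Iw}=0$) are precisely the content of that reference. One cosmetic correction: the coefficient $d_-(V)$ is forced already at each finite level by the archimedean term $\hat H^0(\mathbf{R},V)$ in Tate's global Euler characteristic formula (each of the $p^n$ real places of $F_n$ contributes $-d_-(V)$), not by anything ``dropping out in the limit'' as your parenthetical suggests.
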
 

Passing to projective limits
in the Poitou-Tate exact sequence for $V$  one obtains an exact sequence
\begin{multline} 
\label{Poitou-Tate Iwasawa}
0\rightarrow H^2(F_{\infty}, V^*(1)/T^*(1))^{\wedge}
\rightarrow H^1_{\Iw,S}(\Q, T)\rightarrow 
\underset{v\in S}\bigoplus H^1_{\Iw}(\Q_v,T)\rightarrow \\
\rightarrow 
H^1_S(F_\infty,V^*(1)/T^*(1))^{\wedge} 
\rightarrow H^2_{\Iw,S}(\Q,T) 
\rightarrow \underset{v\in S}\bigoplus H^2_{\Iw}(\Q_v,T)\rightarrow
\\
 \rightarrow 
H^0_S(F_\infty,V^*(1)/T^*(1))^{\wedge}\rightarrow 0,
\end{multline}
where $(-)^{\wedge}$ denotes the Pontriagin dual. Therefore 
if the weak Leopoldt conjecture holds for $V^*(1)$ we have an exact sequence
\begin{multline} 
0
\rightarrow H^1_{\Iw,S}(\Q, T)\rightarrow 
\underset{v\in S}\bigoplus H^1_{\Iw}(\Q_v,T)
\rightarrow 
H^1_S(F_\infty,V^*(1)/T^*(1))^{\wedge} 
\rightarrow \\H^2_{\Iw,S}(\Q,T) 
\rightarrow \underset{v\in S}\bigoplus H^2_{\Iw}(\Q_v,T)
 \rightarrow 
H^0_S(F_\infty,V^*(1)/T^*(1))^{\wedge}\rightarrow 0.\nonumber
\end{multline}

\subsection{The complexes $\SC_{\Iw} (V,D)$ and $\SC (V,D)$} 
In this section we construct  Selmer complexes $\SC_{\Iw} (V,D)$ and $\SC (V,D)$
which play the central role in our approach to the Iwasawa theory.
Nekov\'a\v r's book  \cite{NeSC} provides a detailed study of Selmer
complexes associated to Greenberg's local conditions. 
For the  purposes of this paper one   should work with local conditions associated  to general $(\Ph,\Gamma)$-submodules 
of $\Ddagrig (V).$ In this context the general formalism
of Selmer complexes was developed by Pottharst in \cite{Po13} and we refer to \textit{op. cit.} and \cite{PoCIT}  for further information and details. 

Let $V$ be a $p$-adic representation of the Galois group $G_{\Q,S}$
with coefficients in $E.$ We fix a $G_{\Q,S}$-stable lattice $T$ of $V.$
Recall that we denote by  $C^{\bullet}(G_{\BQ,S}, (T\otimes \La)^{\iota})$
and $C^{\bullet}(G_{\BQ_v}, (T\otimes \La)^{\iota})$ the complexes
of continuous cochaines of  $G_{\BQ,S}$ and $G_{\BQ_v}$
respectively with coefficients in $(T\otimes \La)^{\iota}$ 
and  by $\RG_{\Iw,S}(\Q,T)$ and 
$\RG_{\Iw}(\Q_v,T)$ these complexes viewed as objects
of  $ D^{\mathrm b}(\La).$
Recall that Shapiro's lemma gives  canonical isomorphisms 
\begin{equation}
\R^i\Gamma_{\Iw,S}(\Q,T) \simeq H^i_{\Iw,S}(\Q, T), \qquad 
\R^i\Gamma_{\Iw}(\Q_p,T) \simeq H^i_{\Iw}(\Q_p, T).\nonumber
\end{equation}

The derived version of the Poitou--Tate exact sequence for Iwasawa cohomology  reads 
\[
\RG_{\Iw,S}(\Q,T) \rightarrow \underset{v\in S}\bigoplus \RG_{\Iw}(\Q_v,T)
\rightarrow \RG (F_{\infty},V^*(1)/T^*(1))^{\wedge}[-2]
\]
(see \cite{NeSC}, Proposition  8.4.22). Passing to cohomology in this
exact triangle one obtains \eqref{Poitou-Tate Iwasawa}.

Let $\overline V=V\otimes_{\BQ_p}\CH^{\iota}.$ Then
\begin{eqnarray}
&&\RG_{\Iw,S} (\Q,\overline V) \simeq 
\RG_{\Iw,S} (\Q, T)\otimes^{\mathbf L}_{\La}\CH, \nonumber\\
&&\RG_{\Iw} (\Q_v,\overline V) \simeq 
\RG_{\Iw} (\Q_v, T)\otimes^{\mathbf L}_{\La}\CH. \nonumber
\end{eqnarray}

Assume that $V$ is potentially semistable. To each
$(\Ph,N)$-submodule  $D$ of  $\Dst (V)$ we associate a  complex
$\SC_{\Iw} (V,D)$ which can be seen as a direct generalisation of 
Selmer complexes determined by Greenberg's local conditions and studies in 
Chapters 7-8 of \cite{NeSC}.
We define local conditions $U^{\bullet}_{\Iw,v}(V,D)$ ($v\in S$). For $v\neq p$ we 
set
\[
U_{\Iw,v}^{\bullet}(V,D)=\RG_{\Iw,f}(\Q_v,T)\otimes_{\Lambda}\CH
\]
where  
\[
\RG_{\Iw,f}(\Q_v,T)=\left [ T^{I_v}\otimes \Lambda^{\iota}
\xrightarrow{\Fr_v-1}T^{I_v}\otimes \Lambda^{\iota}\right ].
\]
Here $I_v$ denotes the inertia group at $v$, $\Fr_v$ is the geometric Frobenius  
and the first term is placed in degree $0$.
Let  $H^i_{\Iw,f}(\Q_v,T)$ denote the cohomology of this complex. 
From \eqref{Hiw is torsion if v not p} it follows 
that  $H^i_{\Iw,f}(\Q_v,T)=0$ for $i\neq 1$ and 
$H^1_{\Iw,f}(\Q_v,T)=H^1_{\Iw}(\Q_v,T)$ is  $\La$-torsion.

Now we define the local condition at $p.$
Let $\bD$ be the $(\Ph,\Gamma)$-submodule of $\Ddagrig (V)$  associated to $D$ by 
Theorem~\ref{berger theorem}. Set
\[
U_{\Iw,p}^{\bullet}(V,D)=\RG_{\Iw}(\bD)=\left [\bD^{\Delta} \xrightarrow{\psi-1}\bD^{\Delta} \right ].
\]
From Theorem~\ref{Iwasawa cohomology of phi-gamma mod} it follows that
we have a canonical map $U_{\Iw,p}^{\bullet}(V,D) \rightarrow 
\RG_{\Iw} (\Q_p, T)\otimes^{\mathbf L}_{\La}\CH.$
This gives a diagram in $\mathrm D^{\mathrm b}_{\mathrm{coad}} (\mathscr H)$
\begin{displaymath}
\xymatrix{
\RG_{\Iw,S}(\Q,T)\otimes^{\mathbf L}_{\Lambda}\CH \ar[r] &\underset{v\in S}\bigoplus
\RG_{\Iw}(\Q_v,T)\otimes^{\mathbf L}_{\Lambda}\CH\\
&\underset{v\in S}\bigoplus U^{\bullet}_{\Iw,v}(V,D) \ar[u]}
\end{displaymath}
and we denote by ${\SC}_{\Iw} (V,D)$ 
the  associated Selmer complex
\begin{multline}
\label{definition of SC_Iw}
{\SC}_{\Iw}(V,D)=
\text{\rm cone}
\left ( \left (\RG_{\Iw,S}(\Q,T)\otimes^{\mathbf L}_{\Lambda}\CH \right  ) 
\bigoplus \underset{v\in S}\bigoplus U^{\bullet}_{\Iw,v}(V,D)\right.
\rightarrow \\
\left.
\underset{v\in S}\bigoplus
\RG_{\Iw}(\Q_v,T)\otimes^{\mathbf L}_{\Lambda}\CH \right  )[1].
\end{multline}
Now we define the complex $\RG (V,D).$ For $v\neq p$ let 
\[
\RG_f(\Q_v,V)= \left [V \xrightarrow{\mathrm{Fr}_v-1} V \right ]
\]
where the first term is placed in degree $0.$ It is clear that
$\R^0\Gamma_f(\Q_v,V)=H^0(\Q_v,V)$ and $\R^1\Gamma_f(\Q_v,V)$ 
coincides with the group $H^1_f(\Q_v,V)$ defined by \eqref{Bloch-Kato local conditions}.  Define 
\begin{equation}
U_v^{\bullet}(V,D)=\begin{cases} \RG_f (\Q_v, V), &\textrm{if $v\neq p$} \\
\RG (\Qp,\bD), &\textrm{if $v=p$}.
\end{cases}
\end{equation}
From Theorem~\ref{Iwasawa cohomology of phi-gamma mod} it follows that
\[
U_v^{\bullet}(V,D) =U_{\Iw,v}^{\bullet}(V,D)\otimes^{\mathbf L}_{\CH}E, \qquad v\in S. 
\]
Consider the Selmer complex $\SC (V,D)$ associated to  this data
\begin{equation}
\label{definition of SC}
\SC(V,D)=
\text{\rm cone}
\left ( \RG_{S}(\Q,T) 
\bigoplus \underset{v\in S}\bigoplus U^{\bullet}_{v}(V,D)
\rightarrow 
\underset{v\in S}\bigoplus
\RG (\Q_v,V) \right  )[1].
\end{equation}
We denote by $H^*_{\Iw}(V,D)$ and $H^*(V,D)$ the cohomology
of $\RG_{\Iw}(V,D)$ and $\RG(V,D)$ respectively.
The main properties of our Selmer complexes 
are summarized below.

\begin{myproposition}[{\sc Pottharst}]
\label{properties of SC_Iw} 
i) The complex $\SC_{\Iw}(V,D)$ 
has cohomology concentrated in degrees $[1,3]$ consisting
of coadmissible $\CH$-modules.

ii) One has
\[
\mathrm{rank}_{\CH}H^1_{\Iw}(V,D)=
\mathrm{rank}_{\CH}H^2_{\Iw}(V,D)
\]
Moreover 
$H^3_{\Iw}(V,D)=\left (T^*(1)^{H_{\Q,S}}\right )^*\otimes_{\La}\CH$
where $H_{F,S}=\Gal (\Q^{(S)}/F_{\infty}).$ 

iii) The complex $\SC(V,D)$ has cohomology concentrated in degrees $[0,3]$ consisting of of finite dimensional $E$-vector spaces and
\begin{equation}
\SC_{\Iw}(V,D) \otimes_{\CH}^{\mathbf L}E \simeq 
\SC (V,D). \nonumber
\end{equation}
In particular, we have canonical exact sequences
\begin{equation}
\label{Hochschild-Serre for SC} 
0\rightarrow H^i_{\Iw}(V,D)_{\Gamma} \rightarrow H^i(V,D)
\rightarrow H^{i+1}_{\Iw}(V,D)^{\Gamma}\rightarrow 0, \qquad i\in \mathbf N.
\end{equation}

iv) There are canonical dualities
\begin{eqnarray}
\label{duality for SC}
&&\Hom_E( \SC (V,D),E) \simeq \SC (V^*(1),D^{\perp})[3],\nonumber \\
&&\mathscr D \SC_{\Iw}(V,D)^{\iota} \simeq \SC_{\Iw}(V^*(1),D^{\perp})[3].
\nonumber
\end{eqnarray}
In particular, we have canonical exact sequences
\begin{equation}
\label{duality for H_Iw(V,D)}
0\rightarrow \mathscr D^1H^{4-i}_{\Iw}(V,D)\rightarrow 
H^i_{\Iw}(V^*(1),D^{\perp})^{\iota} \rightarrow \mathscr D^0H^{3-i}_{\Iw}(V,D)
\rightarrow 0.
\end{equation}

v) Assume that $D$ satisfies the following conditions:
\begin{eqnarray}
&&\F^0D=0 \quad  \textrm{and}\quad  \F^0(\Dst (V)/D)=\Dst (V)/D \nonumber\\
&&(\Dst (V)/D)^{\Ph=1,N=0}=0 \quad \textrm{and} \quad D/(ND +(p\Ph-1)D)=0. 
\nonumber
\end{eqnarray}
Then $H^1(\Qp,\bD)=H^1_f(\Qp,V)$ and  $H^1(V,D)=H^1_f(\Q,V).$
\end{myproposition}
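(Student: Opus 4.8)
The plan is to prove the local assertion $H^1(\Qp,\bD)=H^1_f(\Qp,V)$ first, and then to read off the global one from the exact triangle defining $\SC(V,D)$. Put $\bD'=\Ddagrig(V)/\bD$. Since $\bD$ is semistable and $\CDst$ is left exact, $\bD'$ is semistable of rank $\dim_E V-\dim_E D$ and the sequence $0\to \CDst(\bD)\to \Dst(V)\to \CDst(\bD')\to 0$ of filtered $(\Ph,N)$-modules is exact, with $\CDst(\bD)=D$ carrying the induced filtration and $\CDst(\bD')\cong\Dst(V)/D$ the quotient filtration. I would first record three vanishing statements. By \eqref{dimension of H_f}, $H^0(\Qp,\bD)=\F^0\CDst(\bD)^{\Ph=1,N=0}=\F^0D^{\,\Ph=1,N=0}=0$ since $\F^0D=0$; and $H^0(\Qp,\bD')=\F^0\CDst(\bD')^{\Ph=1,N=0}=(\Dst(V)/D)^{\Ph=1,N=0}=0$, using that $\F^0(\Dst(V)/D)=\Dst(V)/D$ together with the condition $(\Dst(V)/D)^{\Ph=1,N=0}=0$. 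For $H^2(\Qp,\bD)$ I would invoke the local duality \eqref{duality for coh phi-gamma modules}, giving $H^2(\Qp,\bD)\simeq H^0(\Qp,\bD^*(\chi))^{*}$, and use that $\CDst$ is a $\otimes$-functor to identify $\CDst(\bD^*(\chi))^{\Ph=1,N=0}$ with the annihilator in $\CDst(\bD)^{\vee}$ of the subspace $ND+(p\Ph-1)D\subseteq D$; by the condition $D/(ND+(p\Ph-1)D)=0$ this annihilator is $0$, whence $H^0(\Qp,\bD^*(\chi))=0$ and $H^2(\Qp,\bD)=0$.

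Next comes a dimension count. The Euler--Poincaré formula \eqref{Euler-Poincare phi gamma} gives $\dim_E H^1(\Qp,\bD)=\textrm{rank}_{\CR_E}(\bD)=\dim_E D$, while \eqref{dimension of H_f} gives $\dim_E H^1_f(\Qp,\bD)=\dim_E t_{\bD}(\Qp)+\dim_E H^0(\Qp,\bD)=\dim_E D$ (here $t_\bD(\Qp)=\CDst(\bD)/\F^0\CDst(\bD)=D$ because $\F^0D=0$). Since $H^1_f(\Qp,\bD)\subseteq H^1(\Qp,\bD)$, the two coincide. The long exact sequence of $0\to\bD\to\Ddagrig(V)\to\bD'\to 0$, together with $H^0(\Qp,\bD')=0$, shows that $H^1(\Qp,\bD)\to H^1(\Qp,\Ddagrig(V))=H^1(\Qp,V)$ is injective; and since pushing a crystalline extension of $\CR_E$ by $\bD$ out along $\bD\hookrightarrow\Ddagrig(V)$ yields again a crystalline extension, this injection carries $H^1_f(\Qp,\bD)$ into $H^1_f(\Qp,\Ddagrig(V))=H^1_f(\Qp,V)$. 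Finally $\dim_E H^1_f(\Qp,V)=\dim_E t_V(\Qp)+\dim_E H^0(\Qp,V)=\dim_E D$, because $H^0(\Qp,V)=\F^0\Dst(V)^{\Ph=1,N=0}\subseteq\Dc(V)^{\Ph=1}=0$ by {\bf C3)} and $\Dst(V)=D\oplus\F^0\Dst(V)$ (the content of the two conditions $\F^0D=0$ and $\F^0(\Dst(V)/D)=\Dst(V)/D$). Comparing dimensions, $H^1(\Qp,\bD)=H^1_f(\Qp,\bD)\iso H^1_f(\Qp,V)$.

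For the global statement I would feed this into the defining triangle
\[
\SC(V,D)\longrightarrow \RG_S(\Q,T)\oplus\bigoplus_{v\in S}U^\bullet_v(V,D)\longrightarrow \bigoplus_{v\in S}\RG(\Q_v,V)\longrightarrow \SC(V,D)[1].
\]
For $v\ne p$ the map $U^\bullet_v(V,D)=\RG_f(\Q_v,V)\to\RG(\Q_v,V)$ is an isomorphism on $H^0$ and injective on $H^1$ with image $H^1_f(\Q_v,V)$; for $v=p$ the map $U^\bullet_p(V,D)=\RG(\Qp,\bD)\to\RG(\Qp,V)$ has the same behaviour, by the local statement just proved and $H^0(\Qp,\bD)=0=H^0(\Qp,V)$. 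In the associated long exact sequence, the map $H^0_S(\Q,V)\oplus\bigoplus_v H^0(U^\bullet_v)\to\bigoplus_v H^0(\Q_v,V)$ is surjective ($0$ on the first summand by {\bf C1)}, an isomorphism on each $v$-component), so $H^1(V,D)$ is identified with the kernel of $H^1_S(\Q,V)\oplus\bigoplus_v H^1(U^\bullet_v)\to\bigoplus_v H^1(\Q_v,V)$. Each $H^1(U^\bullet_v)\to H^1(\Q_v,V)$ being injective with image $H^1_f(\Q_v,V)$, an element of this kernel is determined by its component $x\in H^1_S(\Q,V)$, which is then constrained exactly by $\mathrm{loc}_v(x)\in H^1_f(\Q_v,V)$ for all $v\in S$; projecting to $x$ thus identifies $H^1(V,D)$ with the Bloch--Kato Selmer group \eqref{definition of Bloch-Kato Selmer}.

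The main obstacle is the vanishing $H^2(\Qp,\bD)=0$: it is the one point where the arithmetic hypotheses on $D$ beyond "$D$ is a complement of $\F^0\Dst(V)$" are genuinely used, and carrying it out requires the local Poincaré duality for the cohomology of $(\Ph,\Gamma)$-modules together with the bookkeeping identifying the Frobenius, monodromy and filtration on $\CDst(\bD^*(\chi))$ — in particular the translation of $D/(ND+(p\Ph-1)D)=0$ into $\F^0\CDst(\bD^*(\chi))^{\Ph=1,N=0}=0$. Everything else (the Euler characteristic formula, the two long exact sequences, and the compatibility of the crystalline condition under morphisms of $(\Ph,\Gamma)$-modules) is formal.
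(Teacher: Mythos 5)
Your argument establishes only part (v) of the proposition; parts (i)--(iv) are not addressed at all. For what you do prove, the reasoning is sound and essentially self-contained: the translation of the four hypotheses on $D$ into the vanishing of $H^0(\Qp,\bD)$, $H^0(\Qp,\Ddagrig(V)/\bD)$ and $H^2(\Qp,\bD)$ is correct (in particular the identification of $\F^0\CDst(\bD^*(\chi))^{\Ph=1,N=0}$ with the annihilator of $ND+(p\Ph-1)D$, where the filtration condition is vacuous because $\F^0D=0$), the dimension count via \eqref{Euler-Poincare phi gamma} and \eqref{dimension of H_f} forces $H^1(\Qp,\bD)=H^1_f(\Qp,\bD)\iso H^1_f(\Qp,V)$, and the passage to $H^1(V,D)=H^1_f(\Q,V)$ through the long exact sequence of the defining cone is the standard one. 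This is in fact more detail than the paper supplies, since the paper simply refers to Pottharst (\cite{PoCIT}, Theorem 4.1 and \cite{PoAF}, Proposition 3.7); the local computation you give is also close in spirit to the argument the paper later uses for its Proposition on the extended Selmer group.

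The genuine gap is that the bulk of the proposition remains unproved. Part (i) (cohomology concentrated in degrees $[1,3]$ and coadmissible) requires the finiteness and coadmissibility statements of Theorem~\ref{cohomology in families} and Theorem~\ref{Iwasawa cohomology of phi-gamma mod} for the local terms at $p$, together with the corresponding facts for $\RG_{\Iw,S}(\Q,T)\otimes^{\mathbf L}_{\La}\CH$ and for the unramified local conditions. Part (ii) needs the global Euler--Poincar\'e characteristic over $\La$ (equivalently Proposition on the weak Leopoldt rank count) combined with the local Euler characteristics, and the identification of $H^3_{\Iw}(V,D)$ with $\left(T^*(1)^{H_{\Q,S}}\right)^*\otimes_{\La}\CH$ comes from the Poitou--Tate triangle \eqref{Poitou-Tate Iwasawa}, none of which you invoke. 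Part (iii) is a derived base-change/control statement ($\SC_{\Iw}(V,D)\otimes^{\mathbf L}_{\CH}E\simeq\SC(V,D)$ and the resulting sequences \eqref{Hochschild-Serre for SC}), which rests on Theorem~\ref{cohomology in families}~ii) and Theorem~\ref{Iwasawa cohomology of phi-gamma mod}~iii) applied term by term to the cone. Part (iv) requires assembling the local dualities (Theorem~\ref{cohomology in families}~iv) at $p$, classical local duality at $v\neq p$, global Poitou--Tate duality) into a duality of Selmer complexes, plus the Grothendieck-type duality $\mathscr D$ on $D^{\mathrm b}_{\mathrm{coad}}(\CH)$ to obtain \eqref{duality for H_Iw(V,D)}. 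As written, your proposal proves the last fifth of the statement and leaves the first four parts to the reader.
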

\begin{proof} See \cite{PoCIT}, Theorem 4.1 and \cite{PoAF}, Proposition 3.7.
\end{proof}

\subsection{The Main Conjecture} In this subsection we use the
formalism of Selmer complexes to formulate a version of the Main Conjecture
of Iwasawa theory.
Let $M$ be a   pure motive over $\Q$ of weight $w\leqslant -1.$
Since the category of pure motives in semisimple, we can assume 
that $M$ is simple and $\neq \Q(m),$ $m\in \mathbf Z$.
Let $V$ denote the $p$-adic realisation of $M.$ Assume that 
$V$ is semistable. One expects that  for each regular $(\Ph,N)$-submodule  $D$ 
of $\Dst (V)$ there exists   a $p$-adic $L$-function of the form 
\[
L_p(M,D,s)= f_{D}(\chi (\g_0)^s-1),\qquad  f_{D}\in \CH
\]
interpolating algebraic parts of special values of the complex $L$-function $L(M,s)$ (see Section 0.2). 
Assume that  $V$ satisfies the conditions {\bf C1-4)} of Section 2.1. 
We propose the following conjecture.

\begin{mainconj} Let $M/\Q$ be a pure simple motive of weight $\leqslant -1.$
Assume that $M\neq \Q(m),$ $m\in \mathbf Z$.
Let $D$ be a regular
submodule of $\Dst (V).$ Then  

i)  $H^1_{\Iw}(V,D)=0,$

ii) $H^2_{\Iw}(V,D)$ is $\CH$-torsion and 
\[
\mathrm{char}_{\CH} (H^2_{\Iw}(V,D))=(f_{D}).
\]
\end{mainconj}

\noindent
{\bf Remarks.} 1) By Proposition~\ref{properties of SC_Iw} 
the nullity of $H^1_{\Iw}(V,D)$ implies that 
 $H^2_{\Iw}(V,D)$ is $\CH$-torsion.

2) By Proposition~\ref{properties coadmissible modules} any coadmissible 
$\CH$-torsion module $M$ decomposes into direct product 
$M\simeq \underset{i\in I}\prod \CH/\mathfrak p_i^{n_i}$ and 
from Lazard's theory \cite{La62} it follows that there exists a unique
up to multiplication by a unit of $\La [1/p]$ element $f\in \CH$
such that $\textrm{div}(f)= \underset{i\in I}\sum n_i\mathfrak p_i.$
The characteristic ideal  $\mathrm{char}_{\CH}M$ is defined to be 
the principal ideal generated by $f.$
 
3) The condition $M\neq \Q(m)$  implies that 
$(V^*(1))^{H_{\Q,S}}=0$ and therefore $H_{\Iw}^3(V,D)$
should vanish by Proposition~\ref{properties of SC_Iw}, ii). 
To sum up, we expect that under our assumptions the cohomology of $\SC_{\Iw}(V,D)$
is concentrated in degree $2.$  

4) Let $D^{\perp}$  denote the dual regular submodule. 
One can easily  formulate the Main Conjecture for the
dual pair $(M^*(1),D^{\perp}).$ Assume that $H^1_{\Iw}(V,D)=0.$  
From \eqref{duality for H_Iw(V,D)} it follows that
\[
H^2_{\Iw}(V^*(1),D^{\perp})^{\iota}\simeq 
H^2_{\Iw}(V,D).
\]
This isomorphism  
reflects  the conjectural functional equation
relating 
\linebreak
$L_p(M^*(1), D^{\perp},s)$ and 
$L_p(M, D,s).$ 

5) Assume that $V$ is ordinary at $p$ i.e. that the restriction of $V$ on
$G_{\Qp}$ is equipped with an encreasing exhaustive filtration  $F^iV$ such that
for all $i\in \mathbf Z$ the inertia group $I_p$ acts on 
$\gr^i (V)=F^iV/F^{i+1}V$ by $\chi^i.$ 
In \cite{Gr89}, Greenberg works with the Selmer group defined 
as follows. For each place $w$ of $F_{\infty}$ including the unique place
 above $p$  fix a decomposition group $H_w$ for $w$
in $H_{F,S}=\Gal ({\Q}^{(S)}/F_{\infty})$ and denote by  $I_w$
its inertia subgroup.  Set $A=V/T$ and $F^1A=F^1V/F^1T.$  
Define
\begin{equation}
H^1_{\mathrm{Gr}}(F_{\infty,w}, A)= \begin{cases} H^1(H_w/I_w, A^{I_w}), 
&\textrm{if $w\neq p$},\\
\ker \left (H^1(F_{\infty,p}, A) \rightarrow H^1(I_p, A/F^1A)\right )
&\textrm{if $w=p$}.
\end{cases}
\end{equation}
The Greenberg's Selmer group is defined as follows
\[
S(F_{\infty}, V/T)=\ker \left ( H^1_S(F_{\infty}, A) \rightarrow 
\underset{w\in S}\bigoplus \frac{H^1(F_{\infty,w}, A)}
{H^1_{\mathrm{Gr}}(F_{\infty,w}, A)}
\right ).
\]
It is well known that each semistable representation is ordinary 
and we set  $D=\Dst (F^1V).$ Then $\RG_{\Iw}(\bD)= \RG_{\Iw}(\Qp, F^1V)\otimes^{\mathbf L}_{\Lambda [1/p]}\CH$ and  directly from definition (\ref{definition of SC_Iw})
it follows that the  complex $\RG_{\Iw}(V,D)$ is isomorphic to 
$\widetilde \RG_{f,\Iw}(F_{\infty}/\Q,T)\otimes^{\mathbf L}_{\Lambda [1/p]}\CH$
where $\widetilde \RG_{f,\Iw}(F_{\infty}/\Q,T)$ denotes Nekov\'a\v r's Selmer complex associated to the local condition given by $F^1V.$ 
In \cite{NeSC}, Chapter 9, it is proved   that under some technical
conditions the characteristic ideal of the  Pontriagin dual 
$S(F_{\infty}, V^*(1)/T^*(1))^{\wedge}$ of 
$S(F_{\infty}, V^*(1)/T^*(1))$ coincides with the characteristic ideal of 
the second  cohomology group $\widetilde H^2_f(T)$ of 
$\widetilde \RG_{f,\Iw}(F_{\infty}/\Q,T).$ This allows to compare 
our conjecture to the Main Conjecture of \cite{Gr89}.

\section{Local structure of $p$-adic representations}

\subsection{Filtration associated to a regular submodule}
Let $D$ be a $(\Ph,N)$-submodule of $\Dst (V)$ such that 
$D\cap \F^0\Dst (V)=\{0\}.$  We associate to $D$ an increasing filtration
 $(D_i)_{i=-2}^2$ on $\Dst (V)$ setting
\begin{displaymath}
D_i\,=\,\begin{cases} 0 &\text{if $i=-2$,}\\
(1-p^{-1}\Ph^{-1})\,D+N(D^{\Ph=1}) &\text{if $i=-1$,}\\
D &\text{if $i=0$,}\\
D+\Dst (V)^{\Ph=1} \cap N^{-1}(D^{\Ph=p^{-1}})&\text{if $i=1$,}\\
\Dst (V) &\text{if $i=2$}.
\end{cases}
\end{displaymath}
It can be easily proved (see\cite{Ben11}, Lemma 2.1.5) that
$(D_i)_{i=-2}^2$ is the unique filtration on $\Dst (V)$
 such that

{\bf D1)} $D_{-2}=0$, $D_0=D$  and $D_2=\Dst (V);$

{\bf D2)} $(\Dst (V)/D_1)^{\Ph=1,N=0}=0$ and $D_{-1}= (1-p^{-1}\Ph^{-1})D_{-1}\,+N(D_{-1});$

{\bf D3)} $(D_0/D_{-1})^{\Ph=p^{-1}}=D_0/D_{-1}$ and $(D_1/D_0)^{\Ph=1}=D_1/D_0 .$ 
\newline
\,

\noindent
In addition, for the dual regular submodule  $D^{\perp}$ one has 
\[D^{\perp}_i=\text{\rm Hom}_{\Qp}(\Dst (V)/D_{-i}, \Dst(\Qp(1)) ).
\]
Let 
 $(D_i)_{i=-2}^2$ be the filtration  associated to $D$. By \eqref{equivalence Dst and Dcris for phi-gamma mod} it induces 
a filtration of $\Ddagrig (V)$ which we will denote by $(\bD_i)_{i=2}^2.$ 
Define
\[
\W=\bD_1/\bD_{-1},\quad \W_0=\gr_0 \Ddagrig (V),
\quad \W_1=\gr_0 \Ddagrig (V).
\]
Then we have an exact sequence
\begin{equation}
0\rightarrow \W_0 \rightarrow \W \rightarrow \W_1\rightarrow 0
\end{equation}
where by {\bf D1-3)}
\begin{eqnarray}
&&\CDst(\W_0)=\CDst(\W_0)^{\Ph=p^{-1}},\quad \F^0\CDst(\W_0)=0, \\
&&\CDst(\W_1)=\CDst(\W_1)^{\Ph=1}.
\end{eqnarray}

\begin{myproposition} 
\label{properties of filtration D_i}
Let $D$ be a regular submodule of $\Dst (V).$ Then

i) The canonical maps induce inclusions 
\[H^1(\bD_{-1}) \subset H^1_f(\bD)
\subset H^1(\bD_{1}) \subset H^1(\Qp,V)
\]

ii) One has $H^1_f( \bD_{-1})=H^1(\bD_{-1}).$

iii) The  sequences 
\begin{eqnarray}
&&0\rightarrow H^1(\bD_{-1}) \rightarrow H^1(\bD_1) 
\rightarrow H^1(\W)\rightarrow 0, \nonumber \\
&&0\rightarrow H^1(\bD_{-1}) \rightarrow H^1_f(\bD_1) 
\rightarrow H^1_f(\W)\rightarrow 0 \nonumber
\end{eqnarray}
are exact.
\end{myproposition}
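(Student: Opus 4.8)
The plan is to read off all three assertions from the long exact cohomology sequences of the short exact sequences of semistable $(\Ph,\Gamma)$-modules
\[
0\to\bD_{-1}\to\bD\to\W_0\to 0,\qquad 0\to\bD\to\bD_1\to\W_1\to 0,\qquad 0\to\bD_{-1}\to\bD_1\to\W\to 0
\]
(where $\bD=\bD_0$ and $\W_1=\bD_1/\bD_0$), together with $0\to\bD_1\to\Ddagrig(V)\to\Ddagrig(V)/\bD_1\to 0$, Tate duality \eqref{duality for coh phi-gamma modules}, the Euler--Poincar\'e formula \eqref{Euler-Poincare phi gamma}, and the description \eqref{exact sequence with H_f}--\eqref{dimension of H_f} of $H^0$ and $H^1_f$ of a semistable module in terms of $\CDst$. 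First I would record the relevant local vanishings. Since $D_{-1}=(1-p^{-1}\Ph^{-1})D+N(D^{\Ph=1})\subseteq D$ and $D\cap\F^0\Dst(V)=\{0\}$, one has $\F^0\CDst(\bD_{-1})=0$, whence $t_{\bD_{-1}}(\Qp)=\CDst(\bD_{-1})$ and $H^0(\Qp,\bD_{-1})=0$. Since $\Ph$ is semisimple at $p^{-1}$ ({\bf C2)}), $(1-p^{-1}\Ph^{-1})D$ is the sum of the $\Ph$-eigenspaces of $D$ of eigenvalue $\ne p^{-1}$, $N(D^{\Ph=1})$ lies in the $p^{-1}$-eigenspace, and $D^{\Ph=1}\subseteq D_{-1}$; hence $D_{-1}^{\Ph=p^{-1}}=N(D^{\Ph=1})\subseteq N(D_{-1})$, so $(D_{-1}/N(D_{-1}))^{\Ph=p^{-1}}=0$. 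Consequently $H^2(\Qp,\bD_{-1})=H^0(\Qp,\bD_{-1}^{*}(\chi))^{*}=0$, since $H^0(\Qp,\bD_{-1}^{*}(\chi))\subseteq\CDst(\bD_{-1}^{*}(\chi))^{\Ph=1,N=0}$, which is dual to $(D_{-1}/N(D_{-1}))^{\Ph=p^{-1}}=0$. Finally $H^0(\Qp,\W_0)=0$ because $\F^0\CDst(\W_0)=0$, and $H^0(\Qp,\Ddagrig(V)/\bD_1)=0$ by {\bf D2)}.

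Part (ii) follows by a dimension count. Being semistable, $\bD_{-1}$ has $\dim_E\CDst(\bD_{-1})=\mathrm{rg}_{\CR_E}(\bD_{-1})$, so \eqref{Euler-Poincare phi gamma} together with $H^0(\Qp,\bD_{-1})=H^2(\Qp,\bD_{-1})=0$ gives $\dim_E H^1(\Qp,\bD_{-1})=\dim_E\CDst(\bD_{-1})$, while \eqref{dimension of H_f} together with $\F^0\CDst(\bD_{-1})=0$ gives $\dim_E H^1_f(\Qp,\bD_{-1})=\dim_E t_{\bD_{-1}}(\Qp)+\dim_E H^0(\Qp,\bD_{-1})=\dim_E\CDst(\bD_{-1})$. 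As $H^1_f\subseteq H^1$, the two coincide.

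For (i) the outer inclusions are immediate: $H^0(\Qp,\Ddagrig(V)/\bD_1)=0$ makes $H^1(\bD_1)\to H^1(\Qp,V)$ injective, $H^0(\Qp,\W_0)=0$ makes $H^1(\bD_{-1})\to H^1(\bD)$ injective, and its image lies in $H^1_f(\bD)$ by (ii) and the functoriality of $H^1_f$ on semistable $(\Ph,\Gamma)$-modules (from the functoriality of \eqref{exact sequence with H_f}). The one substantial point is the middle inclusion $H^1_f(\bD)\subseteq H^1(\bD_1)$: from $0\to\bD\to\bD_1\to\W_1\to 0$, the kernel of $H^1(\bD)\to H^1(\bD_1)$ is $\mathrm{Im}(\delta)$ for the connecting map $\delta\colon H^0(\Qp,\W_1)\to H^1(\Qp,\bD)$, so I must show $\mathrm{Im}(\delta)\cap H^1_f(\Qp,\bD)=0$. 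Here $\delta(x)$ is represented by the sub-$(\Ph,\Gamma)$-module $E_x\subseteq\bD_1$ obtained as the preimage of the trivial rank-one submodule generated by $x$; it is semistable, and $\CDst(E_x)$ is a $(\Ph,N)$-submodule $D_x\subseteq D_1$ containing $D$ with $D_x/D$ a line on which $N=0$ and $\Ph=1$. If $\delta(x)\in H^1_f(\bD)$ then $\dim_E\CDcris(E_x)=\dim_E\CDcris(\bD)+1$, which forces $N\tilde x\in N(D)$ for any lift $\tilde x\in D_x$ of a generator of $D_x/D$, and then, using {\bf D1)}--{\bf D3)} and $D\cap\F^0\Dst(V)=0$ to identify the unique lift of $x$ to $\F^0\CDst(\bD_1)$, one deduces $\Ph\tilde x=\tilde x$ and $N\tilde x=0$, i.e.\ $x$ lifts to $H^0(\Qp,\bD_1)$ and $\delta(x)=0$. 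This step, the only one using the shape of the filtration $(D_i)$ non-formally, is the expected main obstacle.

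For (iii), in the long exact sequence of $0\to\bD_{-1}\to\bD_1\to\W\to 0$ the connecting map $H^1(\W)\to H^2(\bD_{-1})$ vanishes because $H^2(\Qp,\bD_{-1})=0$, and the connecting map $\delta_0\colon H^0(\W)\to H^1(\bD_{-1})$ reduces to the middle inclusion of (i): for $x\in H^0(\W)\subseteq H^0(\W_1)$ one has $\delta_0(x)=\delta(x)\in H^1(\bD_{-1})=H^1_f(\bD_{-1})\subseteq H^1_f(\bD)$, hence $\delta_0(x)\in\mathrm{Im}(\delta)\cap H^1_f(\bD)=0$; this yields the first exact sequence. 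For the second, functoriality of $H^1_f$ makes $0\to H^1(\bD_{-1})\to H^1_f(\bD_1)\to H^1_f(\W)$ a complex, exact on the left and in the middle by the first sequence and (ii) (indeed $\ker(H^1_f(\bD_1)\to H^1_f(\W))=H^1_f(\bD_1)\cap H^1(\bD_{-1})=H^1(\bD_{-1})$), and exact on the right by the dimension identity $\dim_E H^1_f(\bD_1)=\dim_E H^1(\bD_{-1})+\dim_E H^1_f(\W)$, which one gets by combining \eqref{dimension of H_f} with the strict exactness of $\CDst$ and of its Hodge filtration along $0\to\bD_{-1}\to\bD_1\to\W\to 0$ and with $\F^0\CDst(\bD_{-1})=0$.
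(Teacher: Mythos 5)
Your handling of (ii), of the two outer inclusions in (i), and of the first sequence in (iii) follows essentially the paper's own route (Euler--Poincar\'e, duality and \textbf{D2)} to get $H^0(\Qp,\bD_{-1})=H^2(\Qp,\bD_{-1})=0$, then the long exact sequences of the filtration), and is correct. You genuinely diverge at the middle inclusion $H^1_f(\bD)\subseteq H^1(\bD_1)$, and that is exactly where your argument breaks down. The paper never analyses $\mathrm{Im}(\delta)$: it shows directly that $H^1_f(\bD)\to H^1_f(\Qp,V)$ is injective by comparing the exponential maps of $\bD$ and of $V$ (both are isomorphisms onto $H^1_f$ by \textbf{C3)}, and $t_{\bD}(\Qp)=D\to t_V(\Qp)$ is injective because $D\cap\F^0\Dst(V)=0$); since $H^1(\bD)\to H^1(\Qp,V)$ factors through $H^1(\bD_1)$, the inclusion follows. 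Your extension-theoretic route can be made to work, but not as you wrote it. Crystallinity of $E_x$ gives a lift $\tilde x'$ of $x$ with $N\tilde x'=0$, and since $1-\Ph$ is bijective on $\CDcris(\bD)=D^{N=0}$ (its kernel sits in $\Dc(V)^{\Ph=1}=0$), one can correct $\tilde x'$ so that also $\Ph\tilde x'=\tilde x'$. But this lift has no reason to agree with the \emph{unique} lift of $x$ lying in $\F^0\CDst(E_x)$: the two differ by an arbitrary element of $D$, and nothing in \textbf{D1)}--\textbf{D3)} forces that element to vanish, so you cannot conclude that $x$ lifts to $H^0(\Qp,\bD_1)=\F^0\CDst(\bD_1)^{\Ph=1,N=0}$. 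The correct ending of your argument is shorter and never mentions $\F^0$: for $x\neq 0$ the module $E_x$ embeds in $\bD_1$, so $\tilde x'$ is a nonzero vector of $\Dst(V)^{\Ph=1,N=0}=\Dc(V)^{\Ph=1}$, which is zero by \textbf{C3)} --- a contradiction showing $\mathrm{Im}(\delta)\cap H^1_f(\bD)=0$.

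A second, smaller problem concerns your final dimension count. Substituting \eqref{dimension of H_f} and strictness of the Hodge filtration, the identity $\dim_E H^1_f(\bD_1)=\dim_E H^1(\bD_{-1})+\dim_E H^1_f(\W)$ reduces to $\dim_E H^0(\Qp,\bD_1)=\dim_E H^0(\Qp,\W)$. The left-hand side vanishes by \textbf{C3)}, but $H^0(\Qp,\W)=\F^0\CDst(\W)^{\Ph=1,N=0}$ need not: its vanishing is imposed as a separate hypothesis in Proposition~\ref{Triviality of H^1_Iw} and in condition \textbf{M)}. So the surjectivity of $H^1_f(\bD_1)\to H^1_f(\W)$ as you argue it holds only when $H^0(\Qp,\W)=0$. (The paper's own one-line justification of the second sequence quietly relies on the same vanishing, via Corollary 1.4.6 of \cite{Ben11}, whose hypotheses include the vanishing of $H^0$ of the quotient; but in a self-contained proof you must either add this hypothesis or restrict the claimed surjectivity.)
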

\begin{proof} 1) By {\bf D2)} one has 
\[\Hom (D_{-1}, \Dst (\Qp (1)))^{\Ph=1, N=0}=0
\]
and by the Poincar\'e duality 
\begin{equation}
\label{nullity of H^2(D-1)}
H^2 ( \bD_{-1}) \simeq H^0 ( \bD_{-1}^*(\chi))^*=0.
\end{equation}
Now \eqref{Euler-Poincare phi gamma} and \eqref{dimension of H_f} implies
that $H^1_f( \bD_{-1})=H^1 ( \bD_{-1})$ and  ii) 
is proved. 

2) The exact sequence
\[
0\rightarrow \bD_1 \rightarrow \Ddagrig (V) \rightarrow \gr_2\Ddagrig (V)
\rightarrow 0
\]
gives an exact sequence
\[
0\rightarrow H^0 \left (\gr_2\Ddagrig (V)\right ) \rightarrow
H^1( \bD_1)\rightarrow  H^1(\Qp,V).
\]
From {\bf D2)} it follows that
\[
 H^0 \left (\gr_2\Ddagrig (V)\right )=
\F^0(\Dst (V)/D_1)^{\Ph=1,N=0}=0
\]
and the injectivity of $H^1( \bD_1)\rightarrow  H^1(\Qp,V)$
is proved. Since $\F^0\CDcris (\bD)=0$ and   $\CDcris (\bD)^{\Ph=1}=\Dc (V)^{\Ph=1}=0$ by
{\bf C3)}, the exact sequence \eqref{exact sequence with H_f}
induces a commutative diagram 
\begin{displaymath}
\xymatrix{
\CDcris (\bD) \ar[r]^{\exp} \ar[d] &H^1_f(\bD) \ar[d] \\
t_V(\Qp) \ar[r]^{\exp}  &H^1_f(\Qp,V).}
\end{displaymath}
where the exponential maps and the left vertical map are isomorphisms.
Thus $H^1_f(\bD)\rightarrow  H^1_f(\Qp,V)$ is an injection.
The same argument together with ii) proves that 
$H^1(\bD_{-1}) \subset H^1_f(\bD).$ This implies i).

3) Since the sequence
\[
0\rightarrow  H^1(\bD_{-1}) \rightarrow H^1(\bD_1) 
\rightarrow H^1(\W)\rightarrow H^2 ( \bD_{-1})
\]
is exact, iii) follows from ii) and \eqref{nullity of H^2(D-1)}.
\end{proof} 

Assume now that the canonical projection $\CDst (\bD) \rightarrow t_V(\Qp)$
is an isomorphism  i.e. that 
\begin{equation}
\label{Dst=D plus F^0}
\Dst (V)=D \oplus \F^0\Dst (V)
\end{equation}
as $E$-vector spaces. In this case the structure of $\W$ can be completely 
determined if we make the following  additional 
assumption  (see \cite{Gr94}, \cite{Ben11}).  

{\bf U)} The $(\Ph,\Gamma)$-module $\Ddagrig (V)$ has no saturated  \textit{crystalline} subquotient $U$ sitting in a non split  exact sequence of the form 
\begin{equation}
\label{definition U}
0 \rightarrow  \CR_E(\vert x\vert x^k)
\rightarrow U \rightarrow \CR_E (x^{m}) \rightarrow 0.
\end{equation}
\newline
\,

\noindent
We remark that if $k\leqslant m,$ then 
$H^1_f(\CR_E(\vert x\vert x^{k-m}))=0$ and there is no 
non trivial crystalline  extension \eqref{definition U}.
If $k>m,$ it follows from \eqref{dimension of H_f} that  
\linebreak
$\dim_EH^1_f(\CR_E(\vert x\vert x^{k-m}))=1$ and 
therefore there exists 
a unique (up to isomorphism) crystalline $(\Ph,\Gamma)$-module $U$
of the form \eqref{definition U}.

\begin{myproposition} 
\label{structure of W}
Let $D$ be a $(\Ph,N)$-submodule of $\Dst (V)$
which satisfies \eqref{Dst=D plus F^0}. Assume that the condition 
{\bf U)} holds. Then 

i) There exists a unique decomposition
\begin{equation}
\label{W=Mplus A_0plus A_1}
\W\simeq \A_0\oplus \A_1\oplus \bM 
\end{equation}
where $\A_0$ and  $\A_1$ are direct summands of  $\W_0$ and $\W_1$
of ranks 
\linebreak
$\dim_{E} H^0 (\W^*(\chi))$ and $\dim_{E}H^0(\W)$ respectively.
Moreover, $\bM$ is inserted in an exact sequence
\[
0\rightarrow \bM_0\xrightarrow{f} \bM \xrightarrow{g} \bM_1
\rightarrow 0
\]
where $\W_0\simeq \A_0\oplus \bM_0$,  $\W_1 \simeq \A_1\oplus \bM_1$  and
$\mathrm{rank} (\bM_0)=\mathrm{rank} (\bM_1).$

ii) One has 
\[
\dim_{E}H^1(\bM)=2e,\quad \dim_{E}H^1_f(\bM)=e,\quad\text{ where
$e=\mathrm{rank} (\bM_0)=\mathrm{rank} (\bM_1).$}
\]

iii) Consider the exact sequence
\[
0\rightarrow H^0(\bM_1) \xrightarrow{\delta_0} H^1(\bM_0)
\xrightarrow{f_1} H^1(\bM)
\xrightarrow{g_1} H^1(\bM_1)
\xrightarrow{\delta_1}H^2(\bM_0)
\rightarrow 0.
\]
Then $H^1(\bM_0) \simeq \text{\rm Im} (\delta_0)\oplus H^1_f(\bM_0),$ $\text{\rm Im} (f_1) =H^1_f (\bM)$ and 
\linebreak
$H^1(\bM_1) \simeq \text{\rm Im} (g_1) \oplus H^1_f(\bM_1).$
\end{myproposition}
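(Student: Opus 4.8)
The plan, following \cite{Ben11}, is to first pin down the structure of the graded pieces $\W_0,\W_1$, then use condition {\bf U)} to split their ``trivial'' summands off $\W$, and finally read off the cohomology of $\bM$ by a dimension count.

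\textbf{Structure of $\W_0$ and $\W_1$.} By {\bf D3)} both modules are crystalline: on $\CDst(\W_0)$ one has $\Ph=p^{-1}$, so $N=0$ (the only possible target of $N$ is the vanishing $\Ph=p^{-2}$ eigenspace), and symmetrically on $\W_1$. Also $\F^0\CDst(\W_0)=0$, whereas $\F^0\CDst(\W_1)=\CDst(\W_1)$, since $D_0\subseteq D_1$ and $D_0\oplus\F^0\Dst(V)=\Dst(V)$ force $D_1+\F^0\Dst(V)=\Dst(V)$, i.e.\ $\F^0(D_1/D_0)=D_1/D_0$. Hence by the equivalence \eqref{equivalence Dst and Dcris for phi-gamma mod} and Proposition~1.5.8 of \cite{Ben11} (applied to $\W_0$, and to $\W_1^*(\chi)$, which is of the same type because $D^{\perp}\subset\Dst(V^*(1))$ is again regular) one obtains decompositions of $\W_0,\W_1$ into rank one crystalline $(\Ph,\Gamma)$-modules; the summands of $\W_0$ isomorphic to $\CR_E(\chi)=\CR_E(\vert x\vert x)$ are exactly those detected by $H^0(\W_0^*(\chi))$, and the trivial summands of $\W_1$ the ones detected by the image of $H^0(\W)$ in $H^0(\W_1)$ (note $H^0(\W_0)=0$). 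I would take $\A_0\subset\W_0$ to be the $\CR_E(\chi)$-isotypic summand, $\A_1\subset\W_1$ a direct-summand complement of rank $\dim_E H^0(\W)$ chosen so that it lifts to $\W$, and $\bM_0=\W_0/\A_0$, $\bM_1=\W_1/\A_1$; one finds $\mathrm{rank}\,\bM_0=\mathrm{rank}\,\bM_1$.

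\textbf{The decomposition (part i).} One must show that the extension classes of
\[
0\to\A_0\to\W\to\W/\A_0\to 0, \qquad 0\to\ker(\W\twoheadrightarrow\A_1)\to\W\to\A_1\to 0
\]
vanish. For the first, the pull-back of $\W$ along $\bM_0\hookrightarrow\W/\A_0$ equals $\W_0=\A_0\oplus\bM_0$, hence is split, so the obstruction already lives in $\Ext^1(\W_1,\A_0)$; each relevant pushout is, after a twist, a saturated crystalline subquotient of $\Ddagrig(V)$ of the form \eqref{definition U} with $k>m$, hence splits by {\bf U)}. The analysis for $\A_1$ is similar, after which $\bM$ is the preimage in $\bD_1/\bD_{-1}$ of $\bM_1$ and sits in $0\to\bM_0\to\bM\to\bM_1\to 0$; uniqueness holds because $\A_0,\A_1$ are characterised by $H^0$ of the relevant (dual) modules. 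The main obstacle here is to verify that the pushouts that occur really are crystalline (so that {\bf U)} applies, or, in the subcase $k\leqslant m$, split automatically): this uses the slope and Hodge--Tate weight constraints of {\bf D1--3)} — e.g.\ $\W_0$ has weights $\leqslant-1$ and no nonzero $\Gamma$-invariants — to force the monodromy cross-terms on the $\A_0$- and $\A_1$-parts to vanish and to confine the possible semistable non-crystalline contributions to $\bM$.

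\textbf{Cohomology of $\bM$ (parts ii, iii).} Write $e=\mathrm{rank}\,\bM_0=\mathrm{rank}\,\bM_1$. Since the summands of $\bM_0$ have $\Ph=p^{-1}$ one has $H^0(\bM_0)=0$, while $H^2(\bM_0)\simeq H^0(\bM_0^*(\chi))^*$ by \eqref{duality for coh phi-gamma modules} has dimension $e$; then \eqref{Euler-Poincare phi gamma} gives $\dim_E H^1(\bM_0)=2e$ and \eqref{dimension of H_f} with $\F^0\CDst(\bM_0)=0$ gives $\dim_E H^1_f(\bM_0)=e$; dually $H^2(\bM_1)=0$, $\dim_E H^1(\bM_1)=2e$, $\dim_E H^1_f(\bM_1)=e$. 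Because $\bM$ is a direct-summand complement in $\W$ of a submodule carrying all of $H^0(\W)$ one has $H^0(\bM)=0$, and dually $H^2(\bM)=0$; the long exact sequence of $0\to\bM_0\to\bM\to\bM_1\to 0$, together with \eqref{Euler-Poincare phi gamma} and \eqref{dimension of H_f}, then yields $\dim_E H^1(\bM)=2e$ and $\dim_E H^1_f(\bM)=\dim_E t_{\bM}(\Qp)=e$, which is (ii). For (iii), exactness of the five-term sequence is that long exact sequence truncated using $H^0(\bM)=0$ ($\Rightarrow$ $\delta_0$ injective) and $H^2(\bM)=0$ ($\Rightarrow$ $\delta_1$ surjective). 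Since $\mathrm{Im}(\delta_0)=\ker(f_1)$ has dimension $\dim_E H^0(\bM_1)=e$ and $\dim_E H^1(\bM_0)=2e$, the decomposition $H^1(\bM_0)\simeq\mathrm{Im}(\delta_0)\oplus H^1_f(\bM_0)$ amounts to $\mathrm{Im}(\delta_0)\cap H^1_f(\bM_0)=0$, i.e.\ (by functoriality of $H^1_f$) to injectivity of $f_1$ on $H^1_f(\bM_0)$; I would obtain this from the orthogonality $H^1_f(\bM_0)^{\perp}=H^1_f(\bM_0^*(\chi))$ (\cite{Ben11}, Corollary~1.4.10) together with the fact that $\mathrm{Im}(\delta_0)^{\perp}$ is the image of $H^1(\bM^*(\chi))$ in $H^1(\bM_0^*(\chi))$, these two subspaces being complementary by a dimension count (the dual of the corresponding assertion for $\bM_1$). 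Finally $\mathrm{Im}(f_1)=H^1_f(\bM)$ follows, since $f_1(H^1(\bM_0))=f_1(H^1_f(\bM_0))\subseteq H^1_f(\bM)$ by functoriality and $\dim_E\mathrm{Im}(f_1)=\dim_E\ker(g_1)=e=\dim_E H^1_f(\bM)$.
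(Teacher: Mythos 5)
The paper offers no proof of this proposition beyond the citation of \cite{Ben11}, Proposition 2.1.7 and Lemma 2.1.8, so I can only measure your reconstruction against the argument there. Your skeleton — decompose $\W_0,\W_1$ into rank one crystalline modules, split off $\A_0,\A_1$ using {\bf U)}, then count dimensions for $\bM$ — is the right one, and your part ii) is essentially correct. But there are genuine gaps in i) and iii).

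For part i): your identification of $\A_0$ is wrong. $H^0(\W_0^*(\chi))$ does not detect the $\CR_E(\chi)$-summands of $\W_0$: since $\Hom(\CR_E(\vert x\vert x^m),\CR_E(\vert x\vert x))=H^0(\CR_E(x^{1-m}))$ is one-dimensional for \emph{every} $m\geqslant 1$ (multiplication by $t^{m-1}$), the space $H^0(\W_0^*(\chi))$ has full dimension $\mathrm{rank}\,\W_0$. In any case $\A_0$ must have rank $\dim_EH^0(\W^*(\chi))$, which is in general strictly smaller than the number of $\CR_E(\chi)$-summands of $\W_0$; it consists of those rank one summands whose projections extend from $\W_0$ to $\W$, exactly as you (correctly) arrange for $\A_1$ on the other side. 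More seriously, you flag as "the main obstacle" the verification that the relevant pushout/pullback extensions are crystalline so that {\bf U)} applies — but they are \emph{not} crystalline in general: the non-crystalline ones are precisely what survives inside $\bM$. The missing idea is the monodromy operator. Since $N$ carries the $\Ph=1$ eigenspace $\CDst(\W_1)$ into the $\Ph=p^{-1}$ eigenspace $\CDst(\W_0)$, the class of $0\to\W_0\to\W\to\W_1\to0$ decomposes into a crystalline part, which {\bf U)} kills, and the part recorded by $N\,:\,\CDst(\W_1)\to\CDst(\W_0)$; one then takes $\A_1$ over $\ker N$, $\A_0$ over a complement of $\mathrm{Im}\,N$, and $\bM$ is characterised by $N$ inducing an isomorphism $\CDst(\bM_1)\iso\CDst(\bM_0)$. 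This is also the only source of the equality $\mathrm{rank}\,\bM_0=\mathrm{rank}\,\bM_1$, which you assert with "one finds" but never prove; it amounts to two different connecting maps having equal rank and does not follow from the constructions you describe.

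For part iii): your argument for $H^1(\bM_0)\simeq\mathrm{Im}(\delta_0)\oplus H^1_f(\bM_0)$ is circular. By the adjunction of connecting maps under the pairing \eqref{duality for coh phi-gamma modules}, that statement is exactly dual to the third decomposition $H^1(\bM_1')\simeq\mathrm{Im}(g_1')\oplus H^1_f(\bM_1')$ for $\bM'=\bM^*(\chi)$, and that in turn dualises back to the first decomposition for $\bM$; you never exit this loop. The statement is in fact false without {\bf U)}: if $\bM$ were a non-split \emph{crystalline} extension of $\CR_E$ by $\CR_E(\vert x\vert x)$ (such exist, since $\dim_EH^1_f(\CR_E(\vert x\vert x))=1$ by \eqref{dimension of H_f}), then $\mathrm{Im}(\delta_0)$ would lie inside $H^1_f(\bM_0)$. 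The correct argument invokes {\bf U)} directly: a nonzero $x\in H^0(\bM_1)$ with $\delta_0(x)\in H^1_f(\bM_0)$ produces, after projecting $\bM_0$ onto a suitable rank one quotient, a non-split saturated crystalline subquotient of the forbidden form \eqref{definition U}. Once $\mathrm{Im}(\delta_0)\cap H^1_f(\bM_0)=0$ is secured this way, your dimension counts do yield the remaining assertions of iii).
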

\begin{proof} See \cite{Ben11}, Proposition 2.1.7 and Lemma 2.1.8.
\end{proof}

\subsection{The weight $\leqslant -2$ case}
We return to the study of the  cohomology of $p$-adic 
representations. Let $V$ is the $p$-adic realisation of a pure simple  motive
$M \neq \Q(m)$.  In this subsection we assume that $V$ satisfies the conditions 
{\bf C1-4a)} of Section 2.1. Let $D$ be a regular submodule of
$\Dst (V)$ and $\bD$ the associated $(\Ph,\Gamma)$-submodule
of $\Ddagrig (V).$
In the remainder of this paper we write 
$H^1(\bD)$ instead $H^1(\Qp,\bD)$ to simplify notation.

Define
\[
H^1_{f,\{p\}}(\Q, V)= \ker \left (
H^1_S(\Q,V) \rightarrow \underset{v\in S-\{p\}}\bigoplus
\frac{H^1(\Q_v,V)}{H^1(\Q_v,V)} \right ).
\]
Then \eqref{Poitou Tate with H_f}  gives an exact sequence 
\begin{equation}
0\rightarrow H^1_f(\Q,V)\rightarrow H^1_{f,\{p\}}(\Q,V)
\rightarrow \frac{H^1(\Q_p,V)}{H^1_f(\Qp,V)}
\rightarrow 0.
\end{equation}
From the regularity of $D$ we have a decomposition
\[
H^1_f(\Qp,V)= H^1_f(\Q,V) \oplus H^1_f(\bD)
\]
and therefore the restriction map induces an isomorphism
\[
H^1_{f,\{p\}}(\Q,V) \simeq 
\frac{H^1 (\Qp,V)}{H^1_f(\bD)}.
\]
Let $H^1_D(\Q,V)$ denote the inverse image of 
$H^1(\bD_{1})/H^1_f(\bD)$ under this isomorphism.
By Proposition 5  iii) we have an injection
\begin{equation}
\kappa_D\,:\,H^1_D(\Q,V) \hookrightarrow H^1(\W) 
\end{equation}
Since by {\bf D3)} $\CDcris (\W_0)^{\Ph=p^{-1}}= \CDcris (\W_0)$ 
and $\F^0\CDcris (\W_0)=0,$ the results of Section 1.8 give a canonical
decomposition 
\begin{equation}
\label{decomposition of W_0}
H^1(\W_0)=H^1_f(\W_0)\oplus H^1_c(\W_0).
\end{equation}

\begin{myproposition}
\label{Triviality of H^1_Iw}
 Assume that the weak Leopoldt conjecture holds for $V^*(1)$ and that
 
  a) $H^0(\W)=0.$ 
 
 b) The map $H^1_c(\W_0)\rightarrow H^1(\W)$ is injective.

c) One has 
\begin{equation}
\label{condition for triviality of H^1_Iw}
 \im (\kappa_D) \cap H^1_c(\W_0)  =\{0\} \quad \mathrm{in}\quad H^1(\W).
\end{equation}
Then $H^1_{\Iw}(V,D)=0$ and therefore $H^2_{\Iw}(V,D)$
is $\CH$-torsion.
\end{myproposition}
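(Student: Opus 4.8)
The plan is to establish two facts about the coadmissible $\CH$-module $H^1_{\Iw}(V,D)$: first, that it is torsion-free — hence, by the structure of coadmissible modules recalled above together with the fact that $\CH$ is a B\'ezout domain, a free $\CH$-module of some rank $r$; and second, that $r=0$, equivalently that $H^1_{\Iw}(V,D)_{\Gamma}=0$. Once both are known, $H^1_{\Iw}(V,D)=0$, and then $H^2_{\Iw}(V,D)$ is $\CH$-torsion by Pottharst's properties of $\SC_{\Iw}(V,D)$, so that $\RG_{\Iw}(V,D)$ is concentrated in degree $2$, exactly as predicted by the Main Conjecture.

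For torsion-freeness: the Hochschild--Serre sequences for $\SC_{\Iw}(V,D)$ and the vanishing $H^0_{\Iw}(V,D)=0$ yield $H^1_{\Iw}(V,D)^{\Gamma}\simeq H^0(V,D)$, and $H^0(V,D)=0$ because $H^0_S(\Q,V)=0$ by {\bf C1)} while the degree-zero cohomology of each local condition $U^{\bullet}_v(V,D)$ injects into $H^0(\Q_v,V)$. On the other hand, the $\CH$-torsion submodule of $H^1_{\Iw}(V,D)$ is a finite-dimensional $E$-vector space: through the exact sequence $0\to H^1_{\Iw}(V,D)\to \bigl(H^1_{\Iw,S}(\Q,T)\otimes\CH\bigr)\oplus H^1_{\Iw}(\bD)\oplus\bigoplus_{v\neq p}\bigl(H^1_{\Iw}(\Q_v,T)\otimes\CH\bigr)\to\bigoplus_{v\in S}H^1_{\Iw}(\Q_v,T)\otimes\CH$ coming from the defining triangle of $\SC_{\Iw}(V,D)$ (using $H^0_{\Iw}(\Q_v,T)=0$), this is inherited from the finiteness of $H^1_{\Iw}(\bD)_{\mathrm{tor}}$ and of the torsion submodule of $H^1_{\Iw,S}(\Q,T)$. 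Since a nonzero continuous representation of the procyclic group $\Gamma\simeq\Zp$ on a finite-dimensional $E$-vector space has nonzero invariants, and $H^1_{\Iw}(V,D)^{\Gamma}=0$, that torsion submodule must vanish; thus $H^1_{\Iw}(V,D)$ is torsion-free, hence free of rank $r=\dim_E H^1_{\Iw}(V,D)_{\Gamma}$.

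It remains to show $H^1_{\Iw}(V,D)_{\Gamma}=0$. By Hochschild--Serre this group injects into $H^1(V,D)$, so one must prove that its image — the module of universal norms — vanishes, and this is where the weak Leopoldt conjecture for $V^*(1)$ and the hypotheses {\bf (a)--(c)} enter. The former makes $H^2_{\Iw,S}(\Q,T)$ a torsion $\Lambda$-module and, via the Iwasawa Poitou--Tate sequence, makes $H^1_{\Iw,S}(\Q,T)\otimes\CH\to\bigoplus_{v\in S}H^1_{\Iw}(\Q_v,T)\otimes\CH$ injective; together with the exact sequence above this presents $H^1_{\Iw}(V,D)$ as the preimage, under localization at $p$, of the local condition $H^1_{\Iw}(\bD)\subset H^1_{\Iw}(\Q_p,T)\otimes\CH\simeq H^1_{\Iw}(\Ddagrig(V))$. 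Passing to $\Gamma$-coinvariants, the proposition on coinvariants of isoclinic modules — which gives $H^1_{\Iw}(\bD')_{\Gamma}=H^1_c(\Q_p,\bD')$ for $\bD'$ with $\CDcris(\bD')^{\Ph=p^{-1}}=\CDcris(\bD')$ and $\F^0\CDcris(\bD')=0$ — applies to the isoclinic quotient $\W_0$, so that a universal-norm class, transported through the five-step filtration $(\bD_i)$ into $H^1(\W)$, lands in $H^1_c(\Q_p,\W_0)$; being the image of a global class in $H^1_D(\Q,V)$, it also lies in $\mathrm{Im}(\kappa_D)$. Hypothesis {\bf (a)} ($H^0(\W)=0$) and hypothesis {\bf (b)} (injectivity of $H^1_c(\Q_p,\W_0)\hookrightarrow H^1(\W)$) are what keep the graded exact sequences and the decomposition $H^1(\Q_p,\W_0)=H^1_f(\Q_p,\W_0)\oplus H^1_c(\Q_p,\W_0)$ usable here, and then hypothesis {\bf (c)}, $\mathrm{Im}(\kappa_D)\cap H^1_c(\Q_p,\W_0)=\{0\}$, forces the universal norms to vanish. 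Hence $H^1_{\Iw}(V,D)_{\Gamma}=0$, $r=0$, and $H^1_{\Iw}(V,D)=0$. The main obstacle is precisely this last passage — locating the image of $H^1_{\Iw}(V,D)_{\Gamma}$ in $H^1(\W)$ and showing it is contained in $\mathrm{Im}(\kappa_D)\cap H^1_c(\Q_p,\W_0)$ — since it requires carrying the defining triangle, the Poitou--Tate sequence and the filtration $(\bD_i)$ along simultaneously and keeping track of how norm-compatible classes at $p$ distribute among the $H^1_f/H^1_c$-decompositions of the graded pieces $\gr_i\Ddagrig(V)$; conditions {\bf (a)} and {\bf (b)} are exactly the hypotheses that make this bookkeeping possible, after which {\bf (c)} concludes.
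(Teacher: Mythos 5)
Your skeleton (show $H^1_{\Iw}(V,D)$ is torsion-free, hence free, then show its $\Gamma$-coinvariants vanish) can be made to work, but as written it has two genuine gaps. The first is in the torsion-freeness step: the claim that a nonzero continuous finite-dimensional representation of $\Gamma\simeq\Zp$ over $E$ has nonzero invariants is false. The character $\g_0\mapsto 1+p$ is continuous and has neither invariants nor coinvariants; equivalently, a torsion coadmissible module $\CH/\mathfrak p$ with $\mathfrak p$ different from the augmentation ideal is invisible to both $(-)^{\Gamma}$ and $(-)_{\Gamma}$, so the vanishing of $H^1_{\Iw}(V,D)^{\Gamma}$ cannot detect torsion. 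Torsion-freeness is nevertheless true, but for a different reason: the weak Leopoldt conjecture for $V^*(1)$ makes $H^1_{\Iw,S}(\Q,T)\otimes_{\La}\CH\to\bigoplus_{v\in S}H^1_{\Iw}(\Q_v,T)\otimes_{\La}\CH$ injective, so that $H^1_{\Iw}(V,D)=\bigl(H^1_{\Iw,S}(\Q,T)\otimes_{\La}\CH\bigr)\cap H^1_{\Iw}(\bD)$ inside $H^1_{\Iw}(\Qp,T)\otimes_{\La}\CH$; in particular it embeds into $H^1_{\Iw,S}(\Q,T)\otimes_{\La}\CH$, which is free because its torsion is governed by $T^{H_{\Q,S}}=0$.

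The second, more serious, gap is that the step you yourself label ``the main obstacle'' is the actual content of the proposition and is never carried out. To deduce $H^1_{\Iw}(V,D)_{\Gamma}=0$ from hypothesis (c) you must know that the passage from $H^1_{\Iw}(V,D)_{\Gamma}$ (or from $H^1_{\Iw}(\bD)_{\Gamma}$) to $H^1(\Qp,V)$ and then to $H^1(\W)$ loses nothing; but $H^1(\bD)\to H^1(\Qp,V)$ has kernel coming from $H^0(\W_1)$ (via $H^0(\gr_2\Ddagrig(V))=0$ and the sequence $0\to\bD\to\bD_1\to\W_1\to 0$), so one must first prove $H^0(\W_1)\cap H^1_{\Iw}(\bD)_{\Gamma}=0$ in $H^1(\bD)$. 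This is precisely where (a) and (b) enter: (a) gives the injection $H^0(\W_1)\hookrightarrow H^1(\W_0)$, Proposition~\ref{coinvariants od isoclinic modules} gives that the image of $H^1_{\Iw}(\bD)_{\Gamma}$ in $H^1(\W_0)$ lies in $H^1_c(\W_0)$, and (b) then yields $H^1_c(\W_0)\cap H^0(\W_1)=0$. Only after this does one get $H^1_{\Iw}(\bD)_{\Gamma}\hookrightarrow H^1(\bD_1)\subset H^1(\Qp,V)$ and can one invoke (c) via the injectivity of $\kappa_D$. Note also that the paper finishes not by computing the coinvariants of the intersection directly but by an elementary lemma (for $A,B$ submodules of a free $\CH$-module $M$ with $A_{\Gamma},B_{\Gamma}\to M_{\Gamma}$ injective, $A_{\Gamma}\cap B_{\Gamma}=0$ forces $A\cap B=0$); your route through $(A\cap B)_{\Gamma}$ requires exactly the same injectivity statements, so it offers no shortcut. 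Until these are supplied, the argument is a plan rather than a proof.
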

\begin{proof} The reader can compare this proof to the proof
of Theorem 5.1.3 of \cite{Ben14}. We will use the following
elementary lemma. 

\begin{mylemma}
\label{lemmaAB}
 Let $A$ and $B$ be two submodules of a finitely-generated 
free $\mathcal H$-module $M$.
Assume that the  natural maps $A_{\Gamma_1} \rightarrow  M_{\Gamma_1}$ and
 $B_{\Gamma_1} \rightarrow  M_{\Gamma_1}$ are both
injective. Then $A_{\Gamma_1}\cap B_{\Gamma_1}=\{0\}$ implies that $A\cap B=\{0\}.$
\end{mylemma}
\begin{proof} This is Lemma 5.1.4.1 of \cite{Ben14}. 
\end{proof}

We prove Proposition~\ref{Triviality of H^1_Iw}.
Because the weak Leopoldt conjecture holds for $V^*(1)$
the group  $H^1_{\Iw,S}(\Q,T)$ injects into 
$\oplus_{v\in S} H^1_{\Iw}(\Q_v,T).$
 Since 
\linebreak 
$H^1_{\Iw,f}(\Q_v,T)=H^1_{\Iw}(\Q_v,T)$
for $v\neq p,$ by the definition of the complex $\SC_{\Iw}(V,D)$
one has 
\[
H^1_{\Iw}(V,D)=
\left ( H^1_{\Iw,S}(\Q,T)\otimes_{\La}\CH \right ) \cap H^1_{\Iw}(\bD)
\qquad \mathrm{in}\quad H^1_{\Iw}(\Qp, T)\otimes _{\La}\CH.
\]
The $\La$-torsion part of $H^1_{\Iw,S}(\Q,T)$ is isomorphic
to $T^{H_{\Q,S}}=0$ and therefore  $A=H^1_{\Iw,S}(\Q,T)\otimes_{\La}\CH$
is a free $\CH$-module. Moreover  $A_{\Gamma}\subset H^1_{f,\{p\}}(\Q,V)$
Set $B=H^1_{\Iw}(\bD)/H^1_{\Iw}(\bD)_{\CH-\mathrm{tor}}.$ 
The $\CH$-torsion part of $H^1_{\Iw}(\bD)$ is contained in
$T^{H_{\Qp}}\otimes_{\La}\CH$ and 
$\dim_E\left (V^{H_{\Qp}}\right )_{\Gamma}=\dim_E
\left (V^{H_{\Qp}}\right )^{\Gamma}=0$
and therefore  $B_{\Gamma}=H^1_{\Iw}(\bD)_{\Gamma}.$ 

We will prove that  $B_{\Gamma}=H^1_{\Iw}(\bD)_{\Gamma}$ injects into 
$H^1(\bD_1)\subset H^1(\Qp,V).$  The exact sequence
\begin{equation}
\label{exact sequence quotient W_0}
0\rightarrow \bD\rightarrow \bD_1 \rightarrow \W_1
\rightarrow 0 \nonumber
\end{equation}
gives rise to an exact sequence
$
0\rightarrow H^0(\W_1) \rightarrow H^1(\bD)  \rightarrow H^1(\bD_1)
$
and therefore it is sufficient to show that 
\[H^0(\W_1)\cap H^1_{\Iw}(\bD)_{\Gamma}=0\quad \textrm{in}\quad H^1(\bD).
\]
Set $Z=H^0(\W_1)\cap H^1_{\Iw}(\bD)_{\Gamma}.$
Let $f\,:\,H^1(\bD) \rightarrow H^1(\W_0)$ denote 
the map induced by the natural projection $\bD\rightarrow \W_0.$
Since $H^0(\W)=0,$ the exact sequence 
$0\rightarrow \W_0\rightarrow \W\rightarrow \W_1
\rightarrow 0$
 induces an injection $H^0(\W_1)\hookrightarrow H^1(\W_0).$
 Thus $Z\cap \ker (f)=\{0\}.$ On the other hand, 
 by Proposition~\ref{coinvariants od isoclinic modules}
 \[
 f(H^1_{\Iw}(\bD)_{\Gamma})\subset  H^1_{\Iw}(\W_0)=H^1_c(\W_0).
 \]
From the exact sequence
\[
0\rightarrow H^0(\W_1)\hookrightarrow H^1(\W_0)\rightarrow H^1(\W)
\]
and  the injectivity of $H^1_c(\W_0)\rightarrow H^1(\W)$ we obtain that  
$H^1_c(\W_0)\cap  H^0(\W_1)=\{0\}$ and therefore $f(Z)=0.$
This proves that $Z=\{0\}$ and the injectivity of the map
$H^1_{\Iw}(\bD)_{\Gamma} \rightarrow H^1(\bD_1).$

To complete the proof of Proposition,  by Lemma 1 it is enough to show that
\begin{equation}
\label{H^1_f inter H^1{Iw}}
 H^1_{f,\{p\}}(\Q,V) \cap H^1_{\Iw}(\bD)_{\Gamma}=0 \qquad
 \mathrm{in} \quad H^1(\Qp,V).
\end{equation}
Since $H^1_{\Iw}(\bD)_{\Gamma} \subset H^1(\bD_1)$ 
and the map $\kappa_D$ is injective, \eqref{H^1_f inter H^1{Iw}} 
is equivalent to 
\[
H^1_D(\Q,V) \cap H^1_{c}(\W_0)=0
\]  
and  
the Proposition is proved.
\end{proof}

We want to discuss the relationship of this result with 
the phenomenon of trivial zeros of $p$-adic $L$-functions
studied in \cite{Gr94}, \cite{Ben11}, \cite{Ben14}.
We consider two cases.

\subsubsection*{\bf a) The case $\Dst (V)^{\Ph=1}=0$}   Assume in addition that $\Dst (V)^{\Ph=1}=0.$ Then $\bD_1=\bD$ and 
$\W=\W_0.$  By Proposition \ref{properties of filtration D_i} iii) 
one has 
\[
H^1_D(\Q,V) \simeq H^1(\bD)/H^1_f(\bD) \simeq H^1(\W_0)/H^1_f(\W_0).
\]
Consider the commutative diagram
\begin{displaymath}
\xymatrix{
\CDcris (\W_0) \ar[r]^{\overset{i_{D,f}}{\sim}} &H^1_f(\W_0)\\
H^1_D(\Q,V) \ar[u]^{\rho_{D,f}} \ar[r] \ar[d]_{\rho_{D,c}} 
&H^1(\W_0) \ar[u]_{p_{D,f}}
\ar[d]^{p_{D,c}}
\\
\CDcris (\W_0) \ar[r]^{\overset{i_{D,c}}{\sim}} &H^1_c(\W_0),}
\end{displaymath}
where $p_{D,f}$ and $p_{D,c}$ are projections given by
\eqref{decomposition of W_0} and  
 $\rho_{D,f}$ and $\rho_{D,c}$ are defined as the unique maps making
this diagram commute.

\begin{definition}[ see \cite{Ben14}]  
Let $V$ be a $p$-adic representation which satisfies the conditions
{\bf C1-4a)}.
Assume that $\Dst (V)^{\Ph=1}=0.$ Let $D$ be a regular submodule of 
$\Dst (V).$ The determinant
\begin{equation}
\ell (V,D)= \det \left ( \rho_{D,f} \circ \rho^{-1}_{D,c}\,\mid \,
\CDcris (\W_0) \right )
\nonumber
\end{equation}
will be called  the $\mathscr L$-invariant associated to $V$ and $D$.
\end{definition}

\subsubsection*{\bf b) The case $H^1_f(\Q,V)=H^1_f(\Q,V^*(1))=0$} Assume  that
\begin{equation}
\label{H^1(V)=H^1(V^*(1)}
H^1_f(\Q,V)=H^1_f(\Q,V^*(1))=0
\end{equation}
and that in addition $V$ satisfies 
\newline
\,

{\bf M)} The condition {\bf U)} of Section 3.1 holds  
and in the decomposition 
\eqref{W=Mplus A_0plus A_1}  $\A_0=\A_1=0.$
\newline
\newline
Note that the  typical example we have in mind   is $W_f(k)$ where $W_f$ is the $p$-adic representation associated
to a split multiplicative newform  $f$ of weight $2k$ on $\Gamma_0(N)$  with $p\mid N.$ 

From Proposition~\ref{structure of W} 
one has $H^1_f(\bM_0)=H^1_f(\bM)$ and
$H^1(\bM)/H^1_f(\bM_0) \simeq 
H^1 (\bM_1)/H^1_f(\bM_1).$
Thus $H^1_D(\Q,V) \simeq H^1 (\bM_1)/H^1_f(\bM_1)$
is a $E$-vector space of dimension $e=\mathrm{rank} (\bM_0)=
\mathrm{rank} (\bM_1)$ and 
one has a commutative diagram

\begin{equation}
\label{definition of L-invariant sequence}
\footnotesize
\xymatrix{
0 \ar[r] & H^0(\bM_1)\ar[r]^{\delta_0} & H^1(\bM_0) \ar[r]^{f_1} & H^1(\bM) \ar[r]^{g_1} & H^1(\bM_1) \ar[r]^{\delta_1} & H^2(\bM_0) \ar[r] & 0\\
& & & H^1_D(\Q,V)\ar[u]^{\kappa_D} \ar[ur]^{\bar \kappa_D} &  & &
}
\end{equation}
\normalsize
where the map  $\bar \kappa_D$ is injective.
As $H^0(\bM)=H^2(\bM)=0,$ the upper row is exact. This implies that $\im (g_1)$
is a $E$-vector space of dimension $e.$ Thus 
$
\im (\bar \kappa_D) =\im (g_1)
$
and again one has  a commutative diagram

\[
\xymatrix{
 &\CDcris (\bM_1) \ar[r]^{\overset{i_{D,f}}\sim} & H^1_f(\bM_1)\\
H^1_D(\Q,V) \ar[r]^{\sim} &\im (g_1)\ar[u]^{\rho_{M,f}} \ar[r] \ar[d]_{\rho_{M,c}} &
H^1(\bM_1) \ar[u]_{p_{D,f}}
\ar[d]^{p_{D,c}}
\\
&\CDcris (\bM_1) \ar[r]^{\overset{i_{D,c}}\sim} &H^1_c(\bM_1)}
\]

\begin{definition}[ see \cite{Ben11}]  
Let $V$ be a $p$-adic representation which satisfies the conditions
{\bf C1-3)} and \eqref{H^1(V)=H^1(V^*(1)}.
 Let $D$ be a regular submodule of 
$\Dst (V).$ Assume that  the condition {\bf M)} holds.  The determinant
\begin{equation}
\ell (V,D)= \det \left ( \rho_{D,f} \circ \rho^{-1}_{D,c}\,\mid \,
\CDcris (\bM_1) \right )
\nonumber
\end{equation}
will be called  the $\ell$-invariant associated to $V$ and $D$.
\end{definition}

In particular, in this case the $\ell$-invariant  is local, i.e. depends only 
on the restriction of the representation $V$ on a decomposition group at $p$.
\newline
\,

{\bf Remark.}  In \cite{Ben11} the $\ell$-invariant is defined 
in a slightly more general situation where only $\W_0$ vanishes. 
We do not include it here to avoid additional technical complications
in the formulation of our results.
\newline
\,

The following conjecture was formulated in \cite{Ben11} and \cite{Ben14}.

\begin{conjecture}[{\sc Extra zero conjecture}] 
Let $V$ be a $p$-adic representation which satisfies the conditions {\bf C1-4a)}.
Let $D$ be a regular subspace of $\Dst (V)$ and
let $e=\mathrm{rank} (\W_0).$ 
Then in both cases {\bf a)} and {\bf b)} the $p$-adic $L$-function $L_p(V,D,s)$ has a zero of order $e$ at $s=0$ and
\[
\underset{s\to 0}{\lim}\,
\frac{L_p(V,D,0)}{s^e}\,=\,
\ell (V,D)\,\mathcal{E}^+(V,D)\,
{\Omega_{p}(M,D)}
\frac{L(M,0)}{\Omega_{\infty}(M)}
\]
where $\mathcal{E}^+(V,D)$ is obtained from  $\mathcal{E}(V,D)$ 
by excluding zero factors. 
\end{conjecture}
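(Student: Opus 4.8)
A natural route to this conjecture runs through the Main Conjecture of Section~2.5 together with a descent analysis of the Selmer complex $\SC_{\Iw}(V,D)$, in the spirit of Greenberg--Stevens. The plan is as follows. First I would grant the Main Conjecture, so that $\mathrm{char}_{\CH}H^2_{\Iw}(V,D)=(f_D)$ and $H^1_{\Iw}(V,D)=0$; by Proposition~\ref{Triviality of H^1_Iw} the latter is in any case expected to follow from $\ell(V,D)\neq0$, which is built into the shape of the formula. Then by Proposition~\ref{properties of SC_Iw} the complex $\SC_{\Iw}(V,D)$ has cohomology concentrated in degree~$2$ with $H^2_{\Iw}(V,D)$ a coadmissible torsion $\CH$-module, so $L_p(V,D,s)=f_D(\chi(\gamma_0)^s-1)$ has a well-defined order of vanishing and leading term at $s=0$, and everything is reduced to computing them.

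The second step is to show the order is $e=\mathrm{rank}(\W_0)$. I would descend along $X=\gamma_0-1$, using the Hochschild--Serre sequences of Proposition~\ref{properties of SC_Iw}~iii) and the local structure of Sections~3.1--3.2: because $H^0_S(\Q,V)=H^0_S(\Q,V^*(1))=0$ and $\Dc(V)^{\Ph=1}=0$, the cohomology $H^1(V,D)$ turns out to be $e$-dimensional, controlled entirely by the isoclinic piece $\W_0$ (case~a)) or by $\bM_0\subset\bM$ (case~b)), for which $\CDcris(\W_0)^{\Ph=p^{-1}}=\CDcris(\W_0)$ and $\F^0\CDcris(\W_0)=0$. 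Since $H^1_{\Iw}(V,D)=0$, the Hochschild--Serre sequence gives $H^1(V,D)\simeq H^2_{\Iw}(V,D)^{\Gamma}$, whence $\mathrm{ord}_{s=0}L_p(V,D,s)\geqslant e$; the key local input is Proposition~\ref{coinvariants od isoclinic modules}, which shows $H^1_{\Iw}(\W_0)_{\Gamma}=H^1_c(\Qp,\W_0)$ is exactly $e$-dimensional. On the analytic side this $e$ matches the order of vanishing at $s=0$ of the interpolation factor $\mathcal{E}(V,D)$: the Euler-like factor $\det\bigl((1-p^{-1}\Ph^{-1})/(1-\Ph)\mid D\bigr)$ vanishes to order $e$ because $1-p^{-1}\Ph^{-1}$ annihilates $\CDcris(\W_0)$, and $\mathcal{E}^+(V,D)$ is precisely what remains once these $e$ zeros are removed.

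The hard part will be the leading term: evaluating $\lim_{s\to0}L_p(V,D,0)/s^e$ as a higher derivative. My plan is to use the large exponential map for $(\Ph,\Gamma)$-modules, in the style of Perrin-Riou \cite{PR95} and of \cite{Ben11}, to express $f_D$ — modulo the genuinely non-zero factor $\Omega_p(M,D)\,L(M,0)/\Omega_{\infty}(M)$ and modulo $\mathcal{E}^+(V,D)$ — as a determinant of the composite of Iwasawa cohomology with this exponential. Differentiating $e$ times at $X=0$ then forces the relevant map to factor through $H^1(\Qp,\W_0)$ (resp.\ $H^1(\Qp,\bM_1)$), and the resulting leading coefficient should measure how the image of $H^1(V,D)$ sits in $H^1(\Qp,\W_0)=H^1_f(\Qp,\W_0)\oplus H^1_c(\Qp,\W_0)$ relative to the two trivializations $i_{D,f}$ and $i_{D,c}$ of the summands — precisely $\det(\rho_{D,f}\circ\rho_{D,c}^{-1}\mid\CDcris(\W_0))=\ell(V,D)$, in the notation of Section~3.2. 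This is the Greenberg--Stevens mechanism transposed to $(\Ph,\Gamma)$-modules, with the Bockstein homomorphism of the descent playing the role of a logarithmic derivative; it also pins the order down to exactly $e$ once the leading coefficient is seen to be non-zero.

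Two obstacles stand out. First, the Main Conjecture is open in general, so without it the argument is conditional, or confined to cases where one divisibility is available through Kato's Euler system (cf.\ the remarks following the Main Conjecture); the reverse divisibility in the non-ordinary case is, as noted there, not known. Second, and this is the genuine technical heart, one must rigorously identify the cohomologically defined $\ell$-invariant with the analytic derivative — that is, compare the connecting (Bockstein) map produced by descent along $X$ on $\SC_{\Iw}(V,D)$ with the maps $\rho_{D,f},\rho_{D,c}$ of Section~3.2. This requires an explicit reciprocity law relating the large exponential to the Iwasawa cohomology of $\W_0$ (resp.\ $\bM_1$), going well beyond the formal cohomological properties recalled in Section~1, and it is precisely this $(\Ph,\Gamma)$-module analogue of the Coleman/Perrin-Riou map that is the source of the difficulty.
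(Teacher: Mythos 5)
The statement you are asked about is not a theorem of the paper: it is the \emph{Extra zero conjecture}, stated as an open conjecture, and the paper offers no proof of it. The author explicitly refers to \cite{Ben11}, \cite{Ben13}, \cite{Ben14} for the precise formulation and a survey of \emph{known cases}, and points out that even the non-vanishing of $\ell(V,D)$ is an open problem settled only for Dirichlet motives and elliptic curves. So there is no proof in the paper to compare yours against, and what you have written is a strategy, not a proof.

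Concretely, your argument has gaps at every load-bearing point, and you acknowledge most of them yourself. (1) It is conditional on the Main Conjecture, which is open (only one divisibility is known, via Kato, and only in special cases). (2) It presupposes the existence of $L_p(M,D,s)$ with the stated interpolation property tying it to $L(M,s)/\Omega_\infty(M)$; this existence is itself conjectural, and no amount of descent on $\SC_{\Iw}(V,D)$ can produce the complex $L$-value $L(M,0)$ or the Beilinson--Deligne period $\Omega_\infty(M)$ on the right-hand side — those enter only through the (conjectural) construction of $f_D$. (3) The identification of the Bockstein/descent map with $\det(\rho_{D,f}\circ\rho_{D,c}^{-1})=\ell(V,D)$ — the ``explicit reciprocity law'' you defer — is precisely the analytic heart of the problem; deferring it means the leading-term formula is not established. (4) Even the claim $\mathrm{ord}_{s=0}L_p(V,D,s)=e$ is not obtained: your descent gives at best $\mathrm{ord}\geqslant e$ assuming the Main Conjecture, and equality requires $\ell(V,D)\neq 0$, which is open. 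What the paper actually proves in this direction is only Proposition~\ref{l-invariant and triviality Hiw}: non-vanishing of $\ell(V,D)$ implies $H^1_{\Iw}(V,D)=0$ and hence that $H^2_{\Iw}(V,D)$ is $\CH$-torsion — a consistency check on the conjecture, not a proof of it. Your write-up should be presented as a conditional program, not as a proof.
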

Recall (see Section 0.2) that $\Omega_{p}(M,D)$ denote the determinant 
of the regulator map $r_{V,D}.$ 
Note that $\Omega_{p}(M,D)=1$ if $H^1_f(\Q,V)=H^1_f(\Q,V^*(1))=0.$
We refer to \cite{Ben11}, \cite{Ben13} \cite{Ben14} for precise formulation
of this conjecture  and a survey of known cases. Note that the  non vanishing of $\ell (V,D)$ is 
a difficult open problem which is solved only for Dirichlet  motives 
$\Q (\eta)$ \cite{FG}
and for elliptic curves \cite{SaintEtienne}. The following result shows 
that it is closely related to the expected vanishing of $H^1_{\Iw}(V,D).$

\begin{myproposition} 
\label{l-invariant and triviality Hiw}
Let $V$ be a $p$-adic representation which satisfies the conditions {\bf C1-3)}. Assume that the weak Leopoldt conjecture holds for $V^*(1).$
Then in the both cases {\bf a)} and {\bf b)}
above  the non vanishing of  $\ell (V,D)$ implies that 
$H^1_{\Iw}(V,D)=0$ and therefore $H^2_{\Iw}(V,D)$ in $\CH$-torsion. 
\end{myproposition}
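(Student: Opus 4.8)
The plan is to deduce the three hypotheses of Proposition~\ref{Triviality of H^1_Iw} from the non-vanishing of $\ell(V,D)$ and then to invoke that proposition (the weak Leopoldt conjecture for $V^*(1)$ being part of the hypotheses). In both cases the input from $\ell(V,D)$ is the same: by construction $\ell(V,D)=\det\left(\rho_{D,f}\circ\rho_{D,c}^{-1}\right)$, where $\rho_{D,f},\rho_{D,c}$ are obtained by composing $\kappa_D$ (resp. $\bar\kappa_D$) with the projections $p_{D,f},p_{D,c}$ of the decomposition $H^1=H^1_f\oplus H^1_c$ from Section 1.8 and the isomorphisms $i_{D,f},i_{D,c}$. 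Using Proposition~\ref{properties of filtration D_i} (case a)) and Proposition~\ref{structure of W} (case b)) one first checks that the source of $\rho_{D,c}$ is a complement of $H^1_f$ in the relevant $H^1$ and has the same dimension as $\CDcris(\W_0)$, resp. $\CDcris(\bM_1)$; hence $\rho_{D,c}$ is an isomorphism, and $\ell(V,D)\neq 0$ forces $\rho_{D,f}$ to be an isomorphism too, in particular injective.

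\emph{Case a)} ($\Dst(V)^{\Ph=1}=0$). Here $\bD_1=\bD$ and $\W=\W_0$. Hypothesis a) of Proposition~\ref{Triviality of H^1_Iw}, namely $H^0(\W)=0$, holds because $\F^0\CDst(\W_0)=0$ by the defining properties {\bf D1-3)} of the filtration, so $H^0(\W_0)=\F^0\CDst(\W_0)^{\Ph=1,N=0}=0$ by \eqref{dimension of H_f}. Hypothesis b) is immediate from the decomposition $H^1(\W_0)=H^1_f(\W_0)\oplus H^1_c(\W_0)$, which exhibits $H^1_c(\W_0)$ as a direct summand of $H^1(\W)$. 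For hypothesis c): if $\kappa_D(z)\in H^1_c(\W_0)=\ker(p_{D,f})$, then $i_{D,f}\left(\rho_{D,f}(z)\right)=p_{D,f}\left(\kappa_D(z)\right)=0$, so $\rho_{D,f}(z)=0$, hence $z=0$ by injectivity of $\rho_{D,f}$, and thus $\kappa_D(z)=0$; so $\im(\kappa_D)\cap H^1_c(\W_0)=\{0\}$. Proposition~\ref{Triviality of H^1_Iw} now gives $H^1_{\Iw}(V,D)=0$.

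\emph{Case b)} ($H^1_f(\Q,V)=H^1_f(\Q,V^*(1))=0$ and condition {\bf M)}). Now $\W=\bM$, $\W_0=\bM_0$, $\W_1=\bM_1$, and $\A_0=\A_1=0$ forces $H^0(\W)=H^0(\bM)=0$, which is hypothesis a). Hypothesis c) is automatic: by Proposition~\ref{structure of W} iii) the image of $H^1_c(\bM_0)$ in $H^1(\bM)$ lies in $\im(f_1)=H^1_f(\bM)=\ker(g_1)$, while $\bar\kappa_D=g_1\circ\kappa_D$ is injective, so $\im(\kappa_D)\cap H^1_f(\bM)=\{0\}$ and hence $\im(\kappa_D)\cap H^1_c(\W_0)=\{0\}$ in $H^1(\W)$. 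It remains to check hypothesis b), i.e. $H^1_c(\bM_0)\cap\im(\delta_0)=\{0\}$ inside $H^1(\bM_0)$. Here we pass to orthogonal complements under the perfect local pairing $H^1(\bM_0)\times H^1(\bM_0^*(\chi))\to E$ of Section 1.8: the cup-product formulas there make $H^1_c(\bM_0)$ and $H^1_c(\bM_0^*(\chi))$ orthogonal of complementary dimension, so $H^1_c(\bM_0)^{\perp}=H^1_c(\bM_0^*(\chi))$; and comparing the long exact sequence of $0\to\bM_0\to\bM\to\bM_1\to 0$ with that of its Tate dual $0\to\bM_1^*(\chi)\to\bM^*(\chi)\to\bM_0^*(\chi)\to 0$ shows that $\delta_0$ is adjoint to the connecting map of the dual sequence, so $\im(\delta_0)^{\perp}=\im(g_1')$, the image of the quotient-induced map for the dual sequence. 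Thus $H^1_c(\bM_0)\cap\im(\delta_0)=\{0\}$ is equivalent to $H^1_c(\bM_0^*(\chi))\cap\im(g_1')=\{0\}$; since $\bM_0^*(\chi)$ is the ``$\bM_1$'' piece of the filtration attached to $(V^*(1),D^\perp)$, this is the injectivity of $\rho_{D^\perp,f}$, i.e. $\ell(V^*(1),D^\perp)\neq 0$, which is equivalent to $\ell(V,D)\neq 0$ by the duality between the $\ell$-invariants of a pair and its dual. Proposition~\ref{Triviality of H^1_Iw} then gives $H^1_{\Iw}(V,D)=0$, and $H^2_{\Iw}(V,D)$ is $\CH$-torsion by Proposition~\ref{properties of SC_Iw}.

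The main obstacle is the duality bookkeeping in case b): one must carefully match $\bM_0^*(\chi)$ with the corresponding graded piece of $\Ddagrig(V^*(1))$ relative to $D^\perp$ (using $D^{\perp}_i=\Hom(\Dst(V)/D_{-i},\Dst(\Qp(1)))$ and $\W^*(\chi)\simeq\bM^*(\chi)$ under {\bf M)}), verify the compatibility of the cup products of Section 1.8 with the connecting homomorphisms, and establish the functional-equation-type identity relating $\ell(V,D)$ and $\ell(V^*(1),D^\perp)$; this, together with the check that $\rho_{D,c}$ is genuinely invertible so that $\ell(V,D)$ is a well-defined nonzero scalar rather than $0$ by default, is where the argument needs the most care.
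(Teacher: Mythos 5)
Your case \textbf{a)} is correct and is essentially the paper's argument: there $\W=\W_0$, hypotheses a) and b) of Proposition~\ref{Triviality of H^1_Iw} are automatic, and $\ell(V,D)\neq 0$ (with $\rho_{D,c}$ an isomorphism) gives the injectivity of $\rho_{D,f}$, hence $\im(\kappa_D)\cap H^1_c(\W_0)=\{0\}$. In case \textbf{b)} your reduction is also sound up to a point: hypothesis a) holds since $H^0(\bM)=0$, your observation that hypothesis c) is automatic (because $f_1(H^1_c(\bM_0))\subset \im(f_1)=H^1_f(\bM)=\ker(g_1)$ while $\bar\kappa_D=g_1\circ\kappa_D$ is injective) is a correct sharpening of the paper's phrasing, and hypothesis b) is indeed equivalent to $\im(\delta_0)\cap H^1_c(\bM_0)=\{0\}$ since $\ker(f_1)=\im(\delta_0)$.

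The gap is in the last step of case \textbf{b)}. By definition, $\ell(V,D)\neq 0$ tells you that $\im(g_1)\cap H^1_c(\bM_1)=\{0\}$ inside $H^1(\bM_1)$, whereas what you must prove is $\im(\delta_0)\cap H^1_c(\bM_0)=\{0\}$ inside $H^1(\bM_0)$; these are a priori independent transversality statements in different spaces, and nothing in the exact sequence of Proposition~\ref{structure of W} iii) together with the $f/c$-decompositions forces their equivalence. Your proposed bridge — dualize to $\bM_0^*(\chi)$, identify the resulting condition with the injectivity of $\rho_{D^{\perp},f}$, and then invoke ``$\ell(V^*(1),D^{\perp})\neq 0\iff\ell(V,D)\neq 0$'' — rests on a functional-equation-type identity between the two $\ell$-invariants that you assert but do not prove. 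That identity is not available in this paper and is a substantive result in its own right, of the same depth as the fact the paper actually uses, namely \cite{Ben11}, Proposition 2.2.4: $\ell(V,D)$ can be computed as the slope of $\delta_0\colon H^0(\bM_1)\to H^1(\bM_0)$ with respect to the decomposition $H^1(\bM_0)\simeq H^1_f(\bM_0)\oplus H^1_c(\bM_0)$, which converts $\ell(V,D)\neq 0$ directly into $\im(\delta_0)\cap H^1_c(\bM_0)=\{0\}$. Without supplying a proof of one of these two nontrivial inputs (the slope reinterpretation of $\ell(V,D)$, or the duality of $\ell$-invariants), the argument in case \textbf{b)} does not close.
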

\begin{proof} In the case {\bf a)} $\W_0=\W$ and the statement   follows directly from
Proposition~\ref{Triviality of H^1_Iw} and the definition of the 
$\ell$-invariant. In the case {\bf b)} the diagram 
\eqref{definition of L-invariant sequence} together with 
Proposition~\ref{structure of W} show that   the injectivity 
of $H^1_c(\bM_0) \rightarrow H^1(\bM)$ and the condition 
\eqref{condition for triviality of H^1_Iw} are both equivalent to
the following condition
\[
\im (\delta_0) \cap H^1_c(\bM_0)=\{0\}.
\] 
The statement follows now from \cite{Ben11}, Proposition 2.2.4
where it is proved that $\ell (V,D)$ can be computed as the slope 
of the map $\delta_0\,:\, H^0(\bM_1) \rightarrow H^1(\bM)$ 
with respect to the decomposition $H^1(\bM)\simeq H^1_f(\bM)\oplus H^1_c(\bM).$
\end{proof}

\section{The $p$-adic height pairing}

\subsection{The extended Selmer group}
Let $V$ be a 
 $p$-adic representation of $G_{\Q,S}$
which satisfies the conditions  {\bf C1-3)} of Section 2.1.
Fix a 
submodule $D$ of $\Dst (V)$ 
such that $\Dst (V)=D\oplus \F^0\Dst (V).$ 
We will always  assume that the condition {\bf M)} of Section 3.2 
holds for $D.$ Then by Proposition~\ref{structure of W} 
one has an exact sequence
\[
0\rightarrow \bM_0\rightarrow \bM \rightarrow \bM_1\rightarrow 0
\]
where $\CDcris (\bM_0)=D/D_{-1},$  $\CDcris (\bM_1)=D_1/D$ and
$\mathrm{rank} (\bM_0)=\mathrm{rank} (\bM_1).$

\begin{myproposition} There exists an exact sequence
\begin{equation}
\label{extended selmer}
0\rightarrow H^0(\bM_1)\rightarrow H^1 (V,D)\rightarrow H^1_f(\Q,V)
\rightarrow 0,
\end{equation}
where $H^1_f(\Q,V)$ is the Bloch--Kato's Selmer group and $\dim_E H^0(\bM_1)=\mathrm{rank}(\bM_1)$.
\end{myproposition}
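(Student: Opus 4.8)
The plan is to extract the exact sequence \eqref{extended selmer} from the defining triangle of the Selmer complex $\SC(V,D)$ together with the explicit description of the local conditions at $p$ under the hypothesis {\bf M)}. First I would write out the long exact cohomology sequence of the cone \eqref{definition of SC}. Since by Proposition~\ref{properties of SC_Iw} iii) the complex $\SC(V,D)$ is concentrated in degrees $[0,3]$, and since {\bf C1)} gives $H^0_S(\Q,V)=0$, the sequence begins
\[
0\rightarrow H^1(V,D)\rightarrow H^1_S(\Q,V)\oplus\bigoplus_{v\in S}\R^1\Gamma\bigl(U^{\bullet}_v(V,D)\bigr)\xrightarrow{\ \lambda\ }\bigoplus_{v\in S}H^1(\Q_v,V),
\]
where at $v\neq p$ the term $\R^1\Gamma(U^\bullet_v)=H^1_f(\Q_v,V)$ and at $v=p$ it is $H^1(\Qp,\bD)=H^1(\Q_p,\Ddagrig(V))$-image. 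So $H^1(V,D)$ is the fibre product of $H^1_S(\Q,V)$ and the local conditions over $\bigoplus_v H^1(\Q_v,V)$, i.e. the "Selmer group" cut out by imposing at $v\neq p$ the Bloch--Kato condition and at $p$ the condition "lies in the image of $H^1(\bD)\to H^1(\Qp,V)$".

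Next I would compare this with $H^1_f(\Q,V)$, which is the same fibre product but with $H^1_f(\Qp,V)\subset H^1(\Qp,V)$ as the condition at $p$. The key input is Proposition~\ref{properties of filtration D_i} i) (together with the hypotheses guaranteeing $\bD_1=\bD_1$ is the relevant piece): the canonical map $H^1(\bD)\to H^1(\Qp,V)$ is injective, with image sitting between $H^1_f(\bD)\hookrightarrow H^1_f(\Qp,V)$ and $H^1(\bD_1)\hookrightarrow H^1(\Qp,V)$. Actually, the relevant local condition at $p$ defining $H^1(V,D)$ is precisely $\im(H^1(\bD)\to H^1(\Qp,V))$, and the kernel of $H^1(\bD)\to H^1(\Qp,V)$ governs the discrepancy from $H^1_f$. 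From the exact sequence $0\to\bD\to\bD_1\to\W_1\to 0$ one gets $0\to H^0(\W_1)\to H^1(\bD)\to H^1(\bD_1)$; combined with {\bf M)} (which forces $\W\simeq\bM$, so $H^0(\W_1)=H^0(\bM_1)$ and $\dim_E H^0(\bM_1)=\mathrm{rank}(\bM_1)$ via Proposition~\ref{structure of W}), this identifies the "extra" classes. Concretely: a class in $H^1(V,D)$ is a global class whose localization at $p$ lifts to $H^1(\bD)$; two such lifts differ by $H^0(\W_1)$-worth, but since we are recording the global class plus a chosen local lift, the map $H^1(V,D)\to H^1_f(\Q,V)$ has kernel exactly $H^0(\W_1)=H^0(\bM_1)$ and is surjective because every Bloch--Kato global class localizes at $p$ into $H^1_f(\Qp,V)=H^1_f(\bD)\subseteq\im(H^1(\bD))$ (using {\bf C3)} and the regularity/{\bf M)} hypotheses, which give $H^1_f(\bD)\iso H^1_f(\Qp,V)$ via the commuting exponential diagram in the proof of Proposition~\ref{properties of filtration D_i}).

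Putting this together yields the desired four-term sequence \eqref{extended selmer}. The main obstacle — the step requiring genuine care — is the surjectivity of $H^1(V,D)\to H^1_f(\Q,V)$ and the precise identification of the kernel: one must check that the "defect" at the prime $p$ of the local condition $\im(H^1(\bD))$ relative to $H^1_f(\Qp,V)$ is captured exactly by $H^0(\bM_1)$ and contributes no further constraint globally. This uses that under {\bf M)} we have $H^1_f(\bM)=H^1_f(\bM_0)=\im(f_1)$ (Proposition~\ref{structure of W} ii), iii)), so that $H^1_f(\Qp,V)$ is contained in the image of $H^1(\bD)$, ensuring surjectivity; and the kernel computation is then the connecting-map analysis $H^0(\W_1)\hookrightarrow H^1(\bD)$ above. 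One should also remark that all terms are finite-dimensional $E$-vector spaces by Proposition~\ref{properties of SC_Iw} iii) and the finiteness statements for $(\Ph,\Gamma)$-module cohomology, so no completeness/topological subtlety intervenes. I would present the argument as: (1) write the long exact sequence of $\SC(V,D)$; (2) identify $H^1(V,D)$ as the Selmer group with local condition $\im H^1(\bD)$ at $p$; (3) use Propositions~\ref{properties of filtration D_i} and \ref{structure of W} plus {\bf C1-3)}, {\bf M)} to relate this to $H^1_f(\Q,V)$ and read off kernel $H^0(\bM_1)$ of the expected dimension.
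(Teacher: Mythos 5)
Your proposal is correct and takes essentially the same route as the paper's proof: identify $H^1(V,D)$ as the kernel of the localization map with local condition $\im\bigl(H^1(\bD)\to H^1(\Qp,V)\bigr)$ at $p$, compute that kernel as $H^0(\bM_1)=H^0(\W_1)$ from $0\to\bD\to\bD_1\to\bM_1\to 0$ together with the injectivity of $H^1(\bD_1)\hookrightarrow H^1(\Qp,V)$, and identify the image with $H^1_f(\Qp,V)$ using Proposition~\ref{structure of W}. The only blemishes are presentational: your passing assertion that $H^1(\bD)\to H^1(\Qp,V)$ is injective is false (its kernel is exactly $H^0(\bM_1)$, which is the whole point, and you immediately correct this), and for the map $H^1(V,D)\to H^1_f(\Q,V)$ to be well defined you also need the inclusion $\im\bigl(H^1(\bD)\bigr)\subseteq H^1_f(\Qp,V)$, not only the reverse inclusion you invoke for surjectivity --- both directions follow from the facts $\im(f_1)=H^1_f(\bM)$ and $H^1_f(\bD_1)\simeq H^1_f(\Qp,V)$ that you already cite.
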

\begin{proof} By the definition of $\RG (V,D)$ the  group $ H^1 (V,D)$
is the kernel of the map
\[
H^1_S(\Q,V)\oplus \left (\underset{v\in S-\{p\}}\bigoplus H^1_f(\Q_v,V)
\right ) \oplus H^1(\bD) \rightarrow \underset{v\in S}\bigoplus H^1(\Q_v,V).
\] 
The Proposition follows directly from this description of $ H^1 (V,D)$ 
together with the following facts

a) $H^0(\bM_1)=\ker (H^1(\bD) \rightarrow H^1(\Qp,V));$

b) $\im (H^1(\bD) \rightarrow H^1(\Qp,V))=H^1_f(\Qp,V).$

The proof of  a) and b) can be extracted from the construction
of the $\ell$-invariant in \cite{Ben11}, Section 2.2.1 
but we recall the arguments for reader's convenience. 
Consider the  exact sequence 
\[
0\rightarrow \bD_1\rightarrow \Ddagrig (V) \rightarrow \gr_2\Ddagrig (V)\rightarrow 0.
\]
In the proof of Proposition~\ref{properties of filtration D_i}
we saw that $H^0\left (\gr_2\Ddagrig (V)\right )=0$ and 
therefore the map $H^1(\bD_1)\rightarrow H^1(\Qp,V)$ is injective.
Taking the long cohomology sequence associated to
\[
0\rightarrow \bD \rightarrow \bD_1 \rightarrow \bM_1\rightarrow 0
\]
we obtain 
\[
0\rightarrow H^0(\bM_1)\rightarrow H^1(\bD) \rightarrow H^1(\bD_1)
\]
and a) is proved.

Using \eqref{exact sequence with H_f} it is not
difficult to show that  
and  $H^1_f\left (\gr_2\Ddagrig (V)\right )=0$ and therefore 
$H^1_f(\bD_1)\simeq H^1_f(\Qp,V).$ Consider the exact sequence 
\[
0\rightarrow \bD_{-1}\rightarrow \bD_1\rightarrow \bM \rightarrow 0.
\]
Since $H^0(\bM)=0$ and $H^1_f(\bD_{-1})=H^1(\bD_{-1}),$ 
by \cite{ben11}, Corollary 1.4.6  one has  an exact sequence 
\[
0\rightarrow H^1(\bD_{-1})\rightarrow H^1_f(\bD_1)\rightarrow H^1_f(\bM)
\rightarrow 0.
\]
On the other hand, from Proposition~\ref{structure of W} 
\[\im (H^1(\bM_0)\rightarrow H^1(\bM))=H^1_f(\bM).
\]
Thus $\im (H^1(\bD)\rightarrow H^1(\bD_1))=H^1_f(\bD_1)$ and
b) is proved.

\end{proof}

\begin{definition} We will call $H^1(V,D)$ the extended Selmer group 
associated to $D.$
\end{definition}

\noindent
{\bf Remarks} 1) Extended Selmer groups associated to ordinary local conditions 
were studied in \cite{Ne92}, \cite{NeSC}.

2) If $\bM=0$  the group $H^1(V,D)$ coincides with the Bloch--Kato's 
Selmer group. One expects that if $D$ is regular, the appearance  of $H^0(\bM_1)$ 
in the short exact sequence \eqref{extended selmer} reflects 
the presence of extra-zeros of the $p$-adic $L$-function $L_p(V,D,s).$  
We study this question  in \cite{Ben14b}.

\subsection{The $p$-adic height pairing}
In \cite{NeSC}, Nekov\'a\v r found a new construction 
of the $p$-adic height pairing on extended 
Selmer groups  defined by Greenberg's local conditions. 
In this section  we follow his approach {\it verbatim}
working with  local conditions defined by $(\Ph,N)$-submodules. 
We keep  notation and conventions of Section 4.1. 
Set $A=\CH/(X^2).$ Then also $A=E[X]/(X^2)$ and one has an exact sequence
\begin{equation}
\label{Bochstein for A}
0\rightarrow E\xrightarrow{X} A\rightarrow E\rightarrow 0.
\end{equation}
Set $V_A=V\otimes_EA^{\iota}$ and $D_A=D\otimes_EA^{\iota}$ and consider
the complex $\RG(V_A,D_A).$ The sequence \eqref{Bochstein for A}
induces    a distinguished triangle
\[
\RG(V_A,D) \rightarrow \RG(V_A,D_A) \rightarrow \RG(V,D)
\]
which gives the coboundary map 
\[H^1(V,D)\xrightarrow{\beta} H^2(V,D).
\]
Let $\bD^{\perp}\subset \Dst (V^*(1))$  denote the dual 
 submodule.
 
\begin{definition} Let $V$ be a $p$-adic representation which satisfies
the conditions {\bf C1-3)} and let $D$ be a $(\Ph,N)$-submodule of 
$\Dst (V)$ such that  the condition {\bf M)} holds for $D$. 
We define the $p$-adic height pairing associated 
to $D$ as the bilinear map  
\begin{equation}
\label{definition height}
\left < \,,\,\right >_{V,D}\,\,:\,\,H^1(V,D) \times H^1(V^*(1),D^{\perp})
\rightarrow E 
\end{equation}
given by
$
\left < x,y \right >_{V,D}=\beta (x) \cup y
$
where $\cup \,:\, H^2(V,D)\times H^1(V^*(1),D^{\perp})\rightarrow E$ 
denotes  the duality defined in Proposition~\ref{properties of SC_Iw}.
\end{definition}

\noindent
{\bf Remarks} 1) In the ordinary setting it is possible to work 
over $\Zp$ and Nekov\'a\v r's descent machinery gives
very general formulas of the Birch and Swinnerton-Dyer type. 
In the general  setting we are forced to work over $\Qp$ if we want to use 
the theory of $(\Ph,\Gamma)$-modules.  
  
2) The pairing \eqref{definition height} will be studied in detail in \cite{Ben14b}. In particular we  compare our construction to the 
$p$-adic height pairing constructed by Nekov\'a\v r in \cite{Ne92}
and  relate it to universal norms. 
\newline
\,

\indent
The following result can be seen as an analog of Proposition~\ref{l-invariant and triviality Hiw} in the weight $-1$ case.

\begin{myproposition} Let $V$ be a $p$-adic representation which satisfies 
the conditions {\bf C1-3)} and does not contain 
subrepresentations of the form $\Qp(m).$ Let $D$ be a $(\Ph,N)$-submodule of $\Dst (V)$ such that the condition {\bf M)} holds for $D.$ 
Assume that 

a) The weak Leopoldt conjecture 
holds for $V^*(1).$

b) The pairing 
$\left < \,,\,\right >_{V,D}$
is non degenerate.

Then $H^1_{\Iw}(V,D)=0$ and therefore  $H^2_{\Iw}(V,D)$
is $\CH$-torsion.

\end{myproposition}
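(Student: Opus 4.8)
The plan is to follow Nekov\'a\v r's descent formalism (\cite{NeSC}, Chapters~0, 8 and 11), transposed to $(\Ph,\Gamma)$-module local conditions as in \cite{Po13} and \cite{Ben14}, combined with the Poitou--Tate description of $H^1_{\Iw}(V,D)$ already used in the proof of Proposition~\ref{Triviality of H^1_Iw} and with the structure theory of coadmissible $\CH$-modules. First I would reduce to the vanishing of coinvariants. Since the weak Leopoldt conjecture holds for $V^*(1),$ the argument of Proposition~\ref{Triviality of H^1_Iw} applies without change and identifies
\[
H^1_{\Iw}(V,D)=\bigl(H^1_{\Iw,S}(\Q,T)\otimes_{\Lambda}\CH\bigr)\cap H^1_{\Iw}(\bD)
\qquad\text{inside}\ H^1_{\Iw}(\Qp,T)\otimes_{\Lambda}\CH,
\]
where $\bD$ is the $(\Ph,\Gamma)$-submodule of $\Ddagrig(V)$ attached to $D.$ Because $V$ contains no subrepresentation $\Qp(m),$ one has $T^{H_{\Q,S}}=0,$ so $H^1_{\Iw,S}(\Q,T)\otimes_{\Lambda}\CH$ is a free $\CH$-module; hence $H^1_{\Iw}(V,D)$ is torsion-free coadmissible, and therefore finitely generated over $\CH$ by Proposition~\ref{properties coadmissible modules}~i). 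A nonzero finitely generated torsion-free $\CH$-module has positive rank, so its fibre at the origin of $B(0,1)$ — which is exactly $H^1_{\Iw}(V,D)\otimes_{\CH}E=H^1_{\Iw}(V,D)_{\Gamma}$ — would be nonzero. Thus it suffices to prove $H^1_{\Iw}(V,D)_{\Gamma}=0.$

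Next I would run the Bockstein computation. By Proposition~\ref{properties of SC_Iw}~iii) there is an injection $H^1_{\Iw}(V,D)_{\Gamma}\hookrightarrow H^1(V,D).$ With $A=\CH/(X^2)$ and the notation of Section~4.2, the exact sequence \eqref{Bochstein for A} defines the Bockstein $\beta\colon H^1(V,D)\to H^2(V,D).$ Using the base change statement of Theorem~\ref{cohomology in families}~ii) for the local factor at $p$ (and its global counterpart) one obtains a derived base change isomorphism $\SC_{\Iw}(V,D)\otimes^{\mathbf L}_{\CH}A\simeq\SC(V_A,D_A),$ compatible modulo $X$ with the isomorphism $\SC_{\Iw}(V,D)\otimes^{\mathbf L}_{\CH}E\simeq\SC(V,D)$ of Proposition~\ref{properties of SC_Iw}~iii). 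This forces every class in the image of $H^1(\SC_{\Iw}(V,D))=H^1_{\Iw}(V,D)\to H^1(V,D)$ to lift to $H^1(V_A,D_A),$ and hence, by the long exact sequence of the triangle $\SC(V,D)\xrightarrow{X}\SC(V_A,D_A)\to\SC(V,D)\xrightarrow{\beta}\SC(V,D)[1],$ to lie in $\ker\beta.$ In other words, the image of $H^1_{\Iw}(V,D)_{\Gamma}$ in $H^1(V,D)$ is contained in $\ker\beta$: universal norms are annihilated by the Bockstein.

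To conclude I would invoke the duality. By Proposition~\ref{properties of SC_Iw}~iv) the cup product $H^2(V,D)\times H^1(V^*(1),D^{\perp})\to E$ is a perfect pairing, so for $x\in H^1(V,D)$ one has $\left<x,y\right>_{V,D}=\beta(x)\cup y=0$ for all $y$ precisely when $\beta(x)=0.$ Hence hypothesis b), the non-degeneracy of $\left<\,,\,\right>_{V,D},$ gives $\ker\beta=0,$ and together with the previous step $H^1_{\Iw}(V,D)_{\Gamma}=0,$ whence $H^1_{\Iw}(V,D)=0$ by the reduction above. Finally Proposition~\ref{properties of SC_Iw}~ii) gives $\mathrm{rank}_{\CH}H^2_{\Iw}(V,D)=\mathrm{rank}_{\CH}H^1_{\Iw}(V,D)=0,$ so $H^2_{\Iw}(V,D)$ is $\CH$-torsion.

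The hard part will be the middle step: establishing the derived base change $\SC_{\Iw}(V,D)\otimes^{\mathbf L}_{\CH}A\simeq\SC(V_A,D_A)$ and checking that all the transition maps match up, so that classes coming from Iwasawa cohomology genuinely lie in $\ker\beta.$ This is precisely where one must reproduce Nekov\'a\v r's descent machinery, now for $(\Ph,\Gamma)$-module local conditions, and where $\beta$ gets identified with the derivative underlying the $p$-adic height pairing; once this identification is in place, the remaining arguments (torsion-freeness, coadmissibility, the duality) are formal consequences of the results recalled in Sections~1 and~2.
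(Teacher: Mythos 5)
Your proposal is correct and follows essentially the same route as the paper: non-degeneracy of $\left<\,,\,\right>_{V,D}$ forces the Bockstein $\beta$ to be injective, the comparison of $\CH\xrightarrow{X}\CH\to E$ with $E\xrightarrow{X}A\to E$ shows the image of $H^1_{\Iw}(V,D)_{\Gamma}$ in $H^1(V,D)$ lies in $\ker\beta=0$, and the weak Leopoldt hypothesis together with $V^{H_{\Q,S}}=0$ identifies $H^1_{\Iw}(V,D)$ with $\bigl(H^1_{\Iw,S}(\Q,T)\otimes_{\Lambda}\CH\bigr)\cap H^1_{\Iw}(\bD)$, which is $\CH$-free, so vanishing coinvariants give vanishing of the module. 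The only difference is cosmetic (you perform the freeness reduction before the Bockstein step, the paper after), and the derived base change you flag as the "hard part" is exactly the commutative diagram the paper writes down at the same point.
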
 
\begin{proof}
If the $p$-adic height  pairing is non degenerate, the map 
$\beta$ is injective. The diagram
\[
\xymatrix{
0\ar[r] &\CH \ar[r]^{X} \ar[d] &\CH \ar[r] \ar[d] &E \ar[r] \ar[d] & 0\\
0\ar[r] &E \ar[r]^{X}  &A \ar[r]  &E \ar[r]  & 0}
\] 
gives rise to a commutative diagram with exact rows
\[
\xymatrix{
0 \ar[r] &H^1_{\Iw}(V,D)_{\Gamma} \ar[r] \ar[d] &H^1(V,D) \ar[d]^{=} \ar[r] &H^2_{\Iw}(V,D)^{\Gamma} \ar[d]\\
 &H^1(V_A,D_A) \ar[r] &H^1(V,D) \ar[r]^{\beta} &H^2(V,D).}
\]
Thus $H^1_{\Iw}(V,D)_{\Gamma}=0.$ On the other hand,
as the weak Leopoldt conjecture holds for $V^*(1)$ and $V^{H_{\Q,S}}=0,$
the $\La [1/p]$-module $H^1_{\Iw,S}(\Q,V)$ is free and injects into
$H^1_{\Iw}(\Qp,V).$ Therefore, as in the proof of Proposition~\ref{Triviality of H^1_Iw} one has 
\[
H^1_{\Iw}(V,D)=\left (H^1_{\Iw,S}(\Q,T)\otimes_{\Lambda}\CH \right )\cap H^1_{\Iw}(\bD).
\]
 This implies that $H^1_{\Iw}(V,D)$ is $\CH$-free and the vanishing of 
$H^1_{\Iw}(V,D)_{\Gamma}$ gives $H^1_{\Iw}(V,D)=0.$ 
\end{proof}

\bibliographystyle{style}

\end{document}